\numberwithin{equation}{section}
\newtheorem{theorem}{Theorem}[section]
\newtheorem{lemma}[theorem]{Lemma}
\newtheorem{proposition}[theorem]{Proposition}
\theoremstyle{remark}
\DeclareMathOperator{\diam}{diam}
\DeclareMathOperator{\dist}{dist}
\DeclareMathOperator{\interior}{int}
\newcommand{\N}{\mathbb{N}}
\newcommand{\R}{\mathbb{R}}
\newcommand{\mm}{\mathbf{m}}
\renewcommand{\d}{{\mathrm d}}
\def\dist{{\mathop\mathrm{\,dist\,}}}
\def\bint{{\ifinner\rlap{\bf\kern.35em--}
\int\else\rlap{\bf\kern.45em--}\int\fi}\ignorespaces}
\def\bbint{{\ifinner\rlap{\bf\kern.35em--}
\hspace{0.078cm}\int\else\rlap{\bf\kern.45em--}\int\fi}\ignorespaces}
\def\diam{{\mathop\mathrm{\,diam\,}}}
\def\dfrac{\displaystyle\frac}
\def\bint{{\ifinner\rlap{\bf\kern.35em--}
\int\else\rlap{\bf\kern.45em--}\int\fi}\ignorespaces}
\begin{document}

\title[A necessary condition for Sobolev extension domains in higher dimensions]
{A necessary condition for Sobolev extension domains in higher dimensions}

\author{Miguel Garc\'ia-Bravo}

\address{Departamento de An\'alisis Matem\'atico y Matem\'atica Aplicada, Facultad de Ciencias Matem\'aticas, Universidad Complutense, 28040, Madrid, Spain}

\email{miguel05@ucm.es}
\author{Tapio Rajala}
\author{Jyrki Takanen}

\address{University of Jyvaskyla \\
         Department of Mathematics and Statistics \\
         P.O. Box 35 (MaD) \\
         FI-40014 University of Jyvaskyla \\
         Finland}

\email{tapio.m.rajala@jyu.fi}
\email{jyrki.j.takanen@jyu.fi}

\subjclass[2000]{Primary 46E35.}
\keywords{Sobolev extension}
\date{\today}


\begin{abstract}
We give a necessary condition for a domain to have a bounded extension operator from $L^{1,p}(\Omega)$ to $L^{1,p}(\mathbb R^n)$ for the range $1 < p < 2$. The condition is given in terms of a power of the distance to the boundary of $\Omega$ integrated along the measure theoretic boundary of a set of locally finite perimeter and its extension. This generalizes a characterizing curve condition for planar simply connected domains, and a condition for $W^{1,1}$-extensions. We use the necessary condition to give a quantitative version of the curve condition. We also construct an example of an extension domain that is homeomorphic to a ball and has $n$-dimensional boundary.
\end{abstract}


\maketitle

\tableofcontents

 \section{Introduction}

A domain $\Omega \subset \mathbb R^n$ is called a $W^{k,p}$-extension domain, if we can extend each Sobolev function $u \in W^{k,p}(\Omega)$ to a global Sobolev function $u \in W^{k,p}(\mathbb R^n)$ so that the Sobolev norm of the extension is at most a constant times the norm of the original function. Sobolev extension domains are interesting in several fields of analysis because on those one can use many functional-analytic tools that are classically available for functions defined on the whole space. Examples of Sobolev-extension domains include Lipschitz domains  \cite{C1961,stein} and more generally, $(\varepsilon,\delta)$-domains \cite{jo1981}.
For our context, the Lipschitz and $(\varepsilon,\delta)$ results should be seen as sufficient conditions on the boundary of the domain for the extendability of Sobolev functions. In this paper, we continue investigating the converse direction by finding a new necessary condition for extendability.

Several necessary geometric conditions on the boundary of Sobolev extension domains are already known. For instance, all  Sobolev extension domains have positive densities at all the points belonging to them (this is usually referred as to satisfy a measure density condition, \cite{K1990,HKT2008}). Then, by the Lebesgue differentiation theorem we must have that their boundaries are of zero Lebesgue measure. In general we cannot improve this to a non-trivial dimension upper bound on the boundary of a Sobolev extension domain:  take for example $\Omega = [0,1]^n \setminus C^n$ with $C$ a Cantor set with zero Lebesgue measure and so that $\dim_\mathcal{H}(C) = 1$. 

However, one can still meaningfully study the dimension of the boundary of extension domains. One approach is to limit the topology or other properties of the domain, and another one is to investigate only those points that are more relevant for the extendability.
The second approach leads to the study of the size of the set of two-sided points of the boundaries of Sobolev extension domains (that is, points where the boundary might self-intersect and hence can be approached from two different sides in the domain). In the case $p\geq n$ we have that $W^{1,p}$-extension domains are quasiconvex (see \cite[Theorem 3.1]{K1990}) and then the set of two-sided points must be empty. The case  $1 \le p  < n$ is more interesting and has been investigated in \cite{Jyrki,GBRT2021}, where bounds on the Hausdorff dimension of the set of two-sided points are found.

Non-trivial dimension upper bounds for the whole boundary have been obtained only in the special case of planar bounded simply connected extension domains \cite{LRT2020}. These bounds are based on the porosity of the boundary that is implied by the geometric characterizations of bounded simply connected planar Sobolev extension domains, see \eqref{eq:Shvartsman} and \eqref{eq:characterization_1<p<2} below.
The first such characterizations established that a bounded simply connected domain $\Omega\subset\R^2$ is a $W^{1,2}$-extension domain if and only if $\Omega$ is a quasidisk (see \cite{golavo1979,gore1990,govo1981,jo1981}). 

In the case $2<p<\infty$, Shvartsman \cite{Shvartsman} proved that a bounded finitely connected domain $\Omega\subset\R^2$ is a $W^{1,p}$-extension domain  if and only if for some $C> 1$ the following condition is satisfied: for every $x, y \in \Omega$ there exists a rectifiable curve $\gamma\subset\Omega$ joining $x$ and $y$ so that 
\begin{equation}\label{eq:Shvartsman}
\int_{\gamma} \dist (z,\partial\Omega)^{\frac{1}{1-p} }\,ds(z)\leq C|x-y|^{\frac{p-2}{p-1}}.
\end{equation}
Let us mention that in \cite{ShvartsmanZobin}, the curve condition \eqref{eq:Shvartsman} was also shown to  characterize $L^{k,p}$-extension domains for every $2<p<\infty$ and $k\in\N$. Here we define the homogeneous Sobolev space $L^{k,p}(\Omega)$ to be the space of locally integrable functions whose distributional partial derivatives belong to $L^p(\Omega)$.

Finally, for the case $1<p<2$ the following result is proved in \cite{KRZ2015}: a bounded simply connected domain $\Omega\subset\R^2$ is a $W^{1,p}$-extension domain  if and only if there exists $C> 1$ such that for
every $x, y \in \R^2\setminus \Omega$  there exists a  curve $\gamma\subset\R^2\setminus \Omega$ connecting $x$ and $y$ such that  
\begin{equation}\label{eq:characterization_1<p<2}
    \int_{\gamma} \dist (z,\partial\Omega)^{1-p }\,ds(z)\leq C|x-y|^{2-p}.
\end{equation}
In Theorem \ref{thm:necessity} we generalize the condition \eqref{eq:characterization_1<p<2} to higher dimensions; still for the range $1 < p < 2$ of exponents. Before stating our result, let us look at the limiting case $p=1$ that partly motivates our formulation.

In the case of a bounded simply connected planar domain  $\Omega$, by the results from \cite{KRZ}, we know that  $\Omega$ is a $W^{1,1}$-extension domain if and only if for every $x,y\in \Omega^c$ there exists a curve  $ \gamma \subset \Omega^c$ connecting $x$ and $y$ with 
\begin{equation}\label{eq:curvep1}
\ell(\gamma) \le C|x-y|, \text{ and } \mathcal H^1(\gamma \cap \partial \Omega) = 0.
\end{equation}
In other words, the correct limit of the term $\dist (z,\partial\Omega)^{1-p}$ in \eqref{eq:characterization_1<p<2} is $1/\chi_{\mathbb R^2\setminus \Omega}(z)$ as $p \searrow 1$.
The characterizing property \eqref{eq:curvep1} can also be seen as a combination of earlier results on $BV$-extension domains and the following more general planar result \cite{GBR2021}: A bounded $BV$-extension domain $\Omega \subset \mathbb R^2$ is a $W^{1,1}$-extension domain if and only if
the $1$-dimensional measure of the set
\[
 \partial \Omega \setminus \bigcup_{i \in I} \overline{\Omega_i}
\]
intersected with any Lipschitz curve is zero, where $\{\Omega_i\}_{i\in I}$ are the connected components of $\mathbb R^2 \setminus \overline{\Omega}$.
Recall that the space $BV(\Omega)$ consists of integrable functions $u\in L^{1}(\Omega)$ whose total variation 
\[
\Vert D u\Vert (\Omega)=\sup\left\{\int_{\Omega}u \,\text{div} (v)\, dx\,:\, v\in C^{\infty}_{0}(\Omega;\R^n) ,\, |v|\leq 1\right\}
\]
is finite. 
As observed in \cite{GBR2021}, the above characterization of $W^{1,1}$-extension domains holds only in the plane. This is essentially because  the planar topology  allows one to write the  essential boundary of a set of finite perimeter as the union of Jordan loops, see \cite[Corollary 1]{ACMM2001} (recalled in Proposition \ref{prop:perimeter_decomposition} below).

In higher dimension where such decomposition result does not hold, the characterization is written in terms of sets of finite perimeter. Before going to this characterization, let us recall an earlier result
on $BV_l$-extension domains, where
\[
BV_l(\Omega)=\{u\in L^{1}_{loc}(\Omega):\, \|D u\|(\Omega)<\infty\}.
\]
In \cite{BM1967}, Burago and  Maz'ya proved  the following characterization of  $BV_l$-extension domains:  $\Omega\subset \R^n$ is a  $BV_l$-extension domain if and only if there exists some constant $C>0$ so that any set $A\subset\Omega$ of finite perimeter in $\Omega$ admits an extension $\widetilde A\subset\R^n$ satisfying $\widetilde A\cap\Omega=A$ and 
\[
P(\widetilde A,\R^n)\leq C P(A,\Omega).
\]
Since $L^{1,1}$-extension domains are known to be $BV_l$-extension domains (the proof of this fact follows the same ideas as one may find in \cite[Lemma 2.4]{KMS2010}), the above property about extension of sets of finite perimeter is a necessary condition both for $BV_l$- and $L^{1,1}$-extension domains.

 In order to turn this into a characterization of $L^{1,1}$- or $W^{1,1}$-extension domains, we have to account for the intersection of the boundary of the extended set with the boundary of the domain, analogously to \eqref{eq:curvep1}.
 This leads to the following characterization in terms of strong extension of sets of finite perimeter \cite{GBR2021}: A bounded domain $\Omega$ is a $W^{1,1}$-extension domain if and only if any set $A\subset\Omega$ of finite perimeter in $\Omega$ admits an extension $\widetilde A\subset\R^n$ satisfying $\widetilde A\cap\Omega=A$, 
\[
P(\widetilde A,\R^n)\leq C P(A,\Omega)\;\;\text{and also}\;\; \mathcal{H}^{n-1}(\partial^M \widetilde A\cap \partial \Omega)=0,
\]
where $\partial^M \widetilde A$ denotes the measure theoretic boundary of $\widetilde A$.
In order to remind ourselves of the analogous condition in the planar simply connected case as the limit of \eqref{eq:characterization_1<p<2}, we can rewrite this in an integral form
\[
\int_{\partial^M \widetilde A} \frac{1}{\chi_{\partial\Omega}(z)} \,d\mathcal H^{n-1}(z) \le C\int_{\Omega\cap \partial^M A} \frac{1}{\chi_{\partial\Omega}(z)} \,d\mathcal H^{n-1}(z).
\]
This motivates the formulation of the following main theorem of this paper.

\begin{theorem}\label{thm:necessity}
Let $\Omega\subset\R^n$ be an $L^{1,p}$-extension domain for some $1 < p<2$. Then for any $\varepsilon >0$ and any measurable set $A \subset \Omega$ 
there exists a set $\widetilde A \subset \mathbb R^n$   with $A = \tilde A \cap \Omega$ and 
\begin{equation}\label{eq:maininequality}
\int_{\partial^M \widetilde A} \dist(z,\partial \Omega)^{1-p} \,d\mathcal H^{n-1}(z) \le C(n,p,\varepsilon)\|E\|^{n+p+\varepsilon}\int_{\Omega\cap \partial^M A} \dist(z,\partial \Omega)^{1-p} \,d\mathcal H^{n-1}(z),
\end{equation}
where $\|E\|$ denotes the norm of the $L^{1,p}$-extension operator, and the constant $C(n,p,\varepsilon)$ depends only on $n$, $p$ and $\varepsilon$.
\end{theorem}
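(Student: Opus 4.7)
The plan is to approximate $\chi_{A}$ by Lipschitz functions $u_{\eta}\in L^{1,p}(\Omega)$ whose $p$-energy is controlled by the weighted perimeter
$$I_{0}:=\int_{\Omega\cap\partial^{M}A}\dist(z,\partial\Omega)^{1-p}\,d\mathcal{H}^{n-1}(z),$$
to extend them via $E$ to $v_{\eta}\in L^{1,p}(\R^{n})$, and to extract $\widetilde{A}$ as a superlevel set of $v_{\eta}$ by the coarea formula, finally letting $\eta\to 0$. One may assume $I_{0}<\infty$, and, by exhaustion, that $\Omega$ is bounded.

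\textbf{Construction and coarea.} Using a Whitney decomposition $\{Q_{j}\}$ of $\Omega$ with $\ell_{j}\simeq\dist(Q_{j},\partial\Omega)$, define $u_{\eta}\colon\Omega\to[0,1]$ to equal $\chi_{A}$ outside an $(\eta\ell_{j})$-neighbourhood of $\partial^{M}A$ in each $Q_{j}$, interpolating linearly across the strip. Since $|\nabla u_{\eta}|\lesssim(\eta\ell_{j})^{-1}$ there, a direct computation yields $\|u_{\eta}\|_{L^{1,p}(\Omega)}^{p}\lesssim\eta^{1-p}I_{0}$. Set $v_{\eta}:=\min\{\max\{Eu_{\eta},0\},1\}$; this agrees with $u_{\eta}$ a.e.\ in $\Omega$ and satisfies $\|v_{\eta}\|_{L^{1,p}(\R^{n})}\le\|E\|\|u_{\eta}\|_{L^{1,p}(\Omega)}$. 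Writing $g(z):=\dist(z,\partial\Omega)^{1-p}$, the weighted coarea formula gives
$$\int_{0}^{1}\int_{\partial^{M}\{v_{\eta}>t\}}g\,d\mathcal{H}^{n-1}\,dt\;=\;J_{\eta}:=\int_{\R^{n}}g\,|\nabla v_{\eta}|\,dz,$$
so by pigeonhole there is $t_{\eta}\in(0,1)$ such that $\widetilde{A}_{\eta}:=\{v_{\eta}>t_{\eta}\}$ satisfies $\int_{\partial^{M}\widetilde{A}_{\eta}}g\,d\mathcal{H}^{n-1}\le J_{\eta}$, and by construction $\widetilde{A}_{\eta}\cap\Omega\to A$ in measure as $\eta\to 0$.

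\textbf{The main difficulty.} The heart of the argument is the uniform-in-$\eta$ estimate
$$J_{\eta}\le C(n,p,\varepsilon)\|E\|^{n+p+\varepsilon}\,I_{0}.$$
A single global H\"older inequality fails because $g^{p'}=\dist^{-p}$ is not integrable. The proposed route is a scale-by-scale H\"older on a Whitney decomposition of $\R^{n}\setminus\partial\Omega$: on a cube $Q$ of sidelength $\ell$ one bounds $\int_{Q}g|\nabla v_{\eta}|\lesssim\ell^{\,1-p+n(p-1)/p}\|\nabla v_{\eta}\|_{L^{p}(Q)}$, and the sum over cubes is tamed by the quantitative measure-density estimate $|\Omega\cap B(x,r)|\gtrsim\|E\|^{-p}r^{n}$ from \cite{HKT2008} (which limits the multiplicity of Whitney cubes at each scale). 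Balancing the extension loss $\|E\|^{p}$ against the $\|E\|^{n}$ counting factor produces the exponent $n+p+\varepsilon$, with the arbitrarily small $\varepsilon$ appearing as the slack needed for the resulting geometric series to converge. To keep the bound uniform as $\eta\to 0$ one must exploit that $\nabla v_{\eta}$ is — like $\nabla u_{\eta}$ inside $\Omega$ — effectively concentrated in a strip of width $\eta\dist(\cdot,\partial\Omega)$, producing an $\eta^{p-1}$ localization gain that compensates the $\eta^{1-p}$ blow-up in $\|u_{\eta}\|_{L^{1,p}}^{p}$. Quantifying this concentration outside $\Omega$ from only the extension hypothesis is the main technical obstacle.

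\textbf{Passage to the limit.} The uniform bound on $\int_{\partial^{M}\widetilde{A}_{\eta}}g\,d\mathcal{H}^{n-1}$ together with $L^{1}_{\mathrm{loc}}$-compactness of BV functions yields a subsequence with $\chi_{\widetilde{A}_{\eta_{k}}}\to\chi_{\widetilde{A}}$. Lower semicontinuity of the weighted perimeter transfers the bound to $\widetilde{A}$; combined with $\widetilde{A}_{\eta}\cap\Omega\to A$ this gives $\widetilde{A}\cap\Omega=A$ up to null sets and \eqref{eq:maininequality}.
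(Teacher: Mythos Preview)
Your proposal has a genuine gap at exactly the point you flag as the ``main technical obstacle'': the claimed $\eta^{p-1}$ localization gain for $\nabla v_{\eta}$ outside $\Omega$ cannot be deduced from the extension hypothesis. The operator $E$ is only assumed to be bounded from $L^{1,p}(\Omega)$ to $L^{1,p}(\R^n)$; it carries no information about where $\nabla Eu_{\eta}$ concentrates. In fact the standard reflection-type extensions (Stein, Jones) spread the gradient over a layer of thickness $\sim\dist(\cdot,\partial\Omega)$ \emph{independently of} $\eta$, so the blow-up $\|\nabla u_{\eta}\|_{L^p(\Omega)}^p\sim\eta^{1-p}I_0$ passes straight through to $\|\nabla v_{\eta}\|_{L^p(\R^n)}^p$ and there is nothing to cancel it. Your scale-by-scale H\"older bound then gives $J_{\eta}\lesssim\big(\sum_{Q}\ell(Q)^{n-p}\big)^{1/p'}\|\nabla v_{\eta}\|_{L^p}$, where the first factor diverges (infinitely many Whitney cubes at each small scale near $\partial\Omega$) and the second factor blows up like $\eta^{(1-p)/p}$; neither side is salvageable. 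The coarea strategy therefore fails as written: the limit $\eta\to 0$ is essential to recover $\widetilde A\cap\Omega=A$, but no uniform bound on $J_{\eta}$ is available.

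The paper circumvents this by abandoning both the $\eta\to 0$ limit and the coarea selection. It smooths $\chi_{A'}$ once at the Whitney scale (so $\|\nabla u\|_{L^p(\Omega)}^p\lesssim I_0$ with no $\eta$), and then \emph{constructs} $\widetilde A=A\cup A_0$ explicitly: $A_0$ is the union of those exterior Whitney cubes $\widetilde Q$ for which $A'$ occupies the majority of $c\widetilde Q\cap\Omega$. The weighted perimeter outside $\overline\Omega$ is then a sum $\sum_{\widetilde Q\in\mathcal V_0}\ell(\widetilde Q)^{n-p}$ over those boundary cubes, and each term is bounded below by $C\|E\|^{np/s}\big(\int_{9c\widetilde Q}|\nabla Eu|^s\big)^{p/s}$ via the measure-density estimate (which yields $\|E\|^{-n}$, not $\|E\|^{-p}$ as you wrote) and a Poincar\'e-type lower bound. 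Summing via the Hardy--Littlewood maximal function at exponent $p/s>1$ gives the $\|E\|^{n+p+\varepsilon}$ dependence, with $\varepsilon$ coming from $s\nearrow p$. A separate capacity-type argument shows $\mathcal H^{n-p}(\partial^M\widetilde A\cap\partial\Omega)=0$, so the singular weight contributes nothing on $\partial\Omega$. None of these steps has an analogue in your scheme.
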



Let us immediately comment on the range $1 < p < 2$ of exponents and the use of the homogeneous Sobolev space $L^{1,p}$ in Theorem \ref{thm:necessity}.
The reason for the range of exponents is that if $p \ge 2$, then the integral on the right-hand side of \eqref{eq:maininequality} is infinite for any set $A$ 
for which the zero-extension $\widetilde A = A$ would not satisfy \eqref{eq:maininequality}.
Thus, for $p \ge 2$ the conclusion of Theorem \ref{thm:necessity} provides no information. 

The use of the homogeneous Sobolev space is natural for scaling invariant results. 
In the case $\Omega$ is bounded, the result still applies for $W^{1,p}$-extension domains because these are  known to be $L^{1,p}$-extension domains as well (see \cite{K1990}). When thinking about moving between $W^{1,p}$- and $L^{1,p}$-extensions in bounded domains, one should observe that for a set $A$ occupying most of $\Omega$ (in our proof, for $A$ satisfying $|A| > \frac12|\Omega|$) the extension $\widetilde A$ satisfying \eqref{eq:maininequality} has to contain all of the space  $\mathbb R^n$ that is sufficiently far away from $\Omega$.

It is worth noticing also that, if $\Omega$ is bounded, any measurable set $A\subset\Omega$ for which the right hand side of the inequality \eqref{eq:maininequality} is finite must be of finite perimeter in $\Omega$, and also the set
$\widetilde A$ that we construct will be of finite perimeter in $\R^n$. If $\Omega$ were unbounded we would only have that $A$ and $\widetilde A$ are locally of finite perimeter in $\Omega$ and in $\R^n$, respectively.

One might wonder if the condition in Theorem \ref{thm:necessity} is also sufficient for $\Omega$ to be an $L^{1,p}$-extension domain. It turns out that this is not the case: Suppose $\Omega \subset \mathbb R^n$ is an arbitrary domain. We can modify $\Omega$ to a new domain $\Omega' = \Omega \setminus \bigcup_{i=1}^\infty B(x_i,r_i)$, where the balls $B(x_i,r_i) \subset \Omega$ are selected in such a way that $B(x_i,2r_i)\setminus B(x_i,r_i) \subset \Omega'$ (giving that we have an extension operator from $L^{1,p}(\Omega')$ to $L^{1,p}(\Omega)$), but so that they accumulate densely enough to the boundary of $\Omega$ so that for any $A \subset \Omega'$ with the right-hand side of \eqref{eq:maininequality} finite for $\Omega'$, the extension $\tilde A$ can be taken to be zero outside $\Omega$ so that the condition 
\eqref{eq:maininequality} again gives us no information on $\Omega$.

In dimensions at least three, one can make the above idea into a construction of a topologically nice extension domain with large boundary. In the version of the construction that we use to prove the following theorem, the removed balls from the domain are replaced by removed tubes, and they accumulate only to a large portion of the boundary instead of the whole boundary. 
\begin{theorem}\label{thm:example}
There exists a domain $\Omega\subset \R^3$ such that $\Omega = h(B(0,1))$ for a homeomorphism $h\colon \R^3 \to \mathbb \R^3$, $\dim_{\mathcal H}(\partial\Omega) = 3$ and $\Omega$ is a $W^{1,p}$-extension domain for all $p \in [1,\infty]$. 
\end{theorem}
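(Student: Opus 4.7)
The plan is to construct $\Omega$ by removing from the unit ball $B = B(0,1)$ a countable pairwise disjoint collection of open ``tubes'' $\{T_i\}_{i\in\N}$, each indenting $B$ from its boundary, so that $\Omega = B \setminus \bigcup_i \overline{T_i}$. Every $T_i$ will be bi-Lipschitz (with constants independent of $i$, after rescaling by $\diam(T_i)$) to a model tube --- a straight cylinder whose flat end is glued to a small disk-shaped ``mouth'' on $\partial B$ and whose other end is capped inside $B$. The tubes are arranged in a self-similar dyadic scheme: the $k$-th generation has diameter of order $\lambda^k$ for a fixed $\lambda\in(0,1)$ and is $\lambda^k$-separated, while the mouths are distributed densely along a compact fractal $U \subset \partial B$ of Hausdorff dimension $3$ (produced by branching each parent into enough children at each generation). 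Two quantitative features are built into the construction: the concentric doublings $\{2T_i\}$ have uniformly bounded overlap in $B$, and each collar $2T_i \setminus \overline{T_i}$ lies in $\Omega$ and is a uniformly Lipschitz domain.

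Granted this construction, I would verify the three conclusions as follows. The ambient homeomorphism is produced inductively: removing one Lipschitz tube attached to $\partial B$ along a disk preserves the homeomorphism type of $B$, witnessed by an ambient homeomorphism $\R^3 \to \R^3$ supported in a small neighborhood of the tube. Iterating yields $h_k$ with $h_k(B) = B \setminus \bigcup_{i\le k}\overline{T_i}$ and $\|h_k - h_{k-1}\|_\infty \lesssim \diam(T_k) \to 0$, so the uniform limit $h = \lim_k h_k$ is a homeomorphism of $\R^3$ with $h(B) = \Omega$. The dimension claim follows because $\partial\Omega$ contains the accumulation of the mouths, which is $U$ and has Hausdorff dimension $3$.

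For the $W^{1,p}$-extension property (all $p \in [1,\infty]$ simultaneously), given $u \in W^{1,p}(\Omega)$ I would extend in two stages. First, because each $2T_i \setminus \overline{T_i}$ is uniformly Lipschitz, Stein's theorem provides an extension operator $E_i \colon W^{1,p}(2T_i \setminus \overline{T_i}) \to W^{1,p}(2T_i)$ with norm independent of $i$ and of $p$. Set $\tilde u = u$ on $\Omega$ and $\tilde u = E_i u$ on each $T_i$. The bounded overlap of $\{2T_i\}$ gives
\[
\|\tilde u\|_{W^{1,p}(B)}^p \lesssim \|u\|_{W^{1,p}(\Omega)}^p + \sum_i \|E_i u\|_{W^{1,p}(2T_i)}^p \lesssim \|u\|_{W^{1,p}(\Omega)}^p,
\]
with the obvious essential-supremum modification for $p=\infty$, so $\tilde u \in W^{1,p}(B)$. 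A second, classical Stein extension from the Lipschitz domain $B$ to $\R^3$ then completes the argument.

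The main obstacle is the quantitative calibration of the first step: the tubes must be packed densely enough in $U$ to force $\dim_\mathcal{H}(\partial\Omega) = 3$, yet separated enough and of uniformly Lipschitz shape that the bounded overlap of $\{2T_i\}$ together with uniform-in-$p$ extension norms actually produces the telescoping control above. A matched self-similar Cantor-type scheme, in which both the sizes and the spacings of the generations shrink at the same geometric rate, resolves this tension, but checking the uniform Lipschitz and overlap estimates across all scales simultaneously --- while simultaneously keeping the fractal $U$ of full dimension --- is the technically most demanding point; a secondary care point is verifying the uniform convergence of the $h_k$, for which it suffices that the diameters $\diam(T_k)$ are summable along any nested sequence of generations.
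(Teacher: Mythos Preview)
Your proposal contains a fatal dimension obstruction. You place the fractal accumulation set $U$ on $\partial B = S^2$, which is a $2$-dimensional manifold; hence $\dim_{\mathcal H}(U) \le 2$, and no subset of $S^2$ can have Hausdorff dimension $3$. More generally, in your scheme each $\partial T_i$ is a $2$-dimensional Lipschitz surface, and since the $T_i$ have diameters $\sim\lambda^k$ and are attached to $\partial B$, their closures accumulate only to $U \subset \partial B$. Thus $\partial\Omega \subset \partial B \cup \bigcup_i \partial T_i \cup U$ is a countable union of sets of Hausdorff dimension at most $2$, forcing $\dim_{\mathcal H}(\partial\Omega) \le 2$. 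The collar hypothesis $2T_i\setminus\overline{T_i}\subset\Omega$ only reinforces this: it forces the tubes to be so well separated that no high-dimensional accumulation can occur anywhere.

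The paper's construction resolves this by sending the tubes \emph{into the interior} in a branching tree pattern rather than attaching disjoint tubes to $\partial B$. Concretely, one builds a $3$-dimensional Cantor set $C\subset(0,1)^3$ of Hausdorff dimension $3$ (and Lebesgue measure zero) and removes, at each stage $n$, tubes $T_{n,i}$ that connect the top face of the stage-$(n-1)$ construction cube to the top face of each of its $8$ stage-$n$ subcubes. The tubes thus form a connected tree whose closure contains $C$, giving $C\subset\partial\Omega$ and hence $\dim_{\mathcal H}(\partial\Omega)=3$. Because the tubes meet at branch points, your collar condition fails globally; the paper instead cuts each tube into an even number of short pieces uniformly bi-Lipschitz (after rescaling) to a fixed model, and extends in two passes---first over the odd-indexed pieces, then over the even-indexed ones---using that within each parity class the enlarged neighborhoods are pairwise disjoint. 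A final step uses that $C$, having null projections to all coordinate planes, is removable for $W^{1,p}$, so one lands in $W^{1,p}((0,1)^3)$ and then extends classically. The homeomorphism argument, however, follows essentially the inductive limiting scheme you outline.
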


 Note that the domain in Theorem \ref{thm:example} cannot be  an $(\varepsilon,\delta)$-domain, nor a John domain, since these domains have porous boundaries and hence their Hausdorff (and packing) dimensions would be strictly less than three. We also reiterate that the same type of example is not possible in $\R^2$ by the dimension bounds on the boundary of a simply connected planar Sobolev extension domain given in \cite{LRT2020}.

 We wrote the dependence on the norm of the extension operator explicitly in Theorem \ref{thm:necessity} mainly in order to start the investigation of the dependence between the norm and the constant $C$ in \eqref{eq:characterization_1<p<2}. Using this explicit form, we obtain a more quantified version of the necessity of \eqref{eq:characterization_1<p<2}.

\begin{theorem}\label{thm:planarcurveintro}
Let $\Omega\subset\R^2$ a bounded simply connected $L^{1,p}$-extension domain for some $1<2<p$. Then for every $\varepsilon>0$ there exists a constant $C(p,\varepsilon)>0$ such that for all $z_1,z_2\in \partial \Omega$ there exists a curve $\gamma\subset\R^2\setminus \Omega$ joining $z_1$ and $z_2$ so that 
\begin{equation}\label{eq:curvecond}
\int_\gamma \dist^{1-p}(z,\partial \Omega)\, \d s(z)\leq 
C(p,\varepsilon)\|E\|^{\frac{4+4p-p^2}{2-p}+\varepsilon}|z_1-z_2|^{2-p}.  
\end{equation}
\end{theorem}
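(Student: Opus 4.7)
The plan is to deduce the curve condition \eqref{eq:curvecond} from Theorem \ref{thm:necessity} by exploiting the planar Jordan-loop decomposition of the essential boundary of a finite-perimeter set given by Proposition \ref{prop:perimeter_decomposition}. Fix $z_1,z_2\in\partial\Omega$, set $r=|z_1-z_2|$, and choose a measurable test set $A\subset\Omega$ whose essential boundary inside $\Omega$ is concentrated near the segment $[z_1,z_2]$; a natural choice is $A=\Omega\cap H$ for the open half-plane $H$ bounded by the perpendicular bisector of $[z_1,z_2]$. A direct geometric computation then yields
\[
\int_{\Omega\cap\partial^M A}\dist(z,\partial\Omega)^{1-p}\,d\mathcal H^1(z)\lesssim r^{2-p}.
\]

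Applying Theorem \ref{thm:necessity} to $A$ produces an extension $\widetilde A\subset\R^2$ with $\widetilde A\cap\Omega=A$ and
\[
\int_{\partial^M\widetilde A}\dist(z,\partial\Omega)^{1-p}\,d\mathcal H^1(z)\leq C(p,\varepsilon)\|E\|^{2+p+\varepsilon}\,r^{2-p}.
\]
Since $\widetilde A$ has finite perimeter in $\R^2$, Proposition \ref{prop:perimeter_decomposition} writes $\partial^M\widetilde A$ as a countable family of essentially disjoint Jordan loops. Because $z_1$ and $z_2$ lie in the closures of different components of $\Omega\setminus\partial^M A$ (the two halves cut off by $H$), at least one of these loops must separate $z_1$ from $z_2$ in $\R^2$ and therefore contains an arc lying entirely in $\R^2\setminus\overline{\Omega}$ whose two ends approach $\partial\Omega$ near $z_1$ and $z_2$. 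This arc, completed by short pieces attaching it to the exact points $z_1$ and $z_2$, is my candidate for the curve $\gamma$.

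The main obstacle is bridging the gap between the exponent $2+p+\varepsilon$ produced by a single application of Theorem \ref{thm:necessity} and the larger target exponent $\tfrac{4+4p-p^2}{2-p}+\varepsilon$, together with the issue that the arc above only connects neighbourhoods of $z_1$ and $z_2$ rather than the points themselves. I would close both by iterating the construction on a geometrically decreasing sequence of scales $r_k=2^{-k}r$ clustered around each endpoint: at each scale a finer test set is selected, a finer arc in $\R^2\setminus\Omega$ is extracted from the corresponding $\widetilde A_k$, and all the arcs are concatenated into a single curve $\gamma$ joining $z_1$ directly to $z_2$. Summing the resulting geometric series of weighted integrals bounds the integral on $\gamma$ by $r^{2-p}$ times a power of $\|E\|$. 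The number of effective scales needed for the sum to dominate the target itself grows polynomially in $\|E\|$, and this polynomial combines multiplicatively with the $(2+p)$-factor from Theorem \ref{thm:necessity}; using the algebraic identity $(2+p)(2-p)=4-p^2$, the combined exponent rearranges to $\tfrac{4+4p-p^2}{2-p}$, with the residual $\varepsilon$-losses at each iteration absorbed into the final $+\varepsilon$.
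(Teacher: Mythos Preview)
Your first step already fails: the estimate
\[
\int_{\Omega\cap\partial^M A}\dist(z,\partial\Omega)^{1-p}\,d\mathcal H^1(z)\lesssim r^{2-p}
\]
for $A=\Omega\cap H$ is simply not true in general. The set $\Omega\cap\partial^M A$ is the intersection of $\Omega$ with the perpendicular bisector $L$ of $[z_1,z_2]$, and nothing prevents $L\cap\Omega$ from being much longer than $r$, nor from running close to $\partial\Omega$ for a long time. Concretely, take $\Omega=(0,N)\times(0,1)$ (an $L^{1,p}$-extension domain with operator norm bounded independently of $N$), $z_1=(0,0)$, $z_2=(0,1)$, so $r=1$. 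Then $L\cap\Omega=(0,N)\times\{1/2\}$ and $\dist(\cdot,\partial\Omega)\equiv 1/2$ on most of it, giving an integral $\gtrsim N$, while $r^{2-p}=1$. Your ``direct geometric computation'' is therefore unavailable, and with it the whole scheme collapses: you have no control on the right-hand side of Theorem~\ref{thm:necessity}, so the output curve carries no useful bound.

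This is exactly the obstruction the paper works around. Before applying Theorem~\ref{thm:necessity}, the paper first proves a \emph{quantitative} cigar-type condition for $\Omega$ (Lemmata~\ref{lma:cig_d}--\ref{lma:cig_l}), yielding for any $z_1',z_2'\in\partial\Omega$ an injective curve $\alpha\subset\Omega$ joining them with
\[
\int_\alpha\dist(z,\partial\Omega)^{1-p}\,ds(z)\le C(p)\,C_{\text{cig-l}}\,|z_1'-z_2'|^{2-p},\qquad C_{\text{cig-l}}=C(p)\|E\|^{\frac{2p}{2-p}}.
\]
The set $A$ to which Theorem~\ref{thm:necessity} is applied is the region of $\Omega$ bounded by this $\alpha$, not a half-plane intersection. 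The Jordan-loop decomposition then produces a curve $\tilde\alpha\subset\R^2\setminus\Omega$ joining $z_1'$ to $z_2'$ themselves (they lie on $\alpha\subset\partial^M A$, hence on the same loop), so no multiscale iteration is needed. The exponent $\frac{4+4p-p^2}{2-p}$ arises transparently as $(2+p)+\frac{4p}{2-p}$: the $2+p$ from Theorem~\ref{thm:necessity} and the $\frac{4p}{2-p}$ from two factors of $C_{\text{cig-l}}$ (one in the integral bound, one counting the components $\Omega_i$ in Step~2). Your attempt to manufacture the missing $\frac{4p}{2-p}$ through ``the number of effective scales grows polynomially in $\|E\|$'' is not a mechanism that actually occurs; that power of $\|E\|$ genuinely lives in the John/cigar constant, and you have to prove it separately.
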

We do not claim nor expect the dependence on $\|E\|$ in \eqref{eq:curvecond} to be sharp. However, our proof of Theorem \ref{thm:planarcurveintro} written in Section \ref{sec:quantitativecurve} gives the first explicit dependence. Since \eqref{eq:characterization_1<p<2} is a characterization, one could also try to get the dependence of the operator norm $\|E\|$ on the curve condition constant $C$. This direction of the proof of the characterization in \cite{KRZ2015} is more technical. Consequently, we suspect the quantitative dependence in this direction to be more difficult to obtain.

\subsection*{Acknowledgements}
The authors acknowledge the support from the Academy of Finland, grant no. 314789. This work was partly done while the first-named author was enjoying a postdoctoral position at the Department of Mathematics and Statistics of the University of Jyv\"akyl\"a. He also wants to thank the department for their kind hospitality during his time there.


\section{Preliminaries}

In what follows, we use the notation $C(\cdot)$ to mean a strictly positive and finite function on the parameters listed in the parentheses, i.e. a constant once the listed parameters are fixed. The function (constant) may change between appearances even within a chain of inequalities.

 For any point $x \in \mathbb R^n$ and radius $r>0$ we denote the open ball by $
 B(x,r) = \{y \in \mathbb R^n\,:\, |x-y| < r\}.$ More generally, for a set $A \subset \mathbb R^n$ we define the open $r$-neighbourhood as
 \[
 B(A,r) = \bigcup_{x \in A} B(x,r).
 \]
 
 We denote by $|A|$ the $n$-dimensional outer Lebesgue measure of a set $A \subset \mathbb R^n$.
 For any Lebesgue measurable subsets $A \subset \Omega\subset\R^n$ and any point $x \in \mathbb R^n$ we define the upper density of $A$ at $x$ over $\Omega$ as
 \[
 \overline{D}(A,\Omega,x) = \limsup_{r\searrow 0}\frac{|A\cap B(x,r)|}{|B(x,r)\cap\Omega|},
 \]
 and the lower density of $A$ at $x$ over $\Omega$ as
 \[
 \underline{D}(A,\Omega,x) = \liminf_{r\searrow 0}\frac{|A\cap B(x,r)|}{|B(x,r)\cap \Omega|}.
 \]
 If  $\overline{D}(A,\Omega,x) = \underline{D}(A,\Omega,x)$, we call the common value the density of $A$ at $x$ over $\Omega$ and denote it by $D(A,\Omega,x)$.
  If $\Omega=\R^n$ we simply write $\underline{D}(A,x),\overline{D}(A,x)$, and $D(A,x)$.
 The essential interior of $A$ 
 is then defined as
 \[
 \mathring{A}^M = \{x \in \mathbb R^n \,:\, D(A,x)=1 \},
 \]
 the essential closure of $A$ 
 as
 \[
 \overline{A}^M = \{x \in \mathbb R^n \,:\, \overline{D}(A,x) > 0\},
 \]
 and the essential boundary of $A$ 
 as
 \[
 \partial^M A = \{x \in \mathbb R^n\,:\, \overline{D}(A,x) > 0\text{ and } \overline{D}(\mathbb R^n \setminus A,x) > 0\}.
 \]

As usual,  $\mathcal{H}^s(A)$ stands for the $s$-dimensional Hausdorff measure of a set $A \subset \R^n$ obtained as the limit
\[
\mathcal H^s(A) = \lim_{\delta \searrow 0}\mathcal H_\delta^s(A),
\]
where $\mathcal H_\delta^s(A)$ is the $s$-dimensional Hausdorff $\delta$-content of $A$ defined as
\[
\mathcal H_\delta^s(A) = \inf\left\{\sum_{i=1}^\infty \diam(U_i)^s \,:\, A \subset \bigcup_{i=1}^\infty U_i, \diam(U_i) \le \delta\right\}.
\]

By a dyadic cube we refer to $Q = [0,2^{-k}]^n + \mathtt {j} \subset \R^n$ for some $k \in \mathbb Z$ and $\mathtt{j} \in 2^{-k}{\mathbb Z}^n$. We denote the side-length of such dyadic cube $Q$ by $\ell(Q) := 2^{-k}$.  

\subsection{Sets of finite perimeter}
A Lebesgue measurable subset $A\subset \R^n$ has finite perimeter in an open set  $\Omega$ if $\chi_A\in BV(\Omega)$, where $\chi_A$ denotes the characteristic function of the set $A$. We set $P(A,\Omega)=\|D \chi_A\|(\Omega)$ and call it the perimeter of $A$ in $\Omega$. Here 
\[
\Vert D \chi_A\Vert (\Omega)=\sup\left\{\int_{A}\,\text{div} (v)\, dx:\, v\in C^{\infty}_{0}(\Omega;\R^n) ,\, |v|\leq 1\right\}
\]
denotes the total variation of $\chi_A$ on $\Omega$.

It is well known that   a set $E$ has finite perimeter in $\Omega$ if and only if $ \mathcal{H}^{n-1}(\partial ^M E\cap\Omega)<\infty$ (see \cite[Section 4..5.11]{F1969}). Let us recall as well the isoperimetric inequality, which follows from the  $(1^*,1)$-Poincar\'e inequality for $BV$ functions (see for instance \cite[Theorem 3.44]{AFP2000}).

\begin{proposition}\label{iso.ine}
Let $\Omega\subset\R^n$ be an open set and $A\subset\Omega$ a set of finite perimeter in  $\Omega$. Let also $Q,Q'\subset\Omega$ be two dyadic cubes  with $\frac{1}{4}\ell(Q')\leq \ell(Q)\leq 4\ell(Q')$ and so that  $\interior(Q\cup Q)$ is connected.   Then we have 
\begin{equation}\label{iso_cubes}
P(A,\interior(Q\cup Q'))\geq C(n) \min\{|A\cap (Q\cup Q')|^{1-1/n}, | (Q\cup Q')\setminus A|^{1-1/n}\} .    
\end{equation}

Moreover, for every $r>0$ and $x\in\overline\Omega$,
\begin{equation}\label{iso_ball}
P(A,B(x,r)\cap\Omega)\geq C(n)\min\{ |A\cap B(x,r)|^{1-1/n}, | (B(x,r)\cap\Omega)\setminus A)|^{1-1/n}  \} .
\end{equation}
\end{proposition}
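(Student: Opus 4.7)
The plan is to deduce both inequalities from the $(1^*,1)$-Poincar\'e inequality for $BV$-functions applied to $u=\chi_A$.

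For the cube inequality \eqref{iso_cubes}, set $R:=\interior(Q\cup Q')$. Since $\tfrac14\ell(Q')\le \ell(Q)\le 4\ell(Q')$ and $R$ is connected, after a rigid motion and a dilation $R$ belongs to a finite list of uniformly Lipschitz (in fact John) domains. Consequently the $(1^*,1)$-Poincar\'e inequality
\[
\inf_{c\in\R}\,\|u-c\|_{L^{n/(n-1)}(R)}\le C(n)\,\|Du\|(R)
\]
holds with a constant depending only on $n$. Plugging in $u=\chi_A$ and choosing $c\in\{0,1\}$ to be the value for which $|\{u\ne c\}\cap R|=\min\{|A\cap R|,|R\setminus A|\}$ turns the left-hand side into $\min\{|A\cap R|,|R\setminus A|\}^{(n-1)/n}$ and the right-hand side into $C(n)P(A,R)$, which is exactly \eqref{iso_cubes}.

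For the ball inequality \eqref{iso_ball}, the same scheme is applied to $V:=B(x,r)\cap\Omega$. When $B(x,r)\subset\Omega$ the set $V$ is a ball and the standard relative isoperimetric inequality on the ball yields the claim directly with a purely dimensional constant. In the remaining cases one covers $V$ by a Whitney-type family of dyadic cubes contained in $\Omega$ whose neighbouring cubes have comparable side-lengths, and iterates \eqref{iso_cubes} along chains of adjacent cubes; a telescoping sum then collects the local cube estimates into a single bound controlled by $P(A,V)$.

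The most delicate point, and the main technical obstacle, is the chaining step in the ball case: one has to verify that the constant accumulated along the chain depends only on $n$ and not on the shape of $\Omega$ near $x$. This amounts to organising the Whitney chain so that every cube is visited a bounded number of times and the total $(n-1)$-dimensional perimeter counted over the chain is comparable to $P(A,V)$, which in turn relies crucially on the comparable-side-length hypothesis of \eqref{iso_cubes}.
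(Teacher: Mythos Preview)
The paper does not give a detailed proof of this proposition; it only remarks that both inequalities follow from the $(1^*,1)$-Poincar\'e inequality for $BV$ functions. Your treatment of the cube inequality \eqref{iso_cubes} is correct and is precisely the argument the paper has in mind: after a dilation, $\interior(Q\cup Q')$ belongs to a finite family of John domains depending only on $n$, so Poincar\'e applies with a purely dimensional constant.

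For the ball inequality \eqref{iso_ball}, however, there is a genuine gap in your approach, and not one that can be repaired. The chaining argument you sketch cannot yield a constant depending only on $n$, because the inequality as written is \emph{false} for a general open set $\Omega$. Take a dumbbell: let $\Omega\subset\R^n$ consist of two unit balls joined by a cylindrical tube of radius $\epsilon$, and let $A$ be the half of $\Omega$ on one side of the hyperplane bisecting the tube. Then $P(A,\Omega)\approx\epsilon^{n-1}$ while both $|A|$ and $|\Omega\setminus A|$ are comparable to $1$; choosing $x$ in the tube and $r$ large enough that $B(x,r)\supset\Omega$ makes \eqref{iso_ball} fail as $\epsilon\to 0$. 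Your Whitney chain would accumulate exactly this bad dependence on the width of the neck, so the ``delicate point'' you flag is in fact an obstruction.

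In the paper \eqref{iso_ball} is never actually invoked (only \eqref{iso_cubes} is used, in the proofs of Lemmata \ref{lem:moving_to_A'} and \ref{lem:1}), so this is most likely a misstatement. What \emph{does} follow directly from Poincar\'e on the ball $B(x,r)$, applied to the zero extension of $\chi_A$ to $\R^n$, is the version with $P(A,B(x,r))$ on the left-hand side, i.e.\ the perimeter computed in the full ball, which may pick up mass on $\partial\Omega$; that is presumably the intended statement.
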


The study of the boundary of sets of finite perimeter can be reduced to the study of Jordan loops via the following decomposition result from  \cite[Corollary 1]{ACMM2001}.
\begin{proposition} \label{prop:perimeter_decomposition}
 Let $E \subset \R^2$ have finite perimeter. Then, there exists a unique decomposition of $\partial^M E$ into rectifiable Jordan curves $\{C_i^+, C_k^- : i,k \in \N \} $, modulo $\mathcal{H}^1$-measure zero sets, such that 
 \begin{enumerate}
     \item Given $\interior(C_i^+)$, $\interior (C_k^+)$, $i \neq k$ they are either disjoint or one is contained in the other; given $\interior (C_i^-)$, $\interior (C_k^-)$, $i \neq k$, they are either disjoint or one is contained in the other. Each $\interior (C_i^-)$ is contained in one of the $\interior (C_k^+)$.
     \item $P(E,\R^2) = \sum_i \mathcal{H}^1 ( C_i^+) + \int _k \mathcal{H}^1 (C_k^-)$.
     \item If $\interior ( C_i^+) \subset \interior (C_j^+)$, $i \neq j$, then there is some rectifiable Jordan curve $C_k^-$ such that $\interior (C_i^+) \subset \interior (C_k^-) \subset \interior (C_j^+)$. Similarly, if $\interior(C_i^-) \subset \int(C_j^-)$, $i \neq j$, then there is some rectifiable Jordan curve $C_k^+$ such that $\interior (C_i^-) \subset \interior (C_k^+) \subset \interior (C_j^-)$.
     \item Setting $L_j = \{ i : \interior (C_i^-) \subset \interior (C_j^+) \}$, the sets $Y_j = \interior (C_j^+) \setminus \bigcup_{i \in L_j} \interior(C_i^-)$ are pairwise disjoint, indecomposable and $E = \bigcup_j Y_j$
 \end{enumerate}
\end{proposition}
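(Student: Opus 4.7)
The plan is to reduce the statement to two ingredients: (i) the decomposition of a general planar set of finite perimeter into countably many indecomposable components, and (ii) the assertion that in the plane the essential boundary of an indecomposable set is, modulo $\mathcal{H}^1$-null sets, a single ``outer'' Jordan curve together with a countable family of nested ``hole'' Jordan curves. Granted these, the full decomposition of $\partial^M E$ is obtained by collecting the outer and inner curves across all components, and the labelled properties (1)--(4) follow by bookkeeping.

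For (i), recall that $F$ is \emph{indecomposable} if $F = F_1 \cup F_2$ with $F_1 \cap F_2 = \emptyset$ and $P(F,\R^2) = P(F_1,\R^2) + P(F_2,\R^2)$ forces $|F_1|\,|F_2|=0$. I would produce the decomposition $E = \bigcup_j Y_j$ by an iterative splitting scheme: at each stage one splits the current piece along a partition that is optimal for perimeter additivity, with existence secured by the direct method (lower semicontinuity of the perimeter under $L^1_{\loc}$-convergence plus $BV$-compactness). Either the process terminates in finitely many steps, or, by perimeter additivity at each stage and $P(E,\R^2)<\infty$, at most countably many components arise with $\sum_j P(Y_j,\R^2) \le P(E,\R^2)$; equality and uniqueness are obtained by identifying the $Y_j$ via an equivalence relation on density-one points.

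For (ii), fix an indecomposable set $Y \subset \R^2$ of finite perimeter. By De Giorgi's structure theorem, $\partial^M Y$ is $\mathcal{H}^1$-rectifiable, and for $\mathcal{H}^1$-a.e.\ $z\in\partial^M Y$ the blow-up is a line with $Y$ lying on one side. Combining rectifiability with this one-sidedness, $\partial^M Y$ admits, up to an $\mathcal{H}^1$-null set, a countable covering by pairwise disjoint Lipschitz arcs. The key planar-topological step is then to close these arcs into Jordan curves: using the Jordan curve theorem and the indecomposability of $Y$, one argues that any ``dangling'' arc with a free endpoint would allow one to cut $Y$ into two pieces of positive measure along a short transversal without increasing the perimeter, contradicting indecomposability. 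One therefore obtains a disjoint countable family of Jordan curves $\{\Gamma_\alpha\}$ representing $\partial^M Y$ up to $\mathcal{H}^1$-null sets; exactly one $\Gamma_\alpha$ (call it $C^+$) separates $Y$ from $\infty$, the remaining ones $C_k^-$ bound bounded components of $\R^2\setminus\Gamma_\alpha$ contained in $\R^2\setminus Y$, and $Y = \interior(C^+) \setminus \bigcup_k \interior(C_k^-)$ up to Lebesgue-null sets.

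With (i) and (ii) in hand, one applies (ii) to each $Y_j$ to extract the outer curve $C_j^+$ and the inner curves $C_k^-$ (reindexed globally). Assertions (1) and (3) reduce to the Jordan curve theorem applied to pairwise disjoint Jordan curves (nested curves must either be disjoint interiors or one inside the other, and any strict nesting $\interior(C_i^+)\subset\interior(C_j^+)$ is interpolated by the outer boundary of whichever $Y_\ell$ contains $\interior(C_i^+)$); (2) adds the perimeter identity from stage (ii) across all $j$ and invokes (i); (4) is the very construction. The main obstacle, and the reason the statement is genuinely planar, is the closing-up argument in (ii): converting the rectifiable decomposition of $\partial^M Y$ into a disjoint family of \emph{Jordan} curves uses both the measure-theoretic one-sidedness at blow-ups and the rigidity of planar topology, and the corresponding claim fails in dimension $n\ge 3$ where the essential boundary of an indecomposable set may be much wilder.
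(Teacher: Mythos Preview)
The paper does not prove this proposition at all: it is quoted verbatim as \cite[Corollary~1]{ACMM2001} and used as a black box. There is therefore no ``paper's own proof'' to compare your attempt against; the authors simply import the result from Ambrosio--Caselles--Masnou--Morel.

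That said, your sketch follows the architecture of the proof in \cite{ACMM2001}: first decompose $E$ into indecomposable pieces, then analyse the boundary of a single indecomposable piece. Your description of step~(i) is adequate. Step~(ii), however, is where the real work lies, and your outline is substantially thinner than what is actually needed. The ``closing-up'' argument you describe---that a dangling arc would allow one to split $Y$ along a short transversal---is suggestive but not a proof: one has to control what happens at the $\mathcal H^1$-null exceptional set where the rectifiable arcs meet, and the passage from ``countable disjoint family of Lipschitz arcs'' to ``countable disjoint family of Jordan curves'' requires a careful connectedness analysis of $\partial^M Y$ (in \cite{ACMM2001} this goes through the notions of simple sets and saturation, together with a nontrivial use of the coarea formula and the structure of one-dimensional integral currents). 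Your sketch names the right obstacles but does not overcome them; if you were writing this up independently you would need to fill in that gap or, as the present paper does, cite the result.
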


\subsection{Whitney decomposition}\label{sec:whitney}

If $\Omega\subset \mathbb R^n$ is an open set, not equal to the entire space $\R^n$, we let $\mathcal W = \{Q_i\}_{i=1}^\infty$ be the standard \emph{Whitney decomposition} of $\Omega$, by which we mean that it satisfies the following properties:
\begin{itemize}
    \item[(W1)] Each $Q_i$ is a closed dyadic cube inside $\Omega$.
    \item[(W2)] $\Omega=\bigcup_i Q_i$ and for every $i \ne j$ we have $\text{int}(Q_i) \cap \text{int}(Q_j) = \emptyset$.
    \item[(W3)] For every $i$ we have $\ell(Q_i) \le \dist(Q_i,\partial \Omega)\le 4\sqrt{n} \ell(Q_i)$,
    \item[(W4)]  If $Q_i\cap Q_j \ne \emptyset$, we have $\frac{1}{4}\ell(Q_i) \le  \ell(Q_j)\le 4\ell(Q_i)$.
\end{itemize}
The reader can find a proof of the existence of such a dyadic decomposition of the set $\Omega$ in \cite[Chapter VI]{stein}.

For such Whitney decomposition $\mathcal W$ we take a partition of unity $\{\psi_i\}_{i=1}^\infty$ so that for every $i$ we have $\psi_i \in C^{\infty}(\mathbb R^n)$, {$\text{spt}(\psi_i)=\{x\in\R^n:\,\psi_i(x)\neq 0\} \subset B(Q_i,\frac{1}{16}\ell(Q_i))$,} $\psi_i \ge 0$, $|\nabla \psi_i| \le C(n) \ell(Q_i)^{-1}$, and 
\[
\sum_{i=1}^\infty \psi_i = \chi_\Omega.
\]
Notice that for each $Q_i \in \mathcal W$ the above together with the bound on the size of the supports and (W4) implies
\begin{equation}\label{eq:12Q}
 \psi_i(x) = 1 - \sum_{j \ne i}\psi_j(x) = 1 \qquad \text{for all }x \in \frac12Q_i.
\end{equation}
In order to ease the notation, we denote for each $Q_i \in \mathcal W$ by $\mathcal N(Q_i)$  the collection of neighboring cubes that have a common face with $Q_i$:
\[
\mathcal N(Q_i) = \left\{Q_j \in \mathcal W\setminus \{Q_i\}\,:\, \text{int}(Q_i\cup Q_j) \text{ is connected}\right\}.
\]

\subsection{Size estimates}
In this subsection we recall the remaining key auxiliary results that will be used in the paper.

The following lemma is a modification of \cite[Lemma 3.2]{KRZ2015}. This version of the estimate was proven in \cite[Lemma 2.3]{GBRT2021}. (Here we can simplify the presentation a bit since we do not need an exceptional set $F$.)

\begin{proposition}\label{lma:cube0to1}
Let $Q$ be an $n$-dimensional cube in $\mathbb{R}^n$ with sides parallel to the coordinate axes. Let $f\in C(Q)\cap W^{1,p}(\mathbb{R}^n)$ for some $1\leq p< \infty$ and suppose there exists $\delta\in (0,1)$ so that

\[
\min\left(|\{y \in Q\,:\, f(y) \le 0\}|, |\{y \in Q\,:\, f(y) \ge 1\}|\right) > \delta \ell(Q)^n.
\]
Then
\[
\int_{Q}|\nabla f(y)|^p\,\d y \ge C(n,p)\delta^{\frac{n-p}{n}} \ell(Q)^{n-p}.
\]
\end{proposition}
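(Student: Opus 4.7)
The plan is to reduce to a bounded function by truncation and then apply the Sobolev-Poincar\'e inequality on the cube $Q$. I will work in the range $1\le p<n$, which is the only regime in which the stated lower bound is nontrivial and which contains the application to Theorem~\ref{thm:necessity} (there $1<p<2\le n$).

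First I would set $g := \max(\min(f,1),0)$ on $\R^n$. Then $0\le g\le 1$, $g\in W^{1,p}(\R^n)$, $|\nabla g|\le|\nabla f|$ almost everywhere, and the sets $\{g=0\}\cap Q\supset\{f\le 0\}\cap Q$ and $\{g=1\}\cap Q\supset\{f\ge 1\}\cap Q$ each have measure strictly greater than $\delta\,\ell(Q)^n$. Let $g_Q=\fint_Q g\,dy\in[0,1]$. If $g_Q\le 1/2$, then $|g-g_Q|\ge 1/2$ on $\{g=1\}\cap Q$; otherwise $|g-g_Q|>1/2$ on $\{g=0\}\cap Q$. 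In either case there is a subset of $Q$ of measure greater than $\delta\,\ell(Q)^n$ on which $|g-g_Q|\ge 1/2$, which gives
\[
\int_Q |g-g_Q|^{\frac{np}{n-p}}\,dy \ \ge\ \left(\tfrac{1}{2}\right)^{\!\frac{np}{n-p}} \delta\,\ell(Q)^n.
\]

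Next I would invoke the Sobolev-Poincar\'e inequality on the cube $Q$ with Sobolev exponent $p^{*}=np/(n-p)$. Writing it in its averaged form and then multiplying through by the appropriate powers of $|Q|=\ell(Q)^n$ (using $1/p-1/p^{*}=1/n$), the $\ell(Q)$ factors cancel and one obtains
\[
\left(\int_Q |g-g_Q|^{\frac{np}{n-p}}\,dy\right)^{\!\frac{n-p}{np}} \le\ C(n,p)\left(\int_Q|\nabla g|^p\,dy\right)^{\!1/p}.
\]
Combining this with the lower bound from the previous step and raising everything to the $p$th power yields
\[
\int_Q|\nabla g|^p\,dy \ \ge\ C(n,p)\,\delta^{\frac{n-p}{n}}\,\ell(Q)^{n-p}.
\]
Because $|\nabla g|\le|\nabla f|$ a.e., the same estimate holds with $g$ replaced by $f$, which is the claimed inequality.

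The main (mild) technical point is the bookkeeping of the scaling in the Sobolev-Poincar\'e inequality so that, after taking the $p$th power, the exponent on $\delta$ comes out exactly as $(n-p)/n$ (any slip would produce $(n-1)p/n$ or a similar but incorrect exponent, as one sees if one tries to derive the bound instead from coarea plus isoperimetry plus a truncation argument). Continuity of $f$ on $Q$ is used only to ensure that $g$ is a well-defined Sobolev function with the required level-set measures; it plays no further role.
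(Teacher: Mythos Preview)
Your argument is correct in the range $1\le p<n$, and the exponent bookkeeping is right: the Sobolev--Poincar\'e inequality with critical exponent $p^*=np/(n-p)$ is scale-invariant in the unnormalised form you wrote, and raising to the $p$-th power turns $\delta^{p/p^*}\ell(Q)^{np/p^*}$ into exactly $\delta^{(n-p)/n}\ell(Q)^{n-p}$. Restricting to $p<n$ is harmless --- indeed the stated inequality fails for $p>n$ (take $f$ linear on the unit cube with small margins $\epsilon$: the left side stays bounded while $\delta^{(n-p)/n}\sim\epsilon^{(n-p)/n}\to\infty$), and the paper only ever invokes the proposition with $1<p<2\le n$.

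The paper itself does not prove this proposition; it cites it from \cite{GBRT2021} (as a modification of a lemma in \cite{KRZ2015}). A discarded draft argument in the paper's source follows a different line: Fubini plus H\"older along segments parallel to a coordinate axis. On each line where the truncated function passes from $0$ to $1$, H\"older gives $\int_{\text{line}}|\nabla f|^p\ge\ell(Q)^{1-p}$; integrating over the $(n-1)$-dimensional base then yields $\ell(Q)^{n-p}$ times the fraction of such lines. That route naturally produces a factor $\delta$ rather than $\delta^{(n-p)/n}$ without additional work, whereas your Sobolev--Poincar\'e approach delivers the sharp power immediately. So your proof is both complete in the relevant range and, in that respect, cleaner than the line-integration sketch.
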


For $L^{1,p}$-extension domains $\Omega$ with $1\leq p<\infty$ the following measure density condition holds for points $x\in\overline \Omega$. This version of the measure density condition was proven in \cite[Proposition 2.2]{GBRT2021} following the results in \cite{HKT2008}, see also \cite{K1990}.

\begin{proposition}\label{thm:mdc}
Let $1\leq p<\infty$ and let  $\Omega \subset \R^n$ be a Sobolev $L^{1,p}$-extension domain with an extension operator $E$. 
Then, for all $x \in \overline\Omega$ and 
\[
r\in \left(0,\min \left\lbrace 1, \left(\frac{|\Omega|}{2\, |B(0,1)|}\right)^{1/n}\right\rbrace \right),
\]
we have 
\[
 |\Omega\cap B(x,r)| \ge C(n,p)\|E\|^{-n}r^n.
\]
\end{proposition}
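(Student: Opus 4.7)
The plan is to obtain a scale-invariant recursive inequality for the normalized density $g(s) := |\Omega \cap B(x,s)|/s^n$ by testing the extension operator against a concentric Lipschitz cutoff and applying the Sobolev inequality on $\R^n$, and then to iterate this recursion to contradict the Lebesgue differentiation theorem at an interior point. I focus on the subcritical range $1 \le p < n$ (the only range used in this paper); for $p \ge n$ one may reason analogously with a large finite $L^q$ in place of $L^{p^*}$, or invoke the quasiconvexity of $L^{1,p}$-extension domains from \cite{K1990}.

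A preliminary reduction handles boundary points: for $x \in \overline\Omega \setminus \Omega$ one picks $x_j \in \Omega$ with $x_j \to x$, applies the interior case at $(x_j, r')$ for $r' < r$, and lets $r' \nearrow r$, using $B(x_j, r') \subset B(x,r)$ eventually. Fix now $x \in \Omega$ and $r$ in the allowed range. Let $u \colon \R^n \to [0,1]$ be a Lipschitz cutoff with $u \equiv 1$ on $B(x, r/2)$, $u \equiv 0$ outside $B(x, r)$, and $|\nabla u| \le 2/r$. Then $v := E(u|_\Omega) \in L^{1,p}(\R^n)$ coincides a.e.\ with $u$ on $\Omega$ and satisfies
\[
\|\nabla v\|_{L^p(\R^n)} \le (2/r)\,\|E\|\,|\Omega \cap B(x,r)|^{1/p}.
\]
The homogeneous Sobolev inequality on $\R^n$ provides $c = c(v) \in \R$ with $\|v - c\|_{L^{p^*}(\R^n)} \le C(n,p)\|\nabla v\|_{L^p(\R^n)}$, where $p^* = np/(n-p)$. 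Since $v = 1$ on $\Omega \cap B(x, r/2)$ and $v = 0$ on $\Omega \setminus B(x,r)$, minimizing the obvious lower bound for $\|v-c\|_{p^*}^{p^*}$ in $c$ produces a factor $2^{1-p^*}\min\{|\Omega \cap B(x,r/2)|,\,|\Omega \setminus B(x,r)|\}$, while the hypothesis on $r$ forces $|\Omega \setminus B(x,r)| \ge |\Omega|/2 \ge |B(0,1)|r^n$. Combining the estimates yields
\[
\min\bigl\{|\Omega \cap B(x, r/2)|,\, |B(0,1)|\,r^n\bigr\} \le C(n,p)\,\|E\|^{p^*}\,r^{-p^*}\,|\Omega \cap B(x,r)|^{p^*/p}.
\]
If the minimum on the left equals $|B(0,1)|r^n$, this already gives $|\Omega\cap B(x,r)| \ge c(n,p)\|E\|^{-p}r^n$, which is stronger than what is claimed; otherwise, dividing by $(r/2)^n$ and using $p^*/p - 1 = p/(n-p)$ rearranges to the dyadic recursion
\[
g(r/2) \le C(n,p)\,\|E\|^{p^*}\, g(r)^{p^*/p}.
\]

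The main technical point is improving the exponent $-p$ on $\|E\|$ coming from a single scale to the claimed $-n$; this is exactly what the iteration delivers, thanks to the arithmetic identity $n(p^*/p - 1) = p^*$. Suppose for contradiction that $g(r) \le M\|E\|^{-n}$ for some admissible $r$, with $M = M(n,p)$ chosen small enough that $C(n,p)\,M^{p/(n-p)} \le \tfrac12$ and $M \le |B(0,1)|$. Then $g(r)^{p^*/p - 1} \le M^{p/(n-p)}\|E\|^{-p^*}$, and plugging into the recursion gives $g(r/2) \le \tfrac12 g(r)$; the smallness $g(r/2) \le M\|E\|^{-n}/2 \le |B(0,1)|$ ensures that we remain in the recursive branch at the next scale. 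Iterating produces $g(r/2^k) \le 2^{-k} M\|E\|^{-n} \to 0$, which contradicts $\lim_{k\to\infty} g(r/2^k) = |B(0,1)|$ from the Lebesgue differentiation theorem at the interior point $x \in \Omega$. Hence $g(r) > M(n,p)\|E\|^{-n}$ for every admissible $r$ and every $x \in \Omega$; together with the boundary reduction above, this is the claim.
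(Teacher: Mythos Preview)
Your argument is correct. The paper does not actually prove this proposition; it is quoted from \cite[Proposition~2.2]{GBRT2021}, which in turn follows \cite{HKT2008} and \cite{K1990}. Your proof is precisely the standard argument from that literature: test the extension operator on a Lipschitz cutoff, apply the Sobolev inequality on $\R^n$, and iterate the resulting self-improving density inequality against the Lebesgue density at an interior point. Two small remarks for clarity: (i) the reason you ``remain in the recursive branch'' is that once $g(r)<M\|E\|^{-n}\le |B(0,1)|$, monotonicity gives $|\Omega\cap B(x,r/2)|\le g(r)r^n<|B(0,1)|r^n$, so the minimum on the left is always the first term; and (ii) your observation that only the subcritical range $p<n$ is needed here is accurate, since the proposition is invoked in the paper solely for $1<p<2\le n$.
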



\section{Proof of the necessary condition}\label{sec:necessity}

In this section we prove Theorem \ref{thm:necessity}. In order to make the structure of the proof clearer, we first present the proof assuming the more technical parts proven. These technical parts are stated as separate lemmata. They are then proven after the proof of Theorem \ref{thm:necessity}.

\begin{proof}[Proof of Theorem \ref{thm:necessity}]
We start with a measurable set $A\subset\Omega$ so that 
\begin{equation}\label{eq:int_finite}
\int_{\Omega \cap \partial^M A} \dist(z,\partial \Omega)^{1-p} \,d\mathcal H^{n-1}(z) < \infty.
\end{equation}
Notice that if \eqref{eq:int_finite} fails, we can simply take $\tilde A = A$ as the set satisfying the required inequality \eqref{eq:maininequality}.


Following the definitions in Subsection \ref{sec:whitney},
let $\mathcal W=\{Q_i\}$ and $\widetilde{\mathcal W}=\{\widetilde{Q}_i\}$ be the Whitney decompositions of $\Omega$ and $\R^n\setminus\overline\Omega$ respectively, and let $\{\psi_i\}_{i=1}^\infty$ be the partition of unity in $\Omega$ subordinate to $\mathcal W=\{Q_i\}$.

 We first modify our set $A$  by means of selecting those Whitney cubes that intersect the set $A$ in a large enough measure set. Namely, we let
\[
A'=\bigcup_{\substack{ Q_i\in \mathcal{W}\\ |A\cap Q_i|>\frac{1}{2}|Q_i|}} Q_i.
\]
It will be easier to handle this new set $A'$ rather than the original set $A$.

Next, for the constant $c=20\sqrt{n}$ we define
\[
A_0=\bigcup_{\substack{\widetilde Q\in\widetilde{\mathcal W}\\ |c\widetilde Q\cap A'|>|c\widetilde Q\cap (\Omega \setminus A')| }}\widetilde Q  .
\]
Our extension of the set $A$ is then defined as 
\[
\widetilde A=A\cup A_0.
\]

The task in proving Theorem \ref{thm:necessity} is now to show that the choice of $\widetilde A$ above works.
We divide this task into several lemmata.
The first lemma justifies the replacement of $A$ by $A'$.

\begin{lemma}\label{lem:moving_to_A'}
For the sets $A$ and $A'$ above we have
\[
\int_{\Omega \cap \partial^M A'} \dist(z,\partial \Omega)^{1-p} \,d\mathcal H^{n-1}(z)\le C(n) \int_{\Omega \cap \partial^M A} \dist(z,\partial \Omega)^{1-p} \,d\mathcal H^{n-1}(z).
\]
\end{lemma}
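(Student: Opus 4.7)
The set $A'$ is by construction a union of closed Whitney cubes of $\Omega$, so its essential boundary sits inside $\Omega$ and is covered $\mathcal{H}^{n-1}$-a.e.\ by the faces $F = Q_i \cap Q_j$ shared by pairs of face-adjacent Whitney cubes $Q_i, Q_j \in \mathcal{W}$ with $Q_i \subset A'$ and $Q_j \not\subset A'$. The plan is to control the weighted measure of each such face by the weighted perimeter of the original set $A$ in a small neighborhood of that face, and then sum up using the bounded overlap of such neighborhoods.

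Fix a face $F = Q_i \cap Q_j$ as above. By the defining property of $A'$ we have $|A \cap Q_i| > \tfrac{1}{2}|Q_i|$ and $|A \cap Q_j| \le \tfrac{1}{2}|Q_j|$. Since $\tfrac14 \ell(Q_i) \le \ell(Q_j) \le 4\ell(Q_i)$ by (W4), both $|A \cap (Q_i \cup Q_j)|$ and $|(Q_i \cup Q_j) \setminus A|$ are at least $c(n)\,\ell(Q_i)^n$. The Whitney property (W4) also guarantees that $\mathrm{int}(Q_i \cup Q_j)$ is connected with comparable side-lengths, so the isoperimetric inequality \eqref{iso_cubes} yields
\[
P\bigl(A, \mathrm{int}(Q_i \cup Q_j)\bigr) \ge C(n)\,\ell(Q_i)^{n-1}.
\]
On the other hand, $\mathcal{H}^{n-1}(F) \le \min(\ell(Q_i),\ell(Q_j))^{n-1} \le \ell(Q_i)^{n-1}$. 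By (W3), $\mathrm{dist}(z,\partial\Omega)$ is comparable to $\ell(Q_i)$ uniformly for $z \in Q_i \cup Q_j$, with constants depending only on $n$. Hence
\[
\int_F \mathrm{dist}(z,\partial\Omega)^{1-p}\, d\mathcal{H}^{n-1}(z)
\le C(n,p)\,\ell(Q_i)^{n-p}
\le C(n,p)\int_{\mathrm{int}(Q_i \cup Q_j) \cap \partial^M A} \mathrm{dist}(z,\partial\Omega)^{1-p}\, d\mathcal{H}^{n-1}(z),
\]
where in the last step we used the lower bound on $P(A, \mathrm{int}(Q_i \cup Q_j))$ combined with the two-sided comparison of $\mathrm{dist}(\cdot,\partial\Omega)$ with $\ell(Q_i)$.

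Summing this estimate over all adjacent pairs $(Q_i, Q_j)$ that contribute to $\partial^M A'$ and using that each Whitney cube has at most $C(n)$ face-neighbors (so each $\mathrm{int}(Q_i \cup Q_j)$ appears with multiplicity bounded by a dimensional constant in the right-hand sums), we obtain
\[
\int_{\Omega \cap \partial^M A'} \mathrm{dist}(z,\partial\Omega)^{1-p}\, d\mathcal{H}^{n-1}(z)
\le C(n,p)\int_{\Omega \cap \partial^M A} \mathrm{dist}(z,\partial\Omega)^{1-p}\, d\mathcal{H}^{n-1}(z),
\]
which (after absorbing the $p$-dependence into $C(n)$ via the uniform comparability of $\mathrm{dist}$ with $\ell(Q_i)$) is exactly the claimed bound.

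The main point to be careful about is the localization: one must check that the essential boundary of the \emph{union of closed cubes} $A'$ really lies on the shared faces (no boundary piece escapes to $\partial\Omega$, which is true because Whitney cubes are compactly contained in $\Omega$), and that the overlap counting when summing over faces stays dimensional. Apart from that, the argument is a direct application of the isoperimetric inequality on pairs of adjacent Whitney cubes together with the standard distance comparison (W3).
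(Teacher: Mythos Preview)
Your proof is correct and follows essentially the same approach as the paper's: both localize $\partial^M A'$ to faces shared by face-adjacent Whitney cubes with opposite $A'$-membership, apply the isoperimetric inequality \eqref{iso_cubes} on $Q_i\cup Q_j$ to bound $\ell(Q_i)^{n-1}$ from above by $P(A,Q_i\cup Q_j)$, use the comparability of $\dist(\cdot,\partial\Omega)$ with $\ell(Q_i)$ from (W3)--(W4), and sum with bounded overlap. Your remark that the $p$-dependence can be absorbed into a constant depending only on $n$ (since $1<p<2$ gives $(C(n))^{p-1}\le C(n)$) is exactly why the lemma is stated with $C(n)$ rather than $C(n,p)$.
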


The next step is then to go from the set $A'$ to a Sobolev function to which we can apply our $L^{1,p}$-extension operator.
This is done with a Whitney smoothing operator $S_{\mathcal W}$ defined via the partition of unity $\{\psi_i\}_{i=1}^\infty$ for $\Omega$.
We define for any $v \in L^{1}_{loc}(\Omega)$ a smoothened version of $v$ as
\begin{equation}\label{eq:SWdef}
 (S_{\mathcal W}v)(x) = \sum_{i=1}^\infty \psi_i(x)\frac{1}{|Q_i|} \int_{Q_i} v(y)\,d(y).
\end{equation}
Whitney smoothing operators similar to the one above have been used for instance in 
\cite{HK1998,BHS2002,LLW2020,GBR2021}.

In addition to smoothing the function, the operator $S_{\mathcal W}$ has the important property of leaving the trace of the function unmodified on the boundary of $\Omega$. Within our proof, this is the content of the last Lemma \ref{lem_strong_ext.}.
The second lemma relates the integral in \eqref{eq:maininequality} to the $L^p$-norm of the gradient of the smoothened version of the indicator function. We write the lemma for a general set $F$, but here inside the proof of Theorem \ref{thm:necessity} use it only for the set $A'$. 

\begin{lemma}\label{lem:1}
 Let $S_{\mathcal W}$ be the operator defined in \eqref{eq:SWdef}. Then for any measurable $F \subset \Omega$ with
 \[
\int_{\Omega \cap \partial^M F} \dist(z,\partial \Omega)^{1-p} \,d\mathcal H^{n-1}(z) < \infty
\]
 we have $S_{\mathcal W}\chi_F \in C^\infty(\Omega)$ and 
 \[
 \|\nabla S_{\mathcal W}\chi_F\|_{L^p(\Omega)}^p \le C(n,p) \int_{\Omega \cap \partial^M F} \dist(z,\partial \Omega)^{1-p} \,d\mathcal H^{n-1}(z).
 \]
\end{lemma}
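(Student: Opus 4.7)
\textbf{Proof plan for Lemma \ref{lem:1}.}
Smoothness will be immediate: setting $c_i := |F\cap Q_i|/|Q_i|\in[0,1]$, the identity $S_{\mathcal W}\chi_F(x)=\sum_i c_i\psi_i(x)$ together with the support condition on each $\psi_i$ and (W4) means that near any point of $\Omega$ only a uniformly bounded number of terms are nonzero, so the sum defines a $C^\infty$ function on $\Omega$.

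For the gradient estimate I plan to work one Whitney cube at a time. Fix $x\in Q_j$; using $\sum_i\nabla\psi_i(x)\equiv 0$ on $\Omega$, I can subtract $c_j$ from every coefficient and write
\[
\nabla S_{\mathcal W}\chi_F(x)=\sum_{Q_i\in\mathcal N^*(Q_j)}(c_i-c_j)\,\nabla\psi_i(x),
\]
where $\mathcal N^*(Q_j)$ is the (bounded) collection of Whitney cubes whose $\psi_i$-support meets $Q_j$, all with $\ell(Q_i)\approx\ell(Q_j)$. Combined with $|\nabla\psi_i|\le C(n)\ell(Q_j)^{-1}$, this reduces matters to estimating $|c_i-c_j|$ for Whitney-neighbors. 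For this I will apply the isoperimetric inequality (Proposition \ref{iso.ine}) on $U_{ij}:=\interior(Q_i\cup Q_j)$, together with a triangle-inequality argument comparing both $c_i$ and $c_j$ to $|F\cap U_{ij}|/|U_{ij}|$, to arrive at
\[
|c_i-c_j|\le C(n)\ell(Q_j)^{1-n}P(F,U_{ij}).
\]
The subcases $c_i,c_j\le\tfrac12$ (and symmetrically $\ge\tfrac12$) will be handled by using $c^{(n-1)/n}\ge c$ on $[0,1]$ to absorb the isoperimetric exponent $(n-1)/n$ back to $1$, while the straddling case $c_i<\tfrac12<c_j$ makes both $|F\cap U_{ij}|$ and $|U_{ij}\setminus F|$ comparable to $\ell(Q_j)^n$, forcing $P(F,U_{ij})\gtrsim\ell(Q_j)^{n-1}$ so the trivial bound $|c_i-c_j|\le 1$ suffices.

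The decisive step will be a linearization: since $|c_i-c_j|\in[0,1]$ and $p\ge 1$, one has $|c_i-c_j|^p\le|c_i-c_j|$. This is the heart of the argument, because a naive $p$-th power would produce $\ell(Q_j)^{(1-n)p}P(F,U_{ij})^p$, which cannot be absorbed when $P(F,U_{ij})$ exceeds $\ell(Q_j)^{n-1}$ (as is possible for fractal-like $F$). After linearization, the power-mean inequality over the bounded family $\mathcal N^*(Q_j)$ and integration on $Q_j$ give
\[
\int_{Q_j}|\nabla S_{\mathcal W}\chi_F|^p\,dx\le C(n,p)\,\ell(Q_j)^{1-p}\sum_{Q_i\in\mathcal N^*(Q_j)} P(F,U_{ij}).
\]
Summing over $j$ and using the bounded overlap of the sets $U_{ij}$ (each Whitney cube $Q_k$ appears as $Q_i$ or $Q_j$ in only $C(n)$ such pairs) rewrites the total as $\sum_k\ell(Q_k)^{1-p}\mathcal H^{n-1}(\partial^M F\cap Q_k)$, via the identity $P(F,U)=\mathcal H^{n-1}(\partial^M F\cap U)$. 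Finally (W3) identifies $\ell(Q_k)^{1-p}$ with $\dist(\cdot,\partial\Omega)^{1-p}$ on $Q_k$ up to a dimensional constant (recall $1-p<0$), producing the claimed estimate. The main obstacle to expect is precisely the linearization trick: without the observation $|c_i-c_j|^p\le|c_i-c_j|$ there is no way to pass from a $P^p$-type sum back to the $P$-type sum that matches the right-hand side of the lemma.
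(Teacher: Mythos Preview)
Your proposal is correct and follows essentially the same route as the paper: both bound $|c_i-c_j|$ by $C(n)\ell(Q_j)^{1-n}P(F,\interior(Q_i\cup Q_j))$ via the relative isoperimetric inequality, linearize using $|c_i-c_j|^p\le|c_i-c_j|$, and sum with bounded overlap of the Whitney neighbours. The only cosmetic difference is that the paper replaces your three-case analysis by the single observation that when $c_i\ge c_j$ one has $\min\{c_i,1-c_j\}\ge c_i-c_j$, giving $P\ge C(n)\ell^{\,n-1}|c_i-c_j|^{(n-1)/n}\ge C(n)\ell^{\,n-1}|c_i-c_j|$ in one line.
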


We now use $S_{\mathcal W}$ to pass from the characteristic function $\chi_{A'}$ to a Sobolev function
\[
u=S_{\mathcal W}\chi_{A'}\in L^{1,p}(\Omega).
\]
Lemma \ref{lem:1} together with Lemma \ref{lem:moving_to_A'} then gives us
\begin{equation}\label{eq:main_thm_1}
    \|\nabla u\|_{L^p(\Omega)}^p \le C(n) \int_{\Omega \cap \partial^M A} \dist(z,\partial \Omega)^{1-p} \,d\mathcal H^{n-1}(z).
\end{equation}

The third lemma shows that the extension $\widetilde A$ of the set $A$ has the correct property outside the closure of the domain $\Omega$. The fact that $\Omega$ is a Sobolev-extension domain is used in the proof of this lemma. Recall that $\|E\|$ denotes the norm of the $L^{1,p}$-extension operator. 

\begin{lemma}\label{lem:extended_set}
 With the $A_0$ and $u$ defined above, for every $\varepsilon >0$ we have
 \begin{equation*}\label{thm:last_step}
\int_{\partial^M A_0\setminus \overline\Omega} \dist(z,\partial \Omega)^{1-p} \,d\mathcal H^{n-1}(z) \le C(n,p,\varepsilon)\|E\|^{n+p+\varepsilon} \|\nabla u\|^{p}_{L^p(\Omega)}.
\end{equation*}
\end{lemma}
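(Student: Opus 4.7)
My plan is to reduce the integral on the left to a sum over ``jump pairs'' $(\widetilde Q,\widetilde Q')$ of adjacent Whitney cubes in $\widetilde{\mathcal W}$ with $\widetilde Q\subset A_0$ and $\widetilde Q'\not\subset A_0$, and then to pay for each such jump by a definite amount of $L^p$-gradient energy of the $L^{1,p}$-extension $Eu$ of $u$, through Proposition~\ref{lma:cube0to1}. Since $A_0$ is a union of closed cubes $\widetilde Q\in\widetilde{\mathcal W}$ and lies in $\R^n\setminus\overline\Omega$, the set $\partial^M A_0\setminus\overline\Omega$ equals, up to an $\mathcal H^{n-1}$-null set, the union of the common $(n-1)$-faces $F$ of such pairs. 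Writing $s=\ell(\widetilde Q)$, properties (W3)--(W4) give $\ell(\widetilde Q')\sim s$, $\dist(z,\partial\Omega)\sim s$ on $F$, and $\mathcal H^{n-1}(F)\le s^{n-1}$, so the left-hand side of the lemma is bounded above by $C(n)\sum_{(\widetilde Q,\widetilde Q')}s^{n-p}$.

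For each jump pair I would fix a cube $Q^*\subset\R^n$ of side length comparable to $s$ containing both $c\widetilde Q$ and $c\widetilde Q'$, hence also $c\widetilde Q\cap\Omega$ and $c\widetilde Q'\cap\Omega$. The definition of $A_0$ yields $|c\widetilde Q\cap A'|\ge\tfrac12|c\widetilde Q\cap\Omega|$ and $|c\widetilde Q'\cap(\Omega\setminus A')|\ge\tfrac12|c\widetilde Q'\cap\Omega|$, while the measure density condition (Proposition~\ref{thm:mdc}) gives $|c\widetilde Q\cap\Omega|,\,|c\widetilde Q'\cap\Omega|\ge C\|E\|^{-n}s^n$; this is where the factor $\|E\|^{-n}$ first enters. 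Because $A'$ is a union of whole Whitney cubes of $\Omega$ and, by~\eqref{eq:12Q}, $u=S_{\mathcal W}\chi_{A'}$ takes the constant value $\chi_{A'}(Q_i)\in\{0,1\}$ on each $\tfrac12 Q_i$, a direct volume comparison shows that both $\{u=1\}\cap Q^*$ and $\{u=0\}\cap Q^*$ have Lebesgue measure at least $\delta\,\ell(Q^*)^n$ with $\delta=c(n)\|E\|^{-n}$.

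Let $Eu\in L^{1,p}(\R^n)$ be the extension of $u$, so $Eu=u$ a.e.\ on $\Omega$ and $\|\nabla Eu\|_{L^p(\R^n)}\le\|E\|\,\|\nabla u\|_{L^p(\Omega)}$. After a standard mollification at a scale much smaller than $s$, producing a continuous representative whose $L^p$-gradient is controlled by $\|\nabla Eu\|_{L^p(Q^*)}$ and whose super/sublevel sets $\{\cdot\ge 1\}$, $\{\cdot\le 0\}$ in $Q^*$ still retain a comparable fraction of the previous masses, Proposition~\ref{lma:cube0to1} applied on $Q^*$ gives
\[
\int_{Q^*}|\nabla Eu|^p\,dz \;\gtrsim\; \delta^{(n-p)/n}\ell(Q^*)^{n-p} \;\gtrsim\; \|E\|^{-(n-p)}\ell(Q^*)^{n-p},
\]
equivalently $\ell(Q^*)^{n-p}\lesssim \|E\|^{n-p}\int_{Q^*}|\nabla Eu|^p$. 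Summing over all jump pairs (with the bounded overlap of the family $\{Q^*\}$, which follows from (W3)--(W4)) and then applying the operator-norm bound for $Eu$ yields the desired inequality.

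The main obstacle I anticipate is the continuity requirement in Proposition~\ref{lma:cube0to1}: since $Eu$ need not be continuous, one must carefully arrange the mollification (or an equivalent Sobolev regularization) so that both the level-set masses of order $\delta=c(n)\|E\|^{-n}$ and the $L^p$-gradient bound are preserved \emph{uniformly} over all scales $s$ appearing in the sum. Quantifying this step is delicate, and it is presumably at this juncture that the extra factor $\|E\|^{p+\varepsilon}$ in the stated exponent $\|E\|^{n+p+\varepsilon}$ appears, beyond the $\|E\|^n$ that the naive chain of estimates sketched above produces.
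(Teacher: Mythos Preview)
Your reduction to jump pairs $(\widetilde Q,\widetilde Q')$ and the use of the measure density condition together with Proposition~\ref{lma:cube0to1} are exactly the paper's strategy. The genuine gap is the claim that the family $\{Q^*\}$ has bounded overlap. Each $Q^*$ must contain $c\widetilde Q$ with $c=20\sqrt n$, so $Q^*$ is a large dilation of a Whitney cube of $\R^n\setminus\overline\Omega$ that reaches across $\partial\Omega$ into $\Omega$. Such dilated Whitney cubes do \emph{not} have bounded overlap: in the half-space model, a fixed boundary point lies in $\lambda\widetilde Q$ for one cube $\widetilde Q$ of every dyadic scale once $\lambda$ is large, and the same piling-up persists at points of $\Omega$ close to $\partial\Omega$. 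Properties (W3)--(W4) only force neighbouring Whitney cubes to have comparable size; they do not prevent this accumulation over scales. Hence the step $\sum_{(\widetilde Q,\widetilde Q')}\int_{Q^*}|\nabla Eu|^p\le C\int_{\R^n}|\nabla Eu|^p$ is unjustified, and this is why your bookkeeping lands on $\|E\|^{n}$ rather than the stated $\|E\|^{n+p+\varepsilon}$.

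The paper circumvents the overlap problem by applying Proposition~\ref{lma:cube0to1} with an auxiliary exponent $s\in(1,p)$ instead of $p$. One obtains
\[
\ell(\widetilde Q)^{n-p}\le C(n,p)\,\|E\|^{np/s}\,\ell(\widetilde Q)^{n}\left(\bint_{9c\widetilde Q}|\nabla Eu|^s\right)^{p/s}\le C(n,p)\,\|E\|^{np/s}\int_{\widetilde Q}\bigl(M(|\nabla Eu|^s)\bigr)^{p/s},
\]
and now the sum is over the \emph{undilated} cubes $\widetilde Q\in\widetilde{\mathcal W}$, which are pairwise disjoint. The $L^{p/s}$-boundedness of the Hardy--Littlewood maximal operator (valid because $p/s>1$) then closes the estimate. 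The $\varepsilon$ in $\|E\|^{n+p+\varepsilon}$ arises here, from letting $s\nearrow p$ while the maximal-function constant blows up, not from any mollification. The continuity hypothesis in Proposition~\ref{lma:cube0to1} is a minor issue by comparison; it can be handled by mollifying $Eu$ at scale $\ll\ell(\widetilde Q)$ without disturbing the level-set masses or the gradient bound.
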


Now, the combination of Lemma \ref{lem:extended_set} and the inequality \eqref{eq:main_thm_1} gives 
\begin{equation}\label{eq:main_thm_2}
\int_{\partial^M A_0\setminus \overline\Omega} \dist(z,\partial \Omega)^{1-p} \,d\mathcal H^{n-1}(z) \le C(n,p,\varepsilon)\|E\|^{n+p+\varepsilon}
\int_{\Omega \cap \partial^M A} \dist(z,\partial \Omega)^{1-p} \,d\mathcal H^{n-1}(z).
\end{equation}

The last lemma deals with the boundary of $\Omega$, where in principle some part of the measure theoretic boundary of $\widetilde A$ could live and cause the integral on the left-hand side of \eqref{eq:maininequality} to be infinite.

\begin{lemma}\label{lem_strong_ext.}
With our set $\widetilde A$ defined above, we have
$\mathcal{H}^{n-p}(\partial^M \widetilde A\cap\partial\Omega)=0$.
\end{lemma}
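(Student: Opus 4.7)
The plan is to identify $\partial^M \widetilde A \cap \partial \Omega$ with (a subset of) the exceptional set of non-$L^p$-Lebesgue points of the Sobolev extension $Eu \in L^{1,p}(\R^n)$, and to conclude via the classical fact that, for $1 < p < n$, this exceptional set has $\mathcal H^{n-p}$-measure zero. Here $1 < p < 2 \le n$, so $p < n$.

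First I would record the following consequence of standard Sobolev $(1,p)$-capacity theory: there is a set $N \subset \R^n$ with $\mathcal H^{n-p}(N) = 0$ such that, for every $z \in \R^n \setminus N$, there exists $L_z \in \R$ with
\[
\lim_{r \searrow 0}\frac{1}{|B(z,r)|}\int_{B(z,r)} |Eu(y)-L_z|^p\, dy = 0.
\]

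The key step is to show that for every $z \in \partial \Omega \setminus N$ the value $L_z$ must lie in $\{0,1\}$. Restricting the above limit to $B(z,r)\cap\Omega$, where $Eu = u = S_{\mathcal W}\chi_{A'}$, the identity \eqref{eq:12Q} gives that $u$ equals $1$ on $\tfrac12 Q_i$ for every $Q_i \subset A'$ and equals $0$ on $\tfrac12 Q_i$ for every $Q_i \not\subset A'$, so that
\[
\int_{B(z,r)\cap\Omega}|u-L_z|^p\,dy \;\ge\; (1-L_z)^p \sum_{Q_i\subset A'}\Big|\tfrac12 Q_i \cap B(z,r)\Big|\;+\;L_z^p \sum_{Q_i\not\subset A'}\Big|\tfrac12 Q_i \cap B(z,r)\Big|.
\]
By Proposition \ref{thm:mdc} we have $|B(z,r)\cap\Omega|\gtrsim\|E\|^{-n}r^n$, and the combined core half-cubes $\bigcup_i \tfrac12 Q_i$ cover a fixed $2^{-n}$-fraction of $\Omega$; hence (up to a boundary-strip error coming from Whitney cubes straddling $\partial B(z,r)$) their total volume inside $B(z,r)$ is $\gtrsim \|E\|^{-n} r^n$. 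If $L_z \in (0,1)$ both coefficients $L_z^p$ and $(1-L_z)^p$ are positive, so the $L^p$-Lebesgue condition would force both sums above to be $o(r^n)$, contradicting this lower bound. Thus $L_z \in \{0,1\}$.

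Finally, if $L_z = 0$ the same argument delivers $|A' \cap B(z,r)|/r^n \to 0$. Then, for every exterior Whitney cube $\widetilde Q \in \widetilde{\mathcal W}$ with $z \in c\widetilde Q$ and $\ell(\widetilde Q)$ small enough, one has $|c\widetilde Q \cap A'| \le |A' \cap B(z,C\ell(\widetilde Q))| = o(\ell(\widetilde Q)^n)$, whereas $|c\widetilde Q \cap (\Omega\setminus A')| \gtrsim \ell(\widetilde Q)^n$ by the measure density of $\Omega$ applied at a boundary point lying in $c\widetilde Q$ (such a point exists because $\dist(\widetilde Q,\partial\Omega)\le 4\sqrt n\,\ell(\widetilde Q)$ and $c=20\sqrt n$). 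Hence $\widetilde Q \not\subset A_0$, so $A_0$ has density $0$ at $z$, and consequently $\widetilde A = A\cup A_0$ has density $0$ at $z$; in particular $z \notin \partial^M \widetilde A$. The case $L_z = 1$ is symmetric, giving $D(\widetilde A,z)=1$ and again $z \notin \partial^M \widetilde A$. Therefore $\partial^M \widetilde A \cap \partial \Omega \subset N$ and $\mathcal H^{n-p}(\partial^M \widetilde A \cap \partial \Omega) = 0$.

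The main obstacle is the middle step: rigorously justifying that the union of core half-cubes $\bigcup_i \tfrac12 Q_i$ occupies a $\gtrsim r^n$-sized portion of every small ball $B(z,r)$ around $z \in \partial\Omega$, despite the fact that Whitney cubes of arbitrarily small size accumulate at $\partial \Omega$ and may introduce transition regions near $\partial B(z,r)$ where $u$ is neither $0$ nor $1$. Controlling these boundary strips cleanly, together with invoking the sharp form of the capacity-to-Hausdorff-measure estimate (to obtain $\mathcal H^{n-p}$-null rather than merely $\mathcal H^{n-p+\varepsilon}$-null exceptional set), is where the quantitative work lies.
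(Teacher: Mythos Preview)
Your Lebesgue-point strategy for showing that the approximate value of $u=S_{\mathcal W}\chi_{A'}$ at $\mathcal H^{n-p}$-a.e.\ boundary point lies in $\{0,1\}$ is a legitimate alternative to the paper's direct Vitali/Proposition~\ref{lma:cube0to1} argument (its ``Part~1''); and the ``main obstacle'' you flag is in fact minor: every Whitney cube meeting $B(z,r/C)$ is contained in $B(z,r)$ once $C\ge 1+\sqrt n$, so the core half-cubes inside $B(z,r)$ already cover at least $2^{-n}|\Omega\cap B(z,r/C)|\gtrsim r^n$.

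There are, however, two genuine gaps further on. First, the implication ``$\widetilde Q\not\subset A_0$ for every exterior cube with $z\in c\widetilde Q$'' does \emph{not} yield $D(A_0,z)=0$. A ball $B(z,r)$ contains many exterior Whitney cubes $\widetilde Q$ of side $\ll r$ with $z\notin c\widetilde Q$, and nothing in your argument prevents these from lying in $A_0$ and filling a fixed fraction of $B(z,r)$. The paper resolves this (its ``Part~3'') by a Vitali covering: each $\widetilde Q\not\subset A_0$ satisfies $|c\widetilde Q\cap(\Omega\setminus A')|\gtrsim|\widetilde Q|$; extracting a subfamily with pairwise disjoint $c\widetilde Q$'s and summing gives $|B(z,r)\setminus(A_0\cup\Omega)|\lesssim |B(z,Mr)\cap(\Omega\setminus A')|=o(r^n)$. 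Second, recall that $\widetilde A=A\cup A_0$, not $A'\cup A_0$. Your argument (even once completed) only produces $D(A',\Omega,z)\in\{0,1\}$, whereas concluding $D(\widetilde A,z)\in\{0,1\}$ also requires control of $A$ itself: for instance when $L_z=1$ you need $|(\Omega\setminus A)\cap B(z,r)|=o(r^n)$, and $D(A',\Omega,z)=1$ does not imply this in general (think of a set having density $3/4$ in every Whitney cube). The paper singles this out as a separate step (``Part~2'') and again needs the finite-energy hypothesis together with another Vitali/Proposition~\ref{lma:cube0to1} argument to rule it out $\mathcal H^{n-p}$-a.e.
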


Since we can write
\begin{equation*}
\partial^M \widetilde A=(\partial^M A_0 \setminus \overline \Omega)\cup(\Omega \cap \partial^M A) \cup  (\partial^M \widetilde A\cap\partial\Omega),
\end{equation*}
we can split the integral on the left-hand side of \eqref{eq:maininequality} and use the estimate \eqref{eq:main_thm_2}
and Lemma \ref{lem_strong_ext.} to obtain
\begin{align*}
\int_{\partial^M \widetilde A} \dist(z,\partial \Omega)^{1-p} \,d\mathcal H^{n-1}(z) & = \int_{\partial^M A_0\setminus \overline\Omega} \dist(z,\partial \Omega)^{1-p} \,d\mathcal H^{n-1}(z) \\
& \quad + \int_{\Omega \cap \partial^M A} \dist(z,\partial \Omega)^{1-p} \,d\mathcal H^{n-1}(z)\\
& \quad +\int_{\partial^M \widetilde A\cap\partial\Omega} \dist(z,\partial \Omega)^{1-p} \,d\mathcal H^{n-1}(z)\\
& \le C(n,p,\varepsilon)\|E\|^{n+p+\varepsilon}
\int_{\Omega \cap \partial^M A} \dist(z,\partial \Omega)^{1-p} \,d\mathcal H^{n-1}(z)\\
& \quad + \int_{\Omega \cap \partial^M A} \dist(z,\partial \Omega)^{1-p} \,d\mathcal H^{n-1}(z) + 0\\
& \le C(n,p,\varepsilon)\|E\|^{n+p+\varepsilon}
\int_{\Omega \cap \partial^M A} \dist(z,\partial \Omega)^{1-p} \,d\mathcal H^{n-1}(z).
\end{align*}
Thus, we conclude the proof of Theorem \ref{thm:necessity}.
\end{proof}

Let us then focus on proving the lemmata we used in the proof of Theorem \ref{thm:necessity}.

\begin{proof}[Proof of Lemma \ref{lem:moving_to_A'}]
Setting $a_i=\frac{|A'\cap Q_i|}{|Q_i|}\in \{0,1\} $ we start by writing
\begin{align*}
\int_{\Omega \cap \partial^M A'} \dist(z,\partial \Omega)^{1-p} \,d\mathcal H^{n-1}(z)
& \leq\sum_{Q_i}\int_{Q_i \cap \partial^M A'} \dist(z,\partial \Omega)^{1-p} \,d\mathcal H^{n-1}(z)\\
&\leq \sum_{Q_i} \sum_{Q_j \in \mathcal N(Q_i)} \ell(Q_i)^{1-p} P(A',Q_i\cup Q_j)\\
&= \sum_{Q_i} \sum_{\substack{Q_j \in \mathcal N(Q_i)\\ a_i\neq a_j}} \ell(Q_i)^{1-p} P(A',Q_i\cup Q_j),
\end{align*}
and 
\begin{align*}
\int_{\Omega \cap \partial^M A} \dist(z,\partial \Omega)^{1-p} \,d\mathcal H^{n-1}(z)
& \ge \frac12 \sum_{Q_i}\int_{Q_i \cap \partial^M A} \dist(z,\partial \Omega)^{1-p} \,d\mathcal H^{n-1}(z)\\
& \ge C(n,p) \sum_{Q_i} \sum_{Q_j \in \mathcal N(Q_i)} \ell(Q_i)^{1-p} P(A,Q_i\cup Q_j) \\
& \ge C(n,p) \sum_{Q_i} \sum_{\substack{Q_j \in \mathcal N(Q_i)\\a_i\neq a_j}} \ell(Q_i)^{1-p} P(A,Q_i\cup Q_j).
\end{align*}
Hence, we only need to check that for $i,j\in\N$ with $Q_j \in \mathcal N(Q_i)$ and $a_i\neq a_j$  we have $P(A', Q_i\cup Q_j)\le C(n) P(A, Q_i\cup Q_j)$. Assuming without loss of generality that $a_i=1$ and $a_j=0$, this is seen by using the isoperimetric inequality  \eqref{iso_cubes}
\begin{align*}
P(A, Q_i\cup Q_j)&\ge C(n) \min\{ |A\cap (Q_i\cup Q_j)|^{1-1/n}, |(Q_i\cup Q_j)\setminus A|^{1-1/n}\}\\
&\ge C(n) \min\{|A\cap Q_i|^{1-1/n},|Q_j\setminus A|^{1-1/n}\}\\
&\ge C(n) \min\{(1/2)^{1-1/n} \ell(Q_i)^{n-1}, (1/2)^{1-1/n} \ell(Q_j)^{n-1}\}\\
&\ge C(n)  \ell(Q_i)^{n-1}\\
&\ge C(n) P(A', Q_i\cup Q_j). \qedhere
\end{align*}
\end{proof}

\begin{proof}[Proof of Lemma \ref{lem:1}]
From the definition of $S_{\mathcal W}$, for every $Q_i\in\mathcal W$  we get
\[
\|\nabla S_{\mathcal W}\chi_F\|^{p}_{L^p(Q_i)}\le C(n,p) \sum_{Q_j \in \mathcal N(Q_i)} \ell(Q_j)^{n-p}|a_i-a_j|,
\]
where 
\[
a_i=\dfrac{1}{|Q_i|}\int_{Q_i}\chi_{F}(x)\,dx=\dfrac{|F\cap Q_i|}{|Q_i|}.
\]
Assume that we have $i,j\in\N$ with $Q_j \in \mathcal N(Q_i)$. We may further assume that $a_i\ge a_j$.
Then, by using the isoperimetric inequality  \eqref{iso_cubes} we get
\begin{align*}
P(F, Q_i\cup Q_j)&\ge C(n) \min\{ |F\cap (Q_i\cup Q_j)|^{1-1/n}, |(Q_i\cup Q_j)\setminus F|^{1-1/n}\}\\
&\ge C(n) \min\{|F\cap Q_i|^{1-1/n},|Q_j\setminus F|^{1-1/n}\}\\
&\ge C(n) \min\{(a_i)^{1-1/n} \ell(Q_i)^{n-1}, (1-a_j)^{1-1/n} \ell(Q_j)^{n-1}\}\\
&\ge C(n)  \ell(Q_i)^{n-1}|a_i-a_j|^\frac{n-1}{n}\\
&\ge C(n)  \ell(Q_i)^{n-1}|a_i-a_j|.
\end{align*}

Hence, we have 
\begin{align*}
\|\nabla S_{\mathcal W}\chi_F\|^{p}_{L^p(Q_i)}&\le C(n,p)\sum_{Q_j \in \mathcal N(Q_i)} \ell(Q_j)^{n-p}|a_i-a_j|\\
& \le C(n,p)  \sum_{Q_j \in \mathcal N(Q_i)} \ell(Q_i)^{1-p} P(F,Q_i\cup Q_j). 
\end{align*}
Therefore, by using the finite overlapping between Whitney cubes, we have
\begin{align*}
\|\nabla S_{\mathcal W}\chi_F\|^{p}_{L^p(\Omega)} &
 \le C(n,p) \sum_{Q_i} \sum_{Q_j \in \mathcal N(Q_i)} \ell(Q_i)^{1-p} P(F,Q_i\cup Q_j) \\
& \le C(n,p)\sum_{Q_i}\int_{Q_i \cap \partial^M F} \dist(z,\partial \Omega)^{1-p} \,d\mathcal H^{n-1}(z)\\
& \le C(n,p)\int_{\Omega \cap \partial^M F} \dist(z,\partial \Omega)^{1-p} \,d\mathcal H^{n-1}(z).\qedhere
\end{align*}
\end{proof}

\begin{proof}[Proof of Lemma \ref{lem:extended_set}]
We introduce the following subfamily of Whitney cubes of $\widetilde{\mathcal W}$
\[
\mathcal V_0=\left\{\widetilde Q\in \widetilde{\mathcal W}\, :\, \widetilde Q \subset A_0,  \partial^M A_0\cap \widetilde Q\neq \emptyset\right\}.
\]
We then have
\[
\partial^M A_0\setminus\overline\Omega\subset \bigcup_{\widetilde  Q\in\mathcal V_0}\partial (\widetilde Q).
\]
Let us fix $\widetilde Q\in \mathcal V_0$ for the moment. Then there exists a neighbouring cube $\widetilde Q '\in \widetilde{\mathcal W}$, that is $\widetilde Q'\cap \widetilde Q\neq\emptyset $, so that
$\widetilde Q' \not\subset A_0$.
By the definition of $A_0$, we have
\begin{equation}\label{eq:Qlarge}
 |c\widetilde Q\cap A'|>\frac12|c\widetilde Q\cap \Omega|
\end{equation}
and
\begin{equation}\label{eq:Q'small}
 |c\widetilde Q'\cap (\Omega \setminus A')| \ge \frac12|c\widetilde Q'\cap \Omega|.
\end{equation}
In particular, \eqref{eq:Qlarge} and \eqref{eq:Q'small} imply that
\[
\Omega \not\subset c\widetilde Q \qquad \text{or} \qquad \Omega \not\subset c\widetilde Q'.
\]
Therefore,
\begin{equation}\label{eq:diambound}
\max\{\ell(\widetilde Q), \ell(\widetilde Q')\} \le C(n) \diam(\Omega).
\end{equation}

Combining \eqref{eq:diambound},\eqref{eq:Q'small} and \eqref{eq:Qlarge} with the measure density condition stated in Proposition \ref{thm:mdc}, we get
\[
\min\left\{|c\widetilde Q\cap A'|,|c\widetilde Q'\cap (\Omega \setminus A')|\right\} \ge C(n,p)\|E\|^{-n}\ell(\widetilde Q)^n.
\]
Recall that $u=S_{\mathcal W}\chi_{A'}=\sum_{i=1}^\infty a_i\psi_i $ where $a_i=\frac{|A'\cap Q_i|}{|Q_i|}\in \{0,1\} $. By \eqref{eq:12Q} we have $\psi_i = 1$ on $\frac12Q_i$ and so if $Q\subset A'$, then $u=1$ on $\dfrac{1}{2}Q$ and if $Q\not\subset A'$, then $u=0$ on $\dfrac{1}{2}Q$. 
Therefore,
\[
\min\left\{|\{y \in 9c\widetilde Q\,:\, u(y) \le 0\}|, |\{y \in 9c\widetilde Q\,:\, u(y) \ge 1\}|\right\} > C(n,p) \|E\|^{-n} \ell(9c\widetilde Q)^n.
\]
Let $s \in (1,p)$. Then by Proposition \ref{lma:cube0to1}, we have
 \[
    \left(\int_{9c\widetilde Q} |\nabla Eu(x) |^s \,\d x \right)^\frac{p}{s} \ge \left( C(n,p)\|E\|^{-n}\ell(\widetilde Q)^{n-s}  \right)^\frac{p}{s} \ge C(n,p) \|E\|^{\frac{-np}{s}} \ell(\widetilde Q)^{n - p}\ell(\widetilde Q)^{(\frac{p}{s} - 1)n}.
  \]
 This concludes our estimate for the fixed $\widetilde Q\in \mathcal V_0$.
 
 Now, since $p/s>1$, we may use the boundedness of the Hardy-Littlewood maximal operator 
 \[
 M\colon L^{\frac{p}{s}}(\R^n)\to L^{\frac{p}{s}}(\mathbb R^n),
 \]
 to get 
 \begin{align*}
\int_{\partial^M A_0\setminus \overline\Omega} \dist(z,\partial \Omega)^{1-p} \,d\mathcal H^{n-1}(z)  &\le \sum_{\widetilde Q\in\mathcal V_0} \int_{\partial^M A_0\cap\widetilde Q}  \dist(z,\partial \Omega)^{1-p} \,d\mathcal H^{n-1}(z)\\
&\leq \sum_{\widetilde Q\in\mathcal V_0}\ell(\widetilde Q)^{n-p}\\
&  \le C(n,p) \|E\|^{\frac{np}{s}} \sum_{\widetilde Q\in\mathcal V_0}  \ell(\widetilde Q)^{(1-\frac{p}{s} )n} \left(\int_{9c\widetilde Q} |\nabla Eu(x) |^s \,\d x \right)^\frac{p}{s} \\
  &  \le C(n,p) \|E\|^{\frac{np}{s}}   \sum_{\widetilde Q \in \mathcal V_0} \ell(\widetilde Q)^{n} \left(\bint_{9c \widetilde Q} |\nabla Eu(x) |^s \,\d x \right)^\frac{p}{s} \\
  &  \le C(n,p) \|E\|^{\frac{np}{s}} \sum_{\widetilde Q \in \mathcal V_0}  \int_{\widetilde Q} \left|M(|\nabla Eu |^s)(x)  \right|^\frac{p}{s}\,\d x \\
  &  \le C(n,p) \|E\|^{\frac{np}{s}} \int_{\R^n\setminus \overline{\Omega}} \left|M(|\nabla Eu |^s)(x)  \right|^\frac{p}{s}\,\d x \\
  &  \le C(n,p) \|E\|^{\frac{np}{s}} \int_{\R^n} \left|M(|\nabla Eu |^s)(x)  \right|^\frac{p}{s}\,\d x \\
  & \le C(n,p,s) \|E\|^{\frac{np}{s}} \int_{\R^n} |\nabla Eu(x)|^p \,\d x
  \\
  &\le C(n,p,s) \|E\|^{\frac{np}{s}} ||E||^p \int_{\Omega} |\nabla u(x)|^p \,\d x.
 \end{align*}
 Since we may choose $\frac{p}{s}>1$ to be arbitrarily close to $1$ with the price of enlarging the constant $C(n,p,s)$, the lemma is proven.
 \end{proof}
 


\begin{proof}[Proof of Lemma \ref{lem_strong_ext.}]
We divide the proof into three parts. The parts 1 and 3 will imply the claim of the lemma, while part 2 is needed in the proof of part 3.

\medskip

{\color{blue}Part 1:} For $\mathcal H^{n-p}$-a.e. $x\in\partial \Omega$  the limit  $D(A',\Omega,x)=\lim_{r\to 0}\dfrac{|A'\cap B(x,r)|}{|B(x,r)\cap \Omega|}$ exists and is either $0$ or $1$.

\begin{proof}[Proof of Part 1] 
Let 
\[
F=\left\{x\in\partial\Omega:\, D(A',\Omega,x)\notin\{0,1\} \text{ or the limit does not exist}\right\} 
\]
and assume towards contradiction that $\mathcal{H}^{n-p}(F)>0$. Then, there exists $\delta>0$ so that $\mathcal{H}^{n-p}(F_\delta)>0$ for 
\[
F_{\delta}=\left\{x\in\partial\Omega:\,\exists r^x_i\searrow 0\;\;\text{such that}\; \dfrac{|A'\cap B(x,r^x_i)|}{|B(x,r^x_i)\cap \Omega|}\in[\delta,1-\delta]\right\} .
\]
Fix $\varepsilon\in (0,1)$ and for every $x\in F_\delta$ choose $i$ so that $r^x_i<\varepsilon$, then 
\[
F_\delta\subset\bigcup_{x\in F_\delta} B(x,r^x_i) 
\]
and hence by the Vitali covering theorem (see \cite[Theorem 1.24]{EG2015}) there exists a countable collection $\{B(x_i,r_i)\}_{i\in \N}$ so that 
\begin{equation}\label{lem.measure_controled}
\dfrac{|A'\cap B(x_i,r_i)|}{|B(x_i,r_i)\cap \Omega|}\in[\delta,1-\delta] 
\end{equation}
and $F_\delta\subset \bigcup_{i\in\N}B(x_i,5r_i) $.
Recall that $u = S_{\mathcal W}\chi_{A'}$, and that by Lemma \ref{lem:1} and Lemma \ref{lem:moving_to_A'} we have
\begin{align*}
\|\nabla u\|^{p}_{L^p(\Omega)}& \le C(n)\int_{\Omega \cap \partial^M A'} \dist(z,\partial \Omega)^{1-p} \,d\mathcal H^{n-1}(z)\\
& \le C(n)\int_{\Omega \cap \partial^M A} \dist(z,\partial \Omega)^{1-p} \,d\mathcal H^{n-1}(z) < \infty.
\end{align*}
So, we have  $u\in L^{1,p}(\Omega)$. We extend $u$ to $Eu\in L^{1,p}(\R^n)$.
Observe that for every $i\in\N$, by \eqref{lem.measure_controled} and by the measure density condition (Proposition \ref{thm:mdc}) we have
\[
|A'\cap B(x_i,r_i)|\geq \delta |B(x_i,r_i)\cap\Omega|\geq C(n,p)\|E\|^{-n}\delta r^{n}_{i}
\]
and
\[
|B(x_i,r_i)\setminus A'|\geq |(B(x_i,r_i)\cap\Omega)\setminus A'|\geq \delta |B(x_i,r_i)\cap\Omega|\geq C(n,p)\|E\|^{-n}\delta r^{n}_{i}.
\]
Therefore, by the definition of $u$ via $S_\mathcal W$, and the fact that $A'$ is the union of the same Whitney cubes used in the definition of $S_\mathcal W$, we have 
\[ 
|\{x\in B(x_i,r_i):\, Eu\leq 0\}|\geq C(n,p,\|E\|,\delta)r^{n}_{i}
\]
and
\[
|\{x\in B(x_i,r_i):\, Eu\geq 1\}|\geq C(n,p,\|E\|,\delta)r^{n}_{i}.
\]
Hence, we may apply Proposition \ref{lma:cube0to1} to get the estimate
\[
\int_{B(x_i,r_i)} |\nabla u(y)|^p\,dy\geq C(n,p,\|E\|,\delta)r^{n-p}_{i}.
\]
We can now conclude
\begin{align*}
    \mathcal{H}^{n-p}_{\varepsilon}(F_\delta) & \leq \sum_{i\in\N}(5r_i)^{n-p}\le C(n,p,\|E\|,\delta)  5^{n-p}\sum_{i\in\N} \int_{B(x_i,r_i)} |\nabla u(y)|^p\,dy\\
    & \le C(n,p,\|E\|,\delta) \int_{B(F_{\delta},\varepsilon)}|\nabla u(y)|^p\,dy.
\end{align*}
Using that by the measure density condition $|F_{\delta}|\leq |\partial \Omega|=0 $, the right hand side tends to zero as $\varepsilon\searrow  0$. So $\mathcal{H}^{n-p}(F_\delta)=0$ which is a contradiction. We have thus proven Part 1.
\end{proof}



\medskip

{\color{blue}Part 2:} The following two implications hold for $\mathcal H^{n-p}$-almost every $x \in \partial\Omega$:
\begin{equation}\label{eq:implication1}
\text{If }D(A',\Omega,x)=1\text{, then }D(A,\Omega,x)=1,    
\end{equation}
and
\begin{equation}\label{eq:implication2}
\text{if }D(A',\Omega,x)=0\text{, then }D(A,\Omega,x)=0.
\end{equation}

\begin{proof}[Proof of Part 2]
Let us first show that by going to complements, we only need to prove \eqref{eq:implication1}. Towards this, assume that \eqref{eq:implication1} is true for every measurable set $A \subset \Omega$.
Suppose then that $D(A',\Omega,x)=0$. Call $B=\Omega\setminus A$ and consider the associated 
$$B'=\bigcup_{\left\{ Q_i\in \mathcal{W}:\, |B\cap Q_i|\ge \frac{1}{2}|Q_i| \right\}} Q_i. $$
We have $B'=\Omega\setminus A'$. Since $D(B',\Omega,x)=1$, we have by assumption that $D(B,\Omega,x)=1$. Thus, $D(A,\Omega,x)=0$ and we have shown \eqref{eq:implication2}. (Notice that the form of the definitions of the sets $A'$ and $B'$ differ slightly in that one has a strict inequality while the other does not. However, it is easy to observe that this does not affect the proof below.)

Let us then prove \eqref{eq:implication1}. The argument is similar to the proof of Part 1.
This time we write
\[
G=\left\{x\in\partial\Omega:\, D(A',\Omega,x)= 1 \text{ and }D(A,\Omega,x) \ne 1 \right\} 
\]
and assume towards contradiction that $\mathcal{H}^{n-p}(G)>0$.
 Then, as in the previous proof, there exists $\delta>0$ so that $\mathcal{H}^{n-p}(G_\delta)>0$ for 
\begin{align*}
G_{\delta}=& \left\{x\in\partial\Omega\,:\, \exists r^x_i\searrow 0\;\;\text{such that}\; \dfrac{|A\cap B(x,r^x_i)|}{|B(x,r^x_i)\cap \Omega|}<1-\delta \right.\\
&\qquad \qquad \quad \left.\text{ and } \dfrac{|A'\cap B(x,r)|}{|B(x,r)\cap \Omega|}>\frac12 \text{ for all }0 < r < \delta\right\}.
\end{align*}
Now, at this stage it is enough to notice that by the definition of $A'$ we have
\[
 |A \cap B(x,Cr)| \ge \sum_{\substack{Q_i \in \mathcal W\\ Q_i\subset B(x,Cr)}}|Q_i\cap A| \ge
 \sum_{\substack{Q_i \in \mathcal W\\ Q_i\subset B(x,Cr)}}\frac12 |Q_i\cap A'| \ge \frac12|A'\cap B(x,r)|
\]
so that by the measure density, we have that for some $\delta'>0$ 
\[
G_{\delta} \subset \left\{x\in\partial\Omega:\,\exists r^x_i\searrow 0\;\;\text{such that}\; \dfrac{|A\cap B(x,r^x_i)|}{|B(x,r^x_i)\cap \Omega|}\in[\delta',1-\delta']\right\} .
\]
Now, repeating the proof of Part 1 gives the needed contradiction and proves \eqref{eq:implication1} and thus Part 2.
\end{proof}

\medskip

{\color{blue}Part 3:} 
The following two implications hold for $\mathcal H^{n-p}$-almost  every $x \in \partial\Omega$:
\begin{equation}\label{eq:implication3}
\text{If }D(A',\Omega,x)=1\text{, then }D(\widetilde A,x)=1,    
\end{equation}
and
\begin{equation}\label{eq:implication4}
\text{if }D(A',\Omega,x)=0\text{, then }D(\widetilde A,x)=0.
\end{equation}

\begin{proof}[Proof of Part 3]

Since the definition of $A_0$ passes (up to the difference between a strict and non-strict inequality) to the complements, similarly to the Part 2 it is enough to prove the implication \eqref{eq:implication3}.

Let $x \in \partial \Omega$ with $D(A',\Omega,x)=D(A,\Omega,x)=1$ and $r>0$. (Notice that by Part of the proof, this $D(A,\Omega,x)=1$ holds for $\mathcal H^{n-p}$-almost every $x \in \partial \Omega$ with $D(A',\Omega,x)=1$.)
Now, if $\widetilde Q \in \widetilde{\mathcal W}$ with $\widetilde Q\nsubseteq A_0 $, by the definition of $A_0$ and the measure density condition (Proposition \ref{thm:mdc}) we have
\begin{equation}\label{eq:Qtildebounds}
|c \widetilde Q\cap (\Omega \setminus A')| \ge \frac12 |c \widetilde Q\cap \Omega|
\ge C(n,p,\|E\|)|\widetilde Q|.
\end{equation}
Consider the collection
\[
\mathcal B = \left\{\widetilde Q \in \widetilde{\mathcal W}\,:\, \widetilde Q\nsubseteq A_0,\widetilde Q\cap B(x,r)\neq\emptyset\right\}
\]
and let $x_{\widetilde Q}$ be the center of each $\widetilde Q \in \widetilde{\mathcal W}$.
By the Vitali covering theorem there exists a subcollection $\mathcal B' \subset \mathcal B$ so that
\[
\bigcup_{\widetilde Q \in \mathcal B}B\left(x_{\widetilde Q},\sqrt{n}c\ell(\widetilde Q)\right) \subset \bigcup_{\widetilde Q \in \mathcal B'}B\left(x_{\widetilde Q},5\sqrt{n}c\ell(\widetilde Q)\right)
\]
and
\begin{equation}\label{eq:separateballs}
 B\left(x_{\widetilde Q_1},\sqrt{n}c\ell(\widetilde Q_1)\right) \cap B\left(x_{\widetilde Q_2},\sqrt{n}c\ell(\widetilde Q_2)\right) = \emptyset
\end{equation}
for any two $\widetilde Q_1,\widetilde Q_2 \in \mathcal B'$ with $\widetilde Q_1 \ne \widetilde Q_2$. Notice that
\eqref{eq:separateballs} implies that also 
\[
c\widetilde Q_1 \cap c\widetilde Q_2 = \emptyset.
\]
Hence, by \eqref{eq:Qtildebounds}
\begin{equation}\label{eq:densities_2}
\begin{split}
|B(x,r)\setminus (A_0\cup\Omega)| &\leq \sum_{\widetilde Q \in \mathcal B'} \left|B\left(x_{\widetilde Q},5\sqrt{n}c\ell(\widetilde Q)\right)\right| 
 \le C(n) \sum_{\widetilde Q \in \mathcal B'}|\widetilde Q|\\
& \leq C(n,p,\|E\|) \sum_{\widetilde Q \in \mathcal B'} |c \widetilde Q\cap (\Omega \setminus A')| \\
&\leq C(n,p,\|E\|)  |B(x, Mr)\cap(\Omega \setminus A')|,
\end{split}
\end{equation}
where $M>0$ is a constant depending only on $n$ so that $c\widetilde Q\subset B(x,Mr)$ for any $\widetilde Q \in \widetilde{\mathcal W}$ with  $\widetilde Q\cap B(x, r) \ne \emptyset$.

With \eqref{eq:densities_2} and the measure density condition we can estimate
\begin{align*}
    \frac{|B(x,r)\cap \widetilde A|}{|B(x,r)|}
    & = 1 - \frac{|B(x,r)\setminus (A_0\cup\Omega)|}{|B(x,r)|} - \frac{|B(x,r)\cap (\Omega \setminus A)|}{|B(x,r)|}\\
    & \ge 1 - C(n,p,\|E\|)\frac{|B(x, Mr)\cap(\Omega \setminus A')|}{|B(x,Mr)|} - \frac{|B(x,r)\cap (\Omega \setminus A)|}{|B(x,r)|}\\
    & \ge 1 - C(n,p,\|E\|)\frac{|B(x, Mr)\cap(\Omega \setminus A')|}{|B(x,Mr)\cap \Omega|} - C(n)\frac{|B(x,r)\cap (\Omega \setminus A)|}{|B(x,r)\cap \Omega|}\to 1,
\end{align*}
as $r \searrow 0$, since $D(A',\Omega,x)=D(A,\Omega,x)=1$. This proves \eqref{eq:implication3}.
\end{proof}

 
We can now conclude the proof of the lemma by taking $x \in \partial\Omega$ for which the conclusions of Part 1 and Part 3 above hold. 
Part 1 of the proof says that $D(A',\Omega,x)=\lim_{r\to 0}\dfrac{|A'\cap B(x,r)|}{|B(x,r)\cap \Omega|}$ exists and is either $0$ or $1$.
Then by Part 3 of the proof
\[
D(\widetilde A,x) = D(A',\Omega,x) \in \{0,1\}
\]
and hence $x \notin \partial^M\widetilde A$.
\end{proof}




\section{A quantitative version of the curve condition}\label{sec:quantitativecurve}

In the present section we use Theorem \ref{thm:necessity} to prove Theorem \ref{thm:planarcurveintro}. This gives a quantitative version of a result proven in \cite{KRZ2015}.

Theorem \ref{thm:planarcurveintro} states that if
$\Omega\subset\R^2$ is a bounded simply connected $L^{1,p}$-extension domain for some $1<p<2$ with an extension operator $E$, then for every $\varepsilon >0$ there exists a constant $C(p,\varepsilon)>0$ such that for all $z_1,z_2\in \partial \Omega$ there exists a curve $\gamma\subset\R^2\setminus \Omega$ joining $z_1$ and $z_2$ so that
\begin{equation}\label{eq:curve}
\int_\gamma \dist^{1-p}(z,\partial \Omega)\, \d s(z)\leq C(p,\varepsilon)\|E\|^{\frac{4+4p-p^2}{2-p}+\varepsilon}  |z_1-z_2|^{2-p}.
\end{equation}

The curve condition \eqref{eq:curve} was proven in \cite{KRZ2015} to be a characterization of planar bounded simply connected $W^{1,p}$-extension domains for $1 < p < 2$ (a similar characterizing condition for the complement of a bounded finitely connected planar domain for $p>2$ was given in \cite{Shvartsman}). Here we only prove the necessity, but provide a more explicit estimate on the dependence of the operator norm $\|E\|$ in \eqref{eq:curve}.

The proof in \cite{KRZ2015} of the necessity of \eqref{eq:curve} starts by observing that the domain $\Omega$ is $J$-John, by results in \cite[Theorem 6.4]{K1990}, \cite[Theorem 3.4]{gore1990}, and \cite[Theorem 4.5]{NV1991}. 
Recall that a bounded domain $\Omega \subset \R^2$ is called $J$-John for some constant $J \ge 1$ if there is a point $x_0\in\Omega$ and a constant $J \geq 1$  so that given $z\in\partial \Omega$ we can find a curve parameterized by arc length  $\gamma\subset\Omega$ joining $z$  with $x_0$ so that 
\begin{equation} \label{eq:john}
    \dist(\gamma(t),\partial\Omega)\geq \frac{t}{J}.
\end{equation}

The proof in \cite{KRZ2015} then continues by making a test function in $\Omega$ and by constructing the required curve using conformal maps. These steps make it difficult to track the constants.



The proof of \eqref{eq:curve} in our approach starts by examining two conditions similar to the John condition. We
first prove a quantitative version of the so-called $\text{cig}_d$ condition \eqref{eq:cig_d} (see 
\cite{NV1991} for this and similar conditions) for Sobolev extension domains.
In the lemma below and elsewhere in this section, for an injective curve $\gamma \subset \mathbb R^2$ (possibly defined on an open or half-open interval) and two points $x,y \in \overline{\gamma}$ we denote by $\gamma_{x,y}$ a minimal subcurve of $\gamma$ so that $\gamma_{x,y}\cup\{x,y\}$ is connected.

\begin{lemma}\label{lma:cig_d}
Let $\Omega\subset\R^2$ be a bounded simply connected $L^{1,p}$-extension domain for some $1<p<2$. Then for every $x,y \in \overline{\Omega}$ there exists an injective curve $\gamma \subset \Omega\cup\{x,y\}$ connecting $x$ to $y$ and satisfying 
\begin{equation}\label{eq:cig_d}
 \min\left\{\diam(\gamma_{x,z}),\diam(\gamma_{y,z})\right\} \le
 C_{\text{cig-d}}\dist(z,\partial\Omega)
\end{equation}
for all $z \in \gamma$, where $C_{\text{cig-d}} =C(p)\Vert E \Vert^{\frac{p}{2-p}}$.
\end{lemma}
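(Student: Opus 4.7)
The plan is to argue by contradiction: assume that for some $x, y \in \overline{\Omega}$ no injective curve in $\Omega \cup \{x,y\}$ joining them satisfies \eqref{eq:cig_d} with a given constant $K \geq 1$, and aim to bound $K$ from above by $C(p)\|E\|^{p/(2-p)}$. The idea is to convert the topological failure of \eqref{eq:cig_d} into a Sobolev-theoretic estimate via the extension operator, in the spirit of the techniques developed in Section \ref{sec:necessity}.

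The first step is to extract a quantitative crosscut from the failure of the cigar condition. Using the simple-connectedness of $\Omega \subset \R^2$ and a topological argument in the (unique) homotopy class of curves from $x$ to $y$, I would show that such failure forces the existence of a crosscut $S \subset \Omega$ (a relatively open embedded arc with both endpoints on $\partial\Omega$) separating $x$ from $y$ in $\Omega$, together with a scale $d > 0$ satisfying $\mathcal{H}^1(S) \leq C(n) d$, $\dist(S,\partial\Omega) \geq d/C(n)$, and $\min\{\dist(x,S), \dist(y,S)\} \geq Kd/C(n)$. The idea is that a bottleneck point $z_0$ on any candidate curve realising the $\min \diam$ condition in \eqref{eq:cig_d} produces such a crosscut, with $d \sim \dist(z_0, \partial\Omega)$.

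Given such a crosscut $S$, let $A$ be the component of $\Omega \setminus S$ containing $x$ and set $u := S_{\mathcal W}\chi_A$, using the Whitney smoothing operator from \eqref{eq:SWdef}. Since $\Omega \cap \partial^M A \subset S$ up to a null set, Lemma \ref{lem:1} together with the bounds on $\mathcal{H}^1(S)$ and on $\dist(\cdot,\partial\Omega)|_S$ gives
\[
\|\nabla u\|_{L^p(\Omega)}^p \leq C(n,p) \int_{\Omega \cap \partial^M A} \dist(z,\partial\Omega)^{1-p}\, d\mathcal{H}^1(z) \leq C(n,p)\, d^{2-p},
\]
so that $\|\nabla Eu\|_{L^p(\R^2)}^p \leq C(n,p) \|E\|^p d^{2-p}$. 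For the matching lower bound, \eqref{eq:12Q} gives $u\equiv 1$ on $\tfrac12 Q_i$ for every Whitney cube $Q_i \subset A$ and $u \equiv 0$ on $\tfrac12 Q_j$ for $Q_j \subset \Omega \setminus A$; combined with $\min\{\dist(x,S),\dist(y,S)\} \gtrsim Kd$, this forces $Eu$ to carry a genuine $L^p$-condenser between a region of $\R^2$ near $x$ where $Eu \geq 1$ and a region near $y$ where $Eu \leq 0$, each of ``radius'' $\gtrsim Kd$. A standard $p$-capacity estimate in $\R^2$ for such condensers then yields
\[
\|\nabla Eu\|_{L^p(\R^2)}^p \gtrsim C(p)\,(Kd)^{2-p}.
\]
Comparing with the upper bound gives $K^{2-p} \lesssim \|E\|^p$, i.e.\ $K \lesssim \|E\|^{p/(2-p)}$, as required.

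The main obstacle is Step 1: producing the quantitative crosscut with the three properties above from the bare topological failure of \eqref{eq:cig_d}, and tracking all constants explicitly in terms of $K$ and $d$. The capacity lower bound in the final step is also subtle when $x$ or $y$ lies on $\partial\Omega$: a naive use of Proposition \ref{lma:cube0to1} combined with the measure density of Proposition \ref{thm:mdc} would cost an extra factor $\|E\|^{n-p}$, worsening the exponent to $2/(2-p)$; to retain the sharp exponent $p/(2-p)$ one needs either a direct capacity estimate using interior points at distance $\gtrsim Kd$ from $S$ in $A$ and in $\Omega \setminus A$, or an approximation argument reducing the boundary case to the interior one.
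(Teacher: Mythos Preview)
Your high-level strategy --- build a Sobolev test function in $\Omega$ with small $L^{1,p}$-energy, extend it, and compare with a capacity-type lower bound --- is indeed the same mechanism the paper exploits. But the two arguments are organised very differently, and your Step~1 contains a genuine gap.

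The paper does \emph{not} argue by contradiction over all curves. Instead it \emph{constructs} the desired curve directly: via the Riemann map $\varphi\colon\mathbb D\to\Omega$, it takes the equidistant set $G=\{z:\dist_{\Omega,\varphi}(z,I_1)=\dist_{\Omega,\varphi}(z,I_2)\}$ between the two boundary arcs $I_1,I_2$ determined by $x,y$, and lets $\beta$ be an injective curve in a neighbourhood of $G$ joining $a$ to $b$. The point of this construction is that it \emph{comes with} the crosscut for free: at any $c\in\beta$, there are curves $\gamma^1,\gamma^2$ of length $\le 2\dist(\varphi(c),\partial\Omega)$ from $c$ to $I_1,I_2$, by the very definition of~$G$. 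So the paper never needs your Step~1 --- the short crosscut is built into the equidistant curve.

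Your Step~1, by contrast, asks: if \emph{every} injective curve from $x$ to $y$ fails \eqref{eq:cig_d} at some point, produce a single crosscut $S$ with $\mathcal H^1(S)\lesssim d$, $\dist(S,\partial\Omega)\gtrsim d$, and $\min\{\dist(x,S),\dist(y,S)\}\gtrsim Kd$. This is not obvious. A bad point $z_0$ on one curve, with $d=\dist(z_0,\partial\Omega)$, gives a segment of length $d$ from $z_0$ to $\partial\Omega$, but that is not a crosscut (one endpoint only); and even if you produce some crosscut near $z_0$, other curves might bypass it entirely. Extracting a single separating neck from a universal failure over all curves needs a minimisation or compactness argument you have not supplied, and the paper's equidistant-set construction is precisely the device that resolves this.

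Two further comments. First, the paper's test function is not the Whitney-smoothed indicator but the simpler $u=\chi_{\Omega_1}\cdot(\text{radial cutoff at scale }r)$; this makes $u$ exactly $1$ on $\Omega_1\setminus B(\varphi(c),2r)$ and exactly $0$ on $\Omega_2$, so that on every circle $S^1(\varphi(c),t)$ with $2r<t<Cr$ the extension $Eu$ must oscillate by~$1$. The lower bound then follows from an explicit polar-coordinate computation, not from an abstract condenser estimate. Your Whitney-smoothed $u$ only satisfies $u=1$ on the inner halves of cubes in $A$, which makes the circle-by-circle oscillation argument less clean. Second, you correctly flag that using Proposition~\ref{thm:mdc} would cost an extra $\|E\|^n$; the paper avoids this entirely because its annular lower bound needs no measure-density input --- both $\Omega_1$ and $\Omega_2$ meet every circle of radius $t\in(2r,Cr)$ simply because $\beta_{a,c}$ and $\beta_{b,c}$ have diameter $\ge Cr$ and are separated by $\gamma^1\cup\gamma^2\cup\beta_{c_1,c_2}\subset B(\varphi(c),r)$.
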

\begin{proof}
Let us first prove the claim for $x,y \in \partial \Omega$. By the Riemann mapping theorem there exists a conformal map $\varphi \colon \mathbb D \to \Omega$. Since we know that $\Omega$ is a John domain, by \cite[Theorem 2.18]{NV1991} the domain $\Omega$ is finitely connected along its boundary and hence 
$\varphi$ extends as a continuous map to the boundary. We refer to this extension still by $\varphi$. Consider $a \in \varphi^{-1}(\{x\})$ and $b \in \varphi^{-1}(\{y\})$ so that one of the open arcs in $S^{1}$ connecting $a$ and $b$ does not intersect $\varphi^{-1}(\{x,y\})$. Call this arc $I_1$ and write $I_2 = S^1 \setminus (I_1 \cup \{a,b\})$. 

Using the sets $I_1$ and $I_2$ we now define a set
\[
G = \left\{z \in \mathbb D\,:\, \dist_{\Omega,\varphi}(z,I_1) = \dist_{\Omega,\varphi}(z,I_2)\right\},
\]
where the distance $\dist_{\Omega,\varphi}(z,I)$ for a connected set $I \subset S^1$ and a point $z \in \mathbb D$ is defined by
\[
\dist_{\Omega,\varphi}(z,I) = \inf\{\ell(\gamma)\,:\,\gamma \subset \Omega \text{ curve such that }\varphi^{-1}(\gamma) \cup I \cup \{z\} \text{ is connected}\}.
\]
Notice that since $\Omega$ is a John domain we have for any non-empty arc $I$ and any $z \in \mathbb D$ that $\dist_{\Omega,\varphi}(z,I) < \infty$. This can be seen by taking $c \in I$, a sequence $c_i \in \mathbb D$ converging to $c$, the John curves $\gamma_i$ connecting $c_i$ to the John-center $x_0$, and finally a subsequence of $(\gamma_i)$ converging to the desired $\gamma$ giving $\dist_{\Omega,\varphi}(\varphi^{-1}(x_0),I) \le \ell(\gamma) < \infty$. The passage to an arbitrary $z \in \mathbb D$ follows since any two points inside $\Omega$ can be connected by a curve in $\Omega$ of finite length.
Notice moreover, that $\dist_{\Omega,\varphi}(\cdot,I)$ is a continuous function.

We claim that $G \subset \mathbb D$ is a closed set in $\mathbb D$ so that $a$ and $b$ are in the same connected component of $G\cup \{a,b\}$. Suppose this is not the case. Then there exists a path $\alpha$ from $I_1$ to $I_2$ that does not intersect $G$. However, the function 
\[
z \mapsto f(z) = \dist_{\Omega,\varphi}(z,I_1) - \dist_{\Omega,\varphi}(z,I_2)
\]
is continuous in $\mathbb D$, and so in particular along the path $\alpha$. Since $f$ is negative near $I_1$ and positive near $I_2$ the function $f$ must be zero on some point of $\alpha$. This contradicts $G \cap \alpha = \emptyset$ and the claim is proven. Let us call $F$ the connected component of $G \cup \{a,b\}$ that contains the points $a$ and $b$.


Now, consider the following open neighbourhood of $G$
\[
U = \left\{z \in \mathbb D\,:\, \frac12 < \frac{\dist_{\Omega,\varphi}(z,I_1)}{\dist_{\Omega,\varphi}(z,I_2)} < 2 \right\}.
\]
Since $G\cup\{a,b\} \subset U\cup\{a,b\}$ contains a connected component connecting $a$ to $b$, we can find an injective curve $\beta \colon (0,1) \to  U$ so that $\beta \cup \{a,b\}$ is connected. Notice that at this point we do not know if $\beta$ can be extended to $0$ and $1$ as a curve connecting $a$ and $b$,
but after establishing \eqref{eq:beta} below, 
we have that the image curve $\varphi(\beta) \colon (0,1) \to \Omega$ extends uniquely to a curve defined on $[0,1]$ connecting $\varphi(a)$ to $\varphi(b)$.

Next we will show that for any $c \in \beta$ we have
\begin{equation}\label{eq:beta}
    \min\left\{\diam(\varphi(\beta_{a,c})),\diam(\varphi(\beta_{b,c})) \right\} \le C(p)\Vert E \Vert^{\frac{p}{2-p}}\dist(\varphi(c),\partial\Omega).
\end{equation}

Towards proving \eqref{eq:beta}, let $c \in \beta$ and $C>0$ be so that
\begin{equation}\label{eq:betacontradiction}
\min\left\{\diam(\varphi(\beta_{a,c})),\diam(\varphi(\beta_{b,c})) \right\} \ge C\dist(\varphi(c),\partial\Omega).
\end{equation}
The estimate \eqref{eq:beta} is shown if we can prove that necessarily $C \le C(p)\Vert E \Vert^{\frac{p}{2-p}}$. We may assume that $C > 2$.


Let $\gamma^1$ be an injective curve in $\mathbb D \cup \{d_1\}$ joining $d_1 \in I_1$ to $c$ and let $\gamma^2$ be an injective curve in $\mathbb D \cup \{d_2\}$ joining $d_2\in I_2$ to $c$ so that they satisfy
\[
 \ell(\varphi(\gamma^i)) < 2\dist_{\Omega,\varphi}(c,I_i).
\]
Let $c_i \in \gamma^i \cap \beta$ be such that $\gamma_{d_i,c_i}^i \cap \beta$ is a singleton.
Now, the set $\mathbb D \setminus (\gamma_{d_1,c_1}^1 \cup \gamma_{d_2,c_2}^2\cup \beta_{c_1,c_2})$  has two connected components $O_1$ and $O_2$ so that $\beta_{a,c_1} \subset O_1 \cup \{a,c_1\}$ and $\beta_{b,c_2} \subset O_2 \cup \{b,c_2\}$, or with $c_1$ and $c_2$ swapped. Consequently, by \eqref{eq:betacontradiction} the sets
$\Omega_i = \varphi(O_i)$ satisfy
\[
\diam(\Omega^i)  \ge C\dist(\varphi(c),\partial\Omega).
\]

Denote $r = 4\dist(\varphi(c),\partial\Omega)$ and notice that since $c \in U$, we have
\begin{equation}\label{eq:gammaistaybounded}
r  = 4\min\left\{\dist_{\Omega,\varphi}(c,I_1),\dist_{\Omega,\varphi}(c,I_2)\right\} \ge 2\dist_{\Omega,\varphi}(c,I_i)> \ell(\varphi(\gamma^i))\quad \text{for }i=1,2.
\end{equation}
Define the test function
 \[
 u(z) = \chi_{\Omega_1}(z) \max \left\{ \min \left\{ \frac{|\varphi(c)-z| - r}{r},1 \right\},0 \right\}.
 \]
 Clearly $\text{spt} (\nabla u) \subset \overline{B}(\varphi(c), 2r)$ and $|\nabla u| \leq \frac{1}{r}$. Notice, that $u = 0$ on $B(\varphi(c), r)$ and by \eqref{eq:gammaistaybounded} we have $\varphi(\gamma^i) \subset B(\varphi(c), r)$. Hence, for each $z\in  \varphi( \gamma^1 \cup \gamma^2)$
 there exists $\varepsilon>0$ such that $u\equiv 0$ in $B(z,\varepsilon)$. Thus, $u \in W^{1,p}(\Omega)$.

 For the test function $u$ we have
 \[
 \int_{\Omega} |\nabla u|^p \leq 4\pi r^{2-p}.
 \]
Let $E \colon L^{1,p}(\Omega) \to L^{1,p}(\mathbb R^2)$ be the extension operator. Then in polar coordinates
\begin{equation}
\Vert E \Vert ^p 4\pi r^{2-p} \geq \Vert E \Vert^p \int_{\Omega} |\nabla u |^p \geq \int_{\R^2} |\nabla E u|^p \geq \int_{2r}^{Cr} \int_{0}^{2\pi } | \nabla Eu(\alpha,t)|^p t \,\text{d} \alpha\, \text{d} t . \label{eq:btlemma1}
\end{equation}

 By absolute continuity and H\"older's inequality for $2r < t < Cr$ we have
 \[
 1 \leq \int_0^{2\pi } |\nabla Eu(\alpha,t)|t \, \text{d} \alpha \leq \left( \int_0^{2\pi} |\nabla E u(\alpha,t) | ^p t \, \text{d} \alpha \right)^\frac{1}{p} (2\pi t)^{1-\frac{1}{p}}.
 \]

 Hence
 \begin{equation}
 \int_0^{2\pi} |\nabla Eu(\alpha,t)|^p t\, \text{d} \alpha \geq (2\pi t)^{1-p}. \label{eq:btlemma2}
 \end{equation}
 By combining \eqref{eq:btlemma1} and \eqref{eq:btlemma2} we get
 \[
 \Vert E \Vert ^p 4\pi r^{2-p} \geq (2\pi)^{1-p} \int_{2 r}^{Cr} t^{1-p} = \frac{(2\pi)^{1-p}}{ 2-p} \left( (Cr)^{2-p} - (2r)^{2-p} \right).
 \]
 This gives the upper bound 
 \begin{equation}\label{eq:Csimplebound}
 C \leq \left( \Vert E \Vert^p 2^{1+p} \pi^p (2-p) + 2^{2-p} \right)^{\frac{1}{2-p}}.
 \end{equation}
 Thus we have established \eqref{eq:beta} and the lemma is proven in the special case $x,y \in \partial \Omega$.
 
 Let us then consider the general case $x,y \in \overline{\Omega}$. In this case we repeat the previous construction but replace $\Omega$ by the simply connected domain $\Omega' = \Omega\setminus ([x,x']\cup[y,y'])$ where $x',y' \in \partial\Omega$ satisfy
 \[
  |x-x'| = \dist(x,\partial\Omega)\quad \text{and} \quad
  |y-y'| = \dist(y,\partial\Omega)
 \]
 and $[x,x']$ and $[y,y']$ denote the line segments from $x$ to $x'$ and from $y$ to $y'$, respectively.
 Notice that $\Omega'$ is not necessarily a Sobolev extension domain. However, for points near $x$ and $y$ the condition \eqref{eq:beta} is satisfied trivially, and for points far from them, an enlarged ball meets the sets $\varphi(I_1)\setminus ([x,x']\cup[y,y'])$ and
 $\varphi(I_2)\setminus ([x,x']\cup[y,y'])$, so one can still use the argument from the special case.
\end{proof}

The next step is to go from the $\text{cig}_d$ condition \eqref{eq:cig_d} to a $\text{cig}_l$ condition \eqref{eq:cig_l}. Before stating this as a lemma, let us recall the corresponding implication from \cite[p.~385--386]{MS1978} from the so-called $\text{car}_d$ condition to the so-called $\text{car}_l$ condition. This latter condition is very close to the John condition \eqref{eq:john}, where one of the endpoints of all the curves is a fixed point $x_0$.

\begin{lemma}\label{lma:MartioSarvas}
Let $\Omega \subset \R^2$ be a bounded domain and let $0 < \delta \leq 1$. Suppose that there exists a curve
$\gamma \colon [0,1]  \to \Omega$ such that for every $t\in [0,1]$
\[
 \gamma([0,t]) \subset B(\gamma(t), \frac{1}{\delta} \dist(\gamma(t), \partial \Omega)).
\]
Then there exists another arc length parametrized curve 
$\tilde\gamma \colon [0,d] \to \Omega$ with $\tilde\gamma(0)=\gamma(0)$, $\tilde\gamma(d) = \gamma(1)$ and
\[
\dist(\tilde\gamma(t), \partial \Omega) \geq 2^{-14} \delta^2 t \quad \text{for} \quad t\in [0,d].
\]
\end{lemma}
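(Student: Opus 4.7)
The plan is to convert the hypothesis (a ``$\mathrm{car}_d$''-type statement forcing the entire past trajectory to sit in one ball) into a quantitative John bound by replacing $\gamma$, dyadic scale by dyadic scale, with shortcut arcs that are simultaneously short and deep in $\Omega$, and then reparameterizing by arclength.

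First I would introduce the dyadic scales. Setting $r_0 := \dist(\gamma(0),\partial\Omega)$ (assume $r_0 > 0$; otherwise we carry out the argument for $\gamma$ restricted to a small positive offset and pass to the limit), I define stopping times $0=\tau_0<\tau_1<\dots<\tau_N=1$ recursively, letting $\tau_k$ be the first $t>\tau_{k-1}$ at which $\dist(\gamma(t),\partial\Omega)=2^k r_0$. Write $p_k := \gamma(\tau_k)$ and $r_k := 2^k r_0$. The hypothesis applied at time $\tau_k$ yields the key containment
\begin{equation*}
\gamma([0,\tau_k]) \subset B(p_k,\delta^{-1}r_k),
\end{equation*}
and in particular $|p_j-p_k|\le \delta^{-1}r_k$ for every $j\le k$. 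Since $\dist(p_k,\partial\Omega)=r_k$, the deep ball $B(p_k,r_k)$ lies inside $\Omega$.

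Next, between consecutive checkpoints $p_{k-1},p_k$ I would replace the (possibly long) subarc $\gamma|_{[\tau_{k-1},\tau_k]}$ by a shorter arc $\alpha_k\subset\Omega$ joining $p_{k-1}$ to $p_k$ and satisfying
\begin{equation*}
\ell(\alpha_k)\le C\,\delta^{-1} r_k, \qquad \dist(\alpha_k,\partial\Omega)\ge c\,\delta\, r_k.
\end{equation*}
The construction of $\alpha_k$ is a purely Euclidean procedure inside $B(p_k,\delta^{-1}r_k)$: the two safe balls $B(p_{k-1},r_{k-1})$ and $B(p_k,r_k)$ lie in $\Omega$, and they are linked by the original subarc of $\gamma$ sitting in $B(p_k,\delta^{-1}r_k)$. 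Using this I can produce a polygonal shortcut whose length is proportional to the inter-center distance $\delta^{-1}r_k$ and which stays a distance of order $\delta r_k$ from $\partial\Omega$. One factor of $\delta^{-1}$ is paid in length and one factor of $\delta$ is gained in depth; these two factors will combine to the eventual $\delta^2$.

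Finally, let $\tilde\gamma\colon[0,d]\to\Omega$ be the arclength parameterization of the concatenation $\alpha_1 * \alpha_2 * \dots * \alpha_N$ with $\tilde\gamma(0)=\gamma(0)$ and $\tilde\gamma(d)=\gamma(1)$. For $t\in[0,d]$, if $\tilde\gamma(t)$ lies on the $k$th piece, then summing the length estimates gives
\begin{equation*}
t\le\sum_{j=1}^{k}\ell(\alpha_j)\le C\,\delta^{-1}\sum_{j=1}^{k}r_j\le 2C\,\delta^{-1}r_k,
\end{equation*}
while by construction $\dist(\tilde\gamma(t),\partial\Omega)\ge c\,\delta\, r_k$. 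Dividing yields $\dist(\tilde\gamma(t),\partial\Omega)\ge c'\delta^2 t$, and a careful bookkeeping of the universal constants through all three steps reproduces the factor $2^{-14}$ in the statement. The main obstacle is Step~2: when $\delta$ is small the straight segment $[p_{k-1},p_k]$ need not lie in $\Omega$ and the original subarc of $\gamma$ may be arbitrarily long, so one must merge portions of $\gamma$ with Euclidean straightening inside the safe balls, making the length inflation ($\delta^{-1}$) and the depth ($\delta$) cooperate — this is precisely where the $\delta^2$ (as opposed to a single $\delta$) in the John constant originates.
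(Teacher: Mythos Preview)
The paper does not prove this lemma; it simply quotes it from Martio--Sarvas \cite[p.~385--386]{MS1978}, so there is no in-house proof to compare against. Your outline has the right architecture (dyadic checkpoints, shortcuts, concatenate, read off $\delta^{2}$ as $(\text{length inflation})\times(\text{depth})$), but as written it has a genuine gap at the very first step.

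Your stopping times $\tau_k$ are defined by the level sets $\dist(\gamma(t),\partial\Omega)=2^k r_0$, and you declare $\tau_N=1$. But nothing in the hypothesis forces $t\mapsto \dist(\gamma(t),\partial\Omega)$ to be monotone, or even to exceed $r_0$ at any later time. Take $\Omega$ a strip of width $w$ and length $L$ with $\delta=w/L$, and $\gamma$ the axial segment: the hypothesis holds and $\dist(\gamma(t),\partial\Omega)\equiv w/2$, so already $\tau_1$ does not exist. More generally $\dist(\gamma(1),\partial\Omega)$ can be strictly smaller than $r_0$, so your chain never terminates at $t=1$. The monotone quantity the hypothesis \emph{does} control is $D(t):=\diam\gamma([0,t])$: from $\gamma([0,t])\subset B(\gamma(t),\delta^{-1}\dist(\gamma(t),\partial\Omega))$ one gets $\dist(\gamma(t),\partial\Omega)\ge \tfrac{\delta}{2}D(t)$, and $D$ is continuous and non-decreasing. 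Martio--Sarvas base their dyadic decomposition on $D$ (equivalently, on exit radii from $\gamma(0)$), not on the raw distance to the boundary.

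Even after switching to the correct monotone parameter, your Step~2 remains the entire content of the proof and is only asserted, not done. With diameter-doubling checkpoints one has $|p_{k-1}-p_k|\le 2^k\epsilon$ but only $\dist(p_k,\partial\Omega)\ge \tfrac{\delta}{2}2^k\epsilon$, so the straight segment $[p_{k-1},p_k]$ need not lie in $\Omega$ when $\delta<1$; one has to subdivide further, inserting intermediate points of $\gamma$ so that consecutive ones fall inside each other's safe balls, and then control the number of such points per dyadic generation. That combinatorial bookkeeping is exactly what produces the $2^{-14}$, and it is absent from your sketch. You correctly flag this as ``the main obstacle,'' but the proposal does not overcome it.
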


Following the proof of \cite[Theorem 2.14]{NV1991} we now use Lemma \ref{lma:MartioSarvas} to obtain the passage from  $\text{cig}_d$ to $\text{cig}_l$.

\begin{lemma}\label{lma:cig_l}
Let $\Omega\subset\R^2$ be a bounded simply connected domain satisfying the condition \eqref{eq:cig_d} with some constant $C_{\text{cig-d}}$. Then for every $x,y \in \overline{\Omega}$ there exists an injective curve $\gamma \subset \Omega\cup\{x,y\}$ connecting $x$ to $y$ and satisfying 
\begin{equation}\label{eq:cig_l}
 \min\left\{\ell(\gamma_{x,z}),\ell(\gamma_{y,z})\right\} \le
 C_{\text{cig-l}}\dist(z,\partial\Omega)
\end{equation}
for all $z \in \gamma$, where $C_{\text{cig-l}} = 2^{14} C_{\text{cig-d}}^2$.
\end{lemma}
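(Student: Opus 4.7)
The plan is to derive the $\mathrm{cig}_l$ curve from the $\mathrm{cig}_d$ curve provided by the hypothesis, via Lemma \ref{lma:MartioSarvas} applied to two halves obtained after splitting at a carefully chosen midpoint.

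First, fix $x,y\in\overline{\Omega}$ and invoke \eqref{eq:cig_d} to obtain an injective curve $\gamma\colon[0,1]\to\Omega\cup\{x,y\}$ joining $x$ to $y$. To get the hypothesis of Lemma \ref{lma:MartioSarvas} on each half I need a split point where the initial and terminal sub-diameters balance. Since the map $s\mapsto\diam\gamma([0,s])-\diam\gamma([s,1])$ is continuous, equals $-\diam\gamma$ at $s=0$, and equals $\diam\gamma$ at $s=1$, the intermediate value theorem produces $s_0\in(0,1)$ with $\diam\gamma([0,s_0])=\diam\gamma([s_0,1])$. I set $w=\gamma(s_0)$; note $w\in\Omega$ because $\gamma$ meets $\partial\Omega$ only at its endpoints.

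Next I verify that each half satisfies the Martio--Sarvas hypothesis with $\delta=1/C_{\text{cig-d}}$. For $s\in[0,s_0]$, the monotonicity of $\diam\gamma([\cdot,1])$ gives $\diam\gamma([0,s])\le\diam\gamma([0,s_0])=\diam\gamma([s_0,1])\le\diam\gamma([s,1])$, so the minimum in \eqref{eq:cig_d} at $z=\gamma(s)$ is realized by $\diam\gamma([0,s])$. Hence
\[
\diam\gamma([0,s])\le C_{\text{cig-d}}\dist(\gamma(s),\partial\Omega).
\]
Because $\gamma(s)\in\gamma([0,s])$, this yields $\gamma([0,s])\subset\overline{B}(\gamma(s),C_{\text{cig-d}}\dist(\gamma(s),\partial\Omega))$, which (after a negligible enlargement of the radius to turn the closed ball into an open one) is exactly the hypothesis of Lemma \ref{lma:MartioSarvas} with $\delta=1/C_{\text{cig-d}}$. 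Applying Lemma \ref{lma:MartioSarvas} to $\gamma|_{[0,s_0]}$ produces an arc-length parametrized curve $\tilde\gamma_1\colon[0,d_1]\to\Omega$ with $\tilde\gamma_1(0)=x$, $\tilde\gamma_1(d_1)=w$, and
\[
t\le 2^{14}C_{\text{cig-d}}^{2}\dist(\tilde\gamma_1(t),\partial\Omega),\qquad t\in[0,d_1].
\]
The symmetric argument on $\gamma|_{[s_0,1]}$ (reparametrized backwards from $y$ to $w$) yields $\tilde\gamma_2\colon[0,d_2]\to\Omega$ with an analogous bound.

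Concatenating $\tilde\gamma_1$ with the reverse of $\tilde\gamma_2$ produces a (possibly non-injective) arc-length parametrized curve $\sigma\colon[0,L]\to\Omega\cup\{x,y\}$ of total length $L=d_1+d_2$, joining $x$ to $y$ through $w$, and satisfying
\[
\min\{t,L-t\}\le 2^{14}C_{\text{cig-d}}^{2}\dist(\sigma(t),\partial\Omega)\qquad\text{for all }t\in[0,L].
\]
Finally I extract an injective subcurve $\hat\gamma$ by the standard loop-removal procedure; since every point of $\hat\gamma$ corresponds to at least one parameter $t$ of $\sigma$ and loop removal can only shorten the initial and terminal subcurves, the inequality above transfers verbatim to $\hat\gamma$ with $\ell(\hat\gamma_{x,z})$ and $\ell(\hat\gamma_{y,z})$ in place of $t$ and $L-t$, delivering \eqref{eq:cig_l} with $C_{\text{cig-l}}=2^{14}C_{\text{cig-d}}^{2}$.

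The only real subtlety I expect is confirming that the $\mathrm{cig}_d$ bound on the sub-diameter around a point $\gamma(s)$ lying in the subcurve itself translates into the ball-containment required by Lemma \ref{lma:MartioSarvas}, and handling the endpoints $x,y$ which may lie on $\partial\Omega$ (straightforward by approximating $\gamma$ on $[\varepsilon,1-\varepsilon]$ and letting $\varepsilon\searrow 0$, since all quantities appearing are continuous). The loop-removal step to regain injectivity is routine and does not degrade the constant.
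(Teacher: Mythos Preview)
Your proposal is correct and follows essentially the same route as the paper: split the $\mathrm{cig}_d$ curve at a point where the two sub-diameters agree, apply Lemma~\ref{lma:MartioSarvas} with $\delta=1/C_{\text{cig-d}}$ to each half, concatenate, and pass to an injective subcurve. The only place where the paper is more careful is the boundary case $x,y\in\partial\Omega$: rather than appealing to ``continuity'' of all quantities, the paper first establishes the result for interior points, then for $x,y\in\overline{\Omega}$ takes approximating sequences $x_i,y_i\in\Omega$, obtains $\mathrm{cig}_l$ curves $\gamma^i$ with a uniform constant, and extracts a limit curve via Arzel\`a--Ascoli (using that the lengths are uniformly bounded since $\Omega$ is bounded) together with lower semicontinuity of length. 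Your $\varepsilon\searrow 0$ sketch amounts to the same thing, but note that the Martio--Sarvas output curves $\tilde\gamma_1^\varepsilon$ need not depend continuously on $\varepsilon$, so a genuine compactness argument is required, not just continuity.
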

\begin{proof}

Let us first consider the case where $x,y \in \Omega$.
Let $\gamma \subset \Omega$ be a curve joining $x$ and $y$ and satisfying \eqref{eq:cig_d}. Let $x_0 \in \gamma$ be a point such that $\diam(\gamma_{x,x_0}) = \diam(\gamma_{y,x_0})$.
Then, by using Lemma \ref{lma:MartioSarvas} separately to the curves $\alpha^1 = \gamma_{x,x_0}$ and $\alpha^2 = \gamma_{y,x_0}$ there exist arc length parameterized curves $\tilde\alpha^i\colon [0,d_i] \to \Omega$, for $i=1,2$ so that $\tilde\alpha^1(0) = x$, $\tilde\alpha^2(0)=y$, $\tilde\alpha^1(d_1)=\tilde\alpha^2(d_2)=x_0$, and
\[
\dist(\tilde\alpha^i(t), \partial \Omega) \geq 2^{-14} C_{\text{cig-d}}^{-2} t \quad \text{for} \quad t\in [0,d_i] \text{ and }i=1,2.
\]
The concatenation of $\tilde\alpha^1$ and $\tilde\alpha^2$ now gives the curve satisfying \eqref{eq:cig_l}.



Consider then the general case $x,y \in \overline{ \Omega}$.
Let $\{x_i\},\{y_i\} \subset \Omega$ be sequences converging to the points $x$ and $y$, respectively, and let $\gamma^i \colon [0,1] \to \Omega$ be a collection of constant speed parametrized curves connecting $x_i$ to $y_i$, and satisfying \eqref{eq:cig_l} with the same constant $C_{\text{cig-l}}$. Since $\Omega$ is bounded and the lengths of the curves are uniformly bounded, by Arzel\'a-Ascoli there exists a sequence $i_j \nearrow \infty$ and a curve $\gamma$ such that $\gamma_{i_j} \to \gamma$ uniformly. Moreover, by the lower semicontinuity of length
\begin{equation}
\begin{split}
\min \{ \ell (\gamma|_{[0,t]}) ,\ell (\gamma|_{[t,1]})  \} & \leq \liminf_{i\to\infty} \min \{ \ell (\gamma^i|_{[0,t]}) ,\ell (\gamma^i|_{[t,1]})  \}  \\
 & \leq \liminf_{i\to \infty} C_{\text{cig-l}}\dist(\gamma^i(t),\partial\Omega) \leq C_{\text{cig-l}}\dist(\gamma(t),\partial\Omega), \notag
\end{split}
\end{equation}
for all  $t \in [0,1]$. This concludes the general case.

Finally, note that the condition \eqref{eq:cig_l} still holds if we replace $\gamma$ with an injective subcurve \cite[Lemma 3.1]{Falconer86}.
\end{proof}

We are now ready to prove the main result of this section.

\begin{proof}[Proof of Theorem \ref{thm:planarcurveintro}]
Let $1<p<2$, $\varepsilon>0$ and $\Omega \subset \R^2$ a bounded and simply connected $L^{1,p}$-extension domain. Let $z_1,z_2 \in \R^2 \setminus \Omega$, $r = |z_1-z_2|$, $x = \frac{z_1+z_2}{2}$, and denote by $\Omega_i$ the connected components of $B(x,3r)\cap\Omega$ for which $\Omega_i \cap B(x,r) \neq \emptyset$. We divide the proof into two steps. 

{\color{blue}Step 1:} 
Let us first show that we can connect any points $z_1',z_2' \in \partial\Omega_i\cap \partial \Omega$ with a suitable curve.
By Lemmata \ref{lma:cig_d} and \ref{lma:cig_l} we know that there exists a curve $\gamma \colon [0,d] \to \overline{\Omega}$ parametrized by arc length between $z_1'$ and $z_2'$ so that
\begin{equation}\label{eq:proofcig}
\min\left\{\ell(\gamma_{z_1',z}),\ell(\gamma_{z_2',z})\right\} \le
 C_{\text{cig-l}}\dist(z,\partial\Omega)
\end{equation}
for every $z \in \gamma$, where 
\begin{equation}\label{eq:Cigvalue}
    C_{\text{cig-l}} = C(p)\Vert E \Vert^{\frac{2p}{2-p}}.
\end{equation}
Let us denote by $\alpha^j$, $j=1,2$, the subcurves of $\gamma$ such that $\ell(\alpha^1) = \ell(\alpha^2)$ and $\gamma = \alpha^1\cup\alpha^2$. We parametrize $\alpha^j \colon [0,\ell(\alpha^j)] \to \overline{\Omega}$ so that $\alpha^j(0) = z_j'$. We consider two cases. Assume first that $\gamma \subset B(x,4r)$. 
By the condition \eqref{eq:proofcig} we have 
\[
\ell ( \alpha^j|_{[0,t]}) = t \leq  
C_{\text{cig-l}} \dist(\alpha^j(t), \partial \Omega)  \leq 7 C_{\text{cig-l}}r \quad \text{for all } t \in [0,\ell(\alpha^j)].
\]

Let us then consider the case $\gamma \not \subset B(x,4r)$.   Let $\Delta$ be the connected component of $\Omega_i \setminus \gamma$ with $z_1',z_2'\in\partial\Delta$ and let $C$ be a cross-cut in $\Delta$ 
connecting $z_1'$ to a point in $\gamma \setminus B(x,4r)$. Let $w$ be the first point where $C$ intersects $S^1(x,4r)$ when travelling from $z_1'$. 
Denote by $S \subset S^1 (x,4r)$ the maximal arc containing $w$ such that $S\cap (\alpha^1 \cup \alpha^2)= \emptyset$. Let $w_1,w_2$ be the endpoints of $S$. By reordering if necessary, there exist minimal times $t_1,t_2$ such that $w_1 = \alpha^1(t_1)$ and $w_2=\alpha^2(t_2)$.
Let $\alpha$ be the curve parametrizing $\alpha^1|_{[0,t_1]} \cup S \cup \alpha^2 | _{[0,t_2]}$ by arc length. 
By \eqref{eq:proofcig}, we have
\[
\ell(\alpha^j|_{[0,t]}) = t \leq C_{\text{cig-l}} \dist(\alpha^j(t), \partial \Omega) \leq C_{\text{cig-l}} |w_j-z_j| \leq 7C_{\text{cig-l}}r
\]
for all $t \in [0,t_j]$.
Suppose $a\in S$ and $b\in \partial\Omega$ satisfy $|a-b|<\frac12r$. Then, since $S$ is contained in the interior of $\Delta$ and $\Omega_i \subset B(x,3r)$, we have that the line segment $[a,b]$ intersects one of the $\alpha^j$ at some point $\alpha^j(t)$. Since $\ell(\alpha^j|_{[0,t]})>\frac12r$, we have 
\[
|a-b| \ge |b-\alpha^j(t)| \ge \dist(\alpha^j(t), \partial \Omega) \ge \frac{r}{2C_{\text{cig-l}}}.
\]
Consequently,
\[
\dist(S, \partial\Omega) \geq \min \left\{r, \frac{r}{2C_{\text{cig-l}}}\right\} \geq \frac{r}{2C_{\text{cig-l}}}.
\]
Therefore (setting $S= \emptyset$ in the first case),
\begin{equation}\label{eq:intspecialcase}
    \begin{split}
        \int_{\alpha} \dist(z,\partial \Omega)^{1-p} \, \d s (z)& \leq \int_{\alpha^1} \dist(z,\partial \Omega)^{1-p} \, \d s (z) + \int_{\alpha^2} \dist(z,\partial \Omega)^{1-p} \, \d s (z)\\
        & \quad + \int_{S} \dist(z,\partial \Omega)^{1-p} \, \d s (z)  \\
        & \leq \frac{2}{2-p}\frac{1}{C_{\text{cig-l}}^{1-p}} \left(7 C_{\text{cig-l}}r\right)^{2-p} + \left(\frac{r}{2C_{\text{cig-l}}}\right)^{1-p} 8\pi r  \\
        & \leq C(p) C_{\text{cig-l}}|z_1'-z_2'|^{2-p}. 
    \end{split} 
\end{equation}

Finally, a connected component $A$ of $\Omega\setminus \alpha$ has finite perimeter in $\Omega$, and since $\Omega$ is bounded, the extension $\widetilde A$  provided by Theorem \ref{thm:necessity} also has finite perimeter in $\R^2$. By Proposition \ref{prop:perimeter_decomposition}, the boundary $\partial^M \widetilde A$ decomposes into Jordan loops $\{\Gamma_k\}$.

There exists one Jordan curve $\Gamma_k$ in the decomposition with $z_1,z_2\in\Gamma_k$ because the points must be in the same connected component of $\partial^M \widetilde A$.  We now write  $\Gamma_k=\alpha * \tilde{\alpha}$ as a union of two curves, both having end points  $z_1',z_2'$.
Therefore $\tilde{\alpha} \subset \R^2 \setminus \Omega$ and by Theorem \ref{thm:necessity} and \eqref{eq:intspecialcase},
we have
\begin{equation}\label{eq:mainthm_usage}
\begin{split}
    \int_{\tilde{\alpha}} \dist(z,\partial \Omega)^{1-p} \, \d s (z) & \leq C(p,\varepsilon)\|E\|^{2+p+\varepsilon} \int_{\alpha} \dist(z,\partial \Omega)^{1-p} \, \d s (z)\\
    & \leq C(p,\varepsilon)\|E\|^{2+p+\varepsilon}C_{\text{cig-l}}|z_1'-z_2'|^{2-p}.
    \end{split}
\end{equation}

{\color{blue}
Step 2:
}
We now construct the curve $\gamma$ by connecting the sets $\Omega_i$ by suitable line-segments and by using Step 1 for each $\Omega_i$ to connect the entrance and exit points of $\Omega_i$. See Figure \ref{fig:curves} for an illustration of the construction of $\gamma$.

\begin{figure}
    \centering
    \includegraphics[width=0.8\columnwidth]{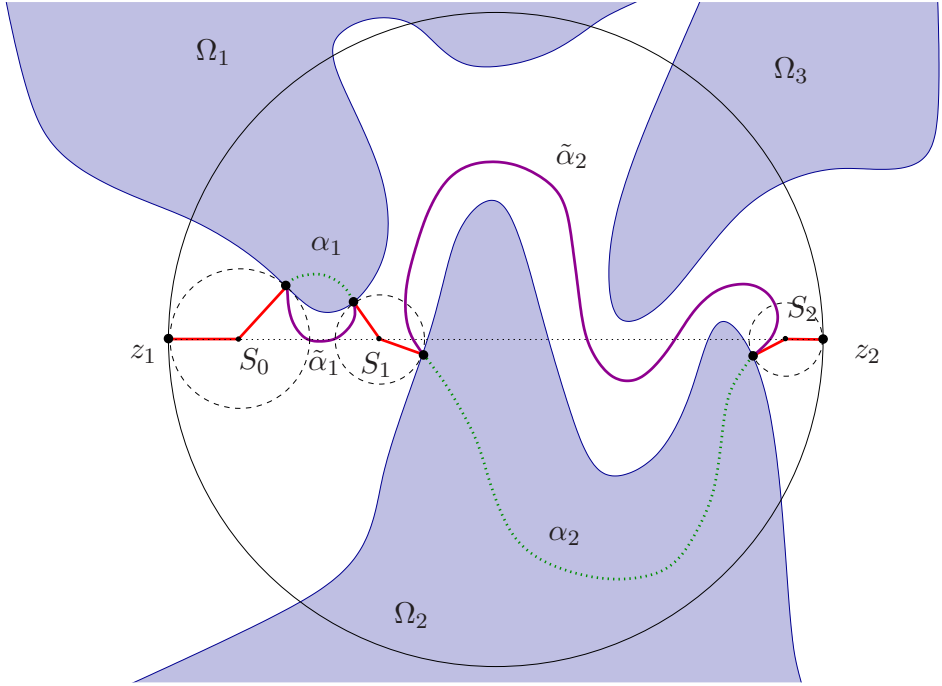}
   \caption{The curve $\gamma$ connecting $z_1$ and $z_2$ satisfying \eqref{eq:curve} is constructed by concatenating radial line segments (giving the curves $S_i$) inside disks completely contained in the complement of $\Omega$
   and curves $\tilde\alpha_i$ that are obtained from Step 1 of the construction as (part of) the boundaries of extensions of sets whose boundary in $\Omega_i$ is $\alpha_i$.
  }
    \label{fig:curves}
\end{figure}

Let us first check that by \eqref{eq:proofcig}, we get an upper bound for the number $k \in \N$  of sets $\Omega_i$.
By the definition of sets $\Omega_i$, $\Omega_i \cap B(x,r) \neq \emptyset$ for all $i = 1, \ldots , k $, so, there exists a curve in $\Omega_i$ satisfying \eqref{eq:proofcig} that starts in $B(x,r)$ and exits $B(x,3r)$ at some point $z \in S^1(x,3r)$ so that
\[
 \dist(z,\partial\Omega) \ge \frac{2r}{C_{\text{cig-l}}}.
\]
Consequently, there exists an arc $S \subset S^1(x,3r) \cap \overline{\Omega_i}$ such that  $\mathcal{H}^1 (S)> \frac{4r}{C_{\text{cig-l}}}$. Hence, $4k \frac{r}{C_{\text{cig-l}}} < 2\pi \cdot 3r$, and so
\begin{equation}\label{eq:omegabound}
k < \frac32\pi C_{\text{cig-l}}.
\end{equation}

Let us then construct a curve connecting $z_1$ and $z_2$  such that  \eqref{eq:curve} holds.
For notational convenience, write $\Omega_0 = \{z_1\}$ and $\Omega_{k+1} = \{z_2\}$.


Define $O_0 = \Omega_0$, $U_0 = \bigcup_{1 \leq i \leq k+1} \Omega_i $ and a continuous function $f_0$ by 
\[
f_0 \colon t \mapsto \dist(O_0, t z_2  + (1-t)z_1) - \dist(U_0, t z_2 + (1-t) z_1), \quad 0\leq t \leq 1.
\]
Let $t_0 = \max\{ t \in [0,1] : f_0(t) = 0 \}$, and 
\[
R_0:=\dist(O_0, t_0 z_2+ (1-t_0)z_1) = \dist(U_0, t_0 z_2 + (1-t_0) z_1).
\]
Denote by $P_0 =  t_0 z_2 + (1-t_0) z_1$. By the selection of $P_0$ and $R_0$ we have $B(P_0,R_0) \subset \R^2 \setminus \Omega$, and there exists $i_0 \in \Big\{ i \in \{1, \ldots , k, k+1 \} : \Omega _i \subset U_0 \Big\}$ 
such that $O_0 \cup \Omega_{i_0} \cup \overline{B}(P_0,R_0)$ is connected. 

We continue by induction. Suppose we have found $P_0,\dots,P_j$, $R_0, \dots, R_j$, $O_0,\dots, O_j$ and $U_0, \dots, U_j$. Replacing $O_j$ with $O_{j+1} = O_j \cup \Omega_{i_0}$ and $U_j$ with $U_{j+1} = U_j \setminus \Omega_{i_0}$ we repeat the above process until $i_0 = k+1$. 

The above process  gives us a (relabeled) sequence $\Omega_0, \ldots ,\Omega_{j+1}$ such that each adjacent pair $\Omega_m$, $\Omega_{m+1}$ may be connected with $S_m=[w_m^2,P_m] \cup [P_m,w_{m+1}^1]$ where 
$w_0^2 = z_1$, $w_{j+1}^1 = z_2$, and 
$w_m^2 \in \partial\Omega_m \cap \partial \Omega \cap \overline{B}(P_m,R_m)$ and $w_{m+1}^1 \in \partial\Omega_{m+1} \cap \partial \Omega\cap \overline{B}(P_m,R_m)$ for the other indices, 
so that
\begin{equation}
     \int_{S_m} \dist(z, \partial\Omega)^{1-p} \, \d s (z) \leq \frac{2}{2-p} \cdot R_m^{2-p} \le \frac{2}{2-p} |z_1-z_2|^{2-p}. \label{eq:connecting_segments}
\end{equation}
For each $m \in \{1, \dots, j\}$ we can connect $w_m^1$ and $w_m^2$ with a curve $\tilde\alpha_m$ given by the special case that satisfies \eqref{eq:mainthm_usage} with the obvious changes of notation.

The final curve $\gamma$ is obtained by the concatenation of the curves 
\[
S_0, \tilde\alpha_1, S_1,  \dots, \tilde\alpha_{j-1}, S_j, \tilde\alpha_j, S_{j+1}.
\]
By the bound \eqref{eq:omegabound} for the number of $\Omega_i$'s, combined with \eqref{eq:mainthm_usage}, \eqref{eq:connecting_segments}, and \eqref{eq:Cigvalue}, we see that the curve $\gamma$ satisfies
\begin{align*}
 \int_\gamma \dist(z, \partial\Omega)^{1-p} \, \d s (z)
 &\leq \frac{3\pi C_{\text{cig-l}}}{2} \cdot \left( \frac{2}{ 2-p} + C(p,\varepsilon)\|E\|^{2+p+\varepsilon}C_{\text{cig-l}}\right) |z_1-z_2|^{2-p} \\
 &\leq C(p,\varepsilon)\|E\|^{2+p+\varepsilon}C_{\text{cig-l}}^2|z_1-z_2|^{2-p}\\
 & \leq C(p,\varepsilon)\|E\|^{\frac{4p}{2-p}+2+p+\varepsilon}|z_1-z_2|^{2-p}\\
 & \leq C(p,\varepsilon)\|E\|^{\frac{4+4p-p^2}{2-p}+\varepsilon}|z_1-z_2|^{2-p}.
\end{align*}
This concludes the proof of the second step and theorem.
\end{proof}

\section{A Sobolev extension domain with large boundary}\label{sec:largeboundary}

In this section we prove Theorem \ref{thm:example} which states the existence of a domain $\Omega\subset \R^3$ such that $\Omega = h(B(0,1))$ for a homeomorphism $h\colon \R^3 \to \mathbb \R^3$, $\dim_{\mathcal H}(\partial\Omega) = 3$ and $\Omega$ is a $W^{1,p}$-extension domain for all $p \in [1,\infty]$. 

We define first the following Cantor set $C\subset [0,1]^3$:   Choose any  strictly increasing sequence of positive numbers $\{\lambda_i\}$ satisfying 
$$\lim_{i\to \infty}\lambda_i=1/2\,\;\;;\;\;\; \prod_{i\geq 1} 2\lambda_i=0. $$
For instance let $\lambda_i:=(1/2)e^{-1/i}$.
Define inductively a family of closed sets $C_n$, with $C_0=[0,1]^3$, so that each $C_n=\bigcup^{8^n}_{i=1}C_{n,i}$ consists of $8^n$ disjoint cubes of sides $l_n=\lambda_1\cdots \lambda_n$, in such a way that for each $i$,  $C_{n+1}\cap C_{n,i}$ is formed by $8$ cubes equally distributed inside $C_{n,i}$ (denote $l_0=1$). Namely, they are at least at a distance  $e_{n+1}=l_n(1-2\lambda_{n+1})/3$ between themselves and also from the boundary of $C_{n,i}$. 

Letting $C=\bigcap_{n\geq 0} C_n$, we get a Cantor set of Hausdorff dimension $3$ but $\mathcal{L}^3(C)=0$. The
fact that $\dim_{\mathcal H}(C) = 3$ follows from $\lambda_i \nearrow \frac12$, while $\mathcal{L}^3(C)=0$ is implied by $\prod_{i\geq 1} 2\lambda_i=0$.
We refer to \cite[Corollary 4]{MVV1992} for further details.


Let us next define tubes that approach our Cantor set.
The tubes will be removed from the open unit cube $(0,1)^3$ to form $\Omega$ so that $C \subset \partial\Omega$. 
Define first $x_{n,i}$ to be the middle point of the upper face of a cube $C_{n,i}$. Given a cube $C_{n,i}$ we denote by $C_{n-1,j(i)}$ the larger cube that contains it from the previous iteration. Define a decreasing sequence of positive constants $c_n$ by setting $c_0=e_1/8\in (0,1)$ and $c_n=c_{n-1}/64$ for all $n\ge 1$. In particular, we then have $c_n\leq e_n/8$ and $c_n\leq l_n$ for every $n\in\mathbb{N}$.

The tubes will be defined as tubular neighbourhoods of  curves $L_{n,i}$ joining $x_{n,i}$ a point $y_{n,i}\in \partial C_{n-1,j(i)}$ on the top face of $C_{n-1,j(i)}$. We require the curves $L_{n,i}$ and points $y_{n,i}$ to satisfy the following conditions:
\begin{itemize}
    \item[(L1)] $L_{n,i}\subset C_{n-1,j(i)}\setminus \interior(C_{n,i})$.
    \item[(L2)] $|y_{n,i}-x_{n-1,j(i)}|\leq c_{n-1}/2$.
    \item[(L3)] $\dist(L_{n,i},L_{n,j})\ge c_n$ for $i \ne j$.
    \item[(L4)] The curves $L_{n,i}$ consist of segments that are parallel to the coordinate axes and have length at least $c_n$.
    \item[(L5)] $L_{n,i}$ approaches $x_{n,i}$ and $y_{n,i}$ perpendicular to the faces of $C_{n,i}$ and $C_{n-1,j(i)}$, respectively.
\end{itemize} 
Using the curves $L_{n,i}$ we then define the tubes as
\[
T_{n,i}=\left\{x\in C_{n-1,j(i)}\setminus \interior(C_{n,i}):\, \dist (x, L_{n,i})\leq  c_{n}/2\right\}.
\]
Next we define
\[
T_n=\bigcup^{8^n}_{i=1}T_{n,i}
\]
for every $n\geq 1$, and finally our domain as
\[
 \Omega=(0,1)^3\setminus \overline{ \bigcup_{n\geq 1} T_n}.
\]
See Figure \ref{fig:example} for a two-dimensional illustration of the construction.

\begin{figure}
    \centering
    \includegraphics[width=0.6\columnwidth]{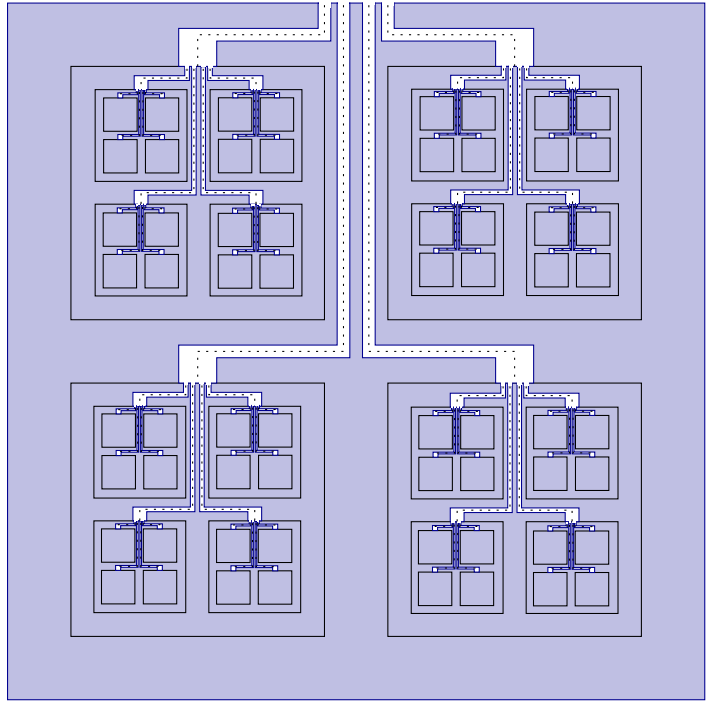}
    \caption{A two-dimensional illustration of the construction of $\Omega$. We start with a unit square and remove disjoint tubes that approach the top sides of Cantor set construction pieces. This two-dimensional version is homeomorphic to the unit ball, but contrary to $n\ge 3$ it is not a Sobolev extension domain.}
    \label{fig:example}
\end{figure}

 By construction, we have
\[
C\subset \partial \bigcup_{n\geq 1} T_n\subset \partial \Omega \subset \partial(0,1)^3 \cup \bigcup_{n\ge 1}\bigcup_{i=1}^{8^n}\partial T_{n,i} \cup C.
\]
Therefore, $\dim_{\mathcal{H}}(\partial\Omega)=3$ and  $\mathcal{L}^3(\partial \Omega)=0$ as required. 
What remains to be checked is that $\Omega$ is homeomorphic to the open unit ball and that it is a Sobolev $W^{1,p}$-extension domain for all $1\leq p\leq \infty$. We prove these separately in the following two lemmata.
 
\begin{lemma}
There exists a homemorphism $h\colon\mathbb{R}^3\to \mathbb{R}^3$ so that $h(B(0,1))=\Omega$.
\end{lemma}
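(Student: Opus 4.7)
The idea is to construct $h$ as a uniform limit of an infinite composition of local "finger push" homeomorphisms, one per tube. Since there is a standard homeomorphism of $\R^3$ taking $B(0,1)$ onto $(0,1)^3$, it suffices to construct a homeomorphism $\psi\colon \R^3\to\R^3$ with $\psi((0,1)^3)=\Omega$.

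Set $\Omega_N=(0,1)^3\setminus\bigcup_{n\le N}\overline{T_n}$, so that the $\Omega_N$ decrease to a $G_\delta$-set that differs from $\Omega$ only by the Cantor set $C\subset\partial\Omega$. The plan is to inductively build homeomorphisms $h_n\colon\R^3\to\R^3$ such that the partial compositions $G_N=h_N\circ\cdots\circ h_1$ satisfy $G_N((0,1)^3)=\Omega_N$, and then set $\psi=\lim_N G_N$. At step $n$, property (L3) forces the tubes $\overline{T_{n,i}}$ to be pairwise separated by distance at least $c_n$, so one may enclose each of them in a pairwise disjoint open neighbourhood $V_{n,i}\subset C_{n-1,j(i)}\cup T_{n-1,j(i)}$ of diameter at most $Cl_{n-1}$. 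By (L2) the attaching point $y_{n,i}$ lies in the end cap of $T_{n-1,j(i)}$ at the top face of $C_{n-1,j(i)}$, so $V_{n,i}$ can also be chosen to contain a small disk on $\partial T_{n-1,j(i)}$ around $y_{n,i}$. By (L1), (L4), and (L5) the tube $\overline{T_{n,i}}$ is a polygonal tubular neighbourhood of an arc emerging perpendicularly from this disk, which a piecewise linear isotopy straightens to a segment; in the straightened picture a standard radial finger push supported in $V_{n,i}$ deposits the removed cylindrical neighbourhood onto the disk. This yields $h_{n,i}\colon\R^3\to\R^3$ sending $\Omega_{n-1}\cap V_{n,i}$ onto $\Omega_n\cap V_{n,i}$; we take $h_n$ to equal $h_{n,i}$ on each $V_{n,i}$ and the identity elsewhere.

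Convergence to a continuous map is then routine: the bounds $|h_n(x)-x|\le Cl_{n-1}$ and $|h_n^{-1}(x)-x|\le Cl_{n-1}$, together with $l_n\le (1/2)^n$ for large $n$, make $\sum_n l_n$ summable, and a telescoping sum shows both $G_N$ and $G_N^{-1}$ are uniformly Cauchy on $\R^3$ with limits $\psi$ and $\tilde\psi$. The identity $G_N\circ G_N^{-1}=\mathrm{id}$ passes to the limit, giving $\psi\circ\tilde\psi=\tilde\psi\circ\psi=\mathrm{id}$, so $\psi$ is a homeomorphism. For $p\in(0,1)^3$ the orbit $G_n(p)$ remains in $\Omega_n$ and is eventually stationary once it is outside every later support $V_m$ (which it must be, since the supports shrink toward $C$), so $\psi(p)\in(0,1)^3\setminus\bigcup_n\overline{T_n}$; combined with the fact that the open set $\psi((0,1)^3)$ cannot contain the nowhere dense set $C$ (the preimage $\psi^{-1}(C)$ will be a Cantor set on $\partial(0,1)^3$), this gives $\psi(p)\in\Omega$, and the reverse inclusion follows symmetrically from the analysis of $\tilde\psi$.

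The main obstacle is the uniform Cauchy property of $G_N^{-1}$: naively, the small displacement $|h_{n+1}^{-1}(q)-q|\le Cl_n$ could be amplified under $G_n^{-1}$, because the support $V_{n+1,i}$ meets $V_{n,j(i)}$ along the attaching disk around $y_{n+1,i}$. We resolve this by arranging each finger push $h_n$ to equal the identity on the end cap of $T_{n-1,j(i)}$, in particular on the disks where the next level of tubes will attach, so that any interaction between $h_{n+1}^{-1}$ and $h_n^{-1}$ occurs on a region where $h_n^{-1}$ is the identity. With this compatibility the iterated displacement bound for $G_N^{-1}$ reduces to the same telescoping estimate as for $G_N$, and the construction is complete.
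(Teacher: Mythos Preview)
Your overall strategy---build $\psi\colon\R^3\to\R^3$ with $\psi((0,1)^3)=\Omega$ as a limit of finger pushes $G_N=h_N\circ\cdots\circ h_1$ with $G_N((0,1)^3)=\Omega_N$---has a structural defect that cannot be repaired by the end-cap compatibility you propose. The Cantor set $C$ lies in the \emph{open} cube $(0,1)^3$ but \emph{not} in $\Omega$. Your neighbourhoods $V_{n,i}$ are thin neighbourhoods of the tubes $T_{n,i}\subset C_{n-1}\setminus\interior(C_n)$, and since the Cantor pieces $C_{n+1,k}$ sit at distance at least $e_{n+1}$ from $\partial C_n$, any reasonable choice of $V_{n,i}$ avoids $C$. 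Hence every $h_n$ fixes $C$ pointwise, every $G_N$ fixes $C$, and the uniform limit $\psi$ fixes $C$. But then $\psi(C)=C\subset (0,1)^3\setminus\Omega$, which already contradicts $\psi((0,1)^3)=\Omega$. Put differently, $\bigcap_N\Omega_N=(0,1)^3\setminus\bigcup_nT_n=\Omega\cup C$, not $\Omega$; your limit map lands you in $\Omega\cup C$, and the sentence ``the open set $\psi((0,1)^3)$ cannot contain the nowhere dense set $C$'' is simply false (open sets routinely contain nowhere dense subsets). In fact your $\psi$ cannot be injective: the points of $C$ stay fixed, while a Cantor set of points on $\partial(0,1)^3$ (those that get dragged down every successive tube) also limit onto $C$.

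The paper avoids this by working in the opposite direction: it constructs $h$ with $h(\Omega)=(0,1)^3$ as a limit $h=\lim h_1\circ\cdots\circ h_n$, where each $h_n$ \emph{flattens} the level-$n$ tubes and is supported in a set $U_n$ that is a full neighbourhood of $C\cup\bigcup_{m\ge n}T_m$. Thus the supports shrink \emph{to} $C$ rather than avoiding it, and the Cantor set is handled by imposing a non-expansion condition (H2) on $h_n$ restricted to $U_{n+1}$, which forces $\diam(h_1\circ\cdots\circ h_n(\overline{U_{n,i_n}}))\to 0$ and makes $h|_C$ a well-defined bijection onto another Cantor set. Your end-cap fix addresses a different (and secondary) issue and in any case is inconsistent: $h_n$ is the push that \emph{creates} the tube $T_{n,i}$ attaching at $y_{n,i}$ on the end cap of $T_{n-1,j(i)}$, so it cannot be the identity on that cap.
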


\begin{proof}
  It is enough to prove the existence of a homeomorphism $h\colon\mathbb{R}^3\to \mathbb{R}^3$ so that $h(\Omega)=(0,1)^3$ since the unit cube and the unit ball are homeomorphic under a global homeomorphism.
 
 Define the following decreasing sequence of open sets $\{U_n\}_{n\geq 1}$.
 \[
 U_n=\left\{x\in\mathbb{R}^3:\, \dist \left(x, \overline{\bigcup_{i\geq n} T_i}\right)< c_n\right\}.
 \]
Observe that each $U_n$ has exactly $8^n$ disjoint connected components. Let us label these components as $U_{n,i} \supset T_{n,i}$.
Notice also that
\[
\bigcap_{n\geq 1 } U_n= C.
\]
We now define a sequence of homeomorphisms $h_n=h_{n,1}\circ h_{n,2}\circ\cdots \circ h_{n,8^n}$ so that each $h_{n,i}$ satisfies the following conditions:


\begin{enumerate}
\item[(H1)] For the supports we have
   \[
\textrm{spt}(h_{n,i}):=\{x:\, h_{n,i}(x)\neq x\} \subset U_n.
\]
In particular, for a given $n$ they are pairwise disjoint for different $i$.
\item[(H2)] For every two points $x,y\in U_{n+1}\cap U_{n,i}$ we have    \[
     |h_{n,i}(y)-h_{n,i}(x)|\le |x-y|.
   \]
\item[(H3)] The map $h_{n,i}$ flattens the boundary of the tube $T_{n,i}$ to the top face of $C_{n-1,j(i)}$:
\[
h_{n,i}\left((\partial C_{n-1}\setminus \partial T_{n,i})\cup \overline{(\partial T_{n,i}\setminus\partial C_{n-1})} \right) = \partial C_{n-1}.
\]
\end{enumerate}
 Using the maps $h_n$ we then define
 \[
 h=\lim_{n\to\infty}h_1\circ\cdots \circ h_n.
 \]
 
 Let us next check that $h$ is well defined. On one hand, if $x \notin U_n$ for some $n$, then $h(x) = h_1\circ \cdots \circ h_n(x)$, since by (H1) we have $h_k(x) = x$ when $k > n$. On the other hand, if $x \in \bigcap_{n}U_n$, by the pairwise disjointness of $U_{n,i}$ there exists a unique sequence $(i_n)$ so that $x \in U_{n,i_n}$. Since $\diam(U_{n,i_n}) \to 0$ as $n \to \infty$, by (H2) also
 \[
\diam(h(\overline{U_{n,i_n}})) = \diam(h_1 \circ \cdots \circ h_n(\overline{U_{n,i_n}})) \to 0
 \]
 as $n \to \infty$. Hence $\{h(x)\} = \bigcap_n h(\overline{U_{n,i_n}})$.
 
 Notice that $h$ maps the Cantor set $C$ bijectively to the Cantor set $\bigcap_{n}h(\overline{U_{n}})$. Hence, being a bijection outside the Cantor sets, $h$ is a bijection $\mathbb R^n \to \mathbb R^n$.
  Hence, in order to see that $h$ is a homeomorhism, by domain invariance it is enough to check that $h$ is continuous. This follows by the uniform continuity of the sequence $(h_1\circ\cdots \circ h_n)_n$ of homeomorphisms given by (H1) and (H2). Thus, as the limit of uniformly continuous mappings, $h$ is continuous.
  
  Let us finally observe that $h(\Omega)=(0,1)^3$. This is due to the condition (H3) implying 
  $h(\partial T_n) \subset \partial (0,1)^3$ for all $n$ and hence, by continuity of $h$, we have $h(\partial\Omega) = \partial(0,1)^3$.
  \end{proof}

\begin{lemma}
$\Omega$ is a  Sobolev $W^{1,p}$-extension domain for all $1\leq p\leq\infty$.
\end{lemma}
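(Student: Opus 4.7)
The plan is to reduce to filling in the removed tubes: since $(0,1)^3$ is a Lipschitz domain and hence a $W^{1,p}$-extension domain for every $p \in [1,\infty]$, it suffices to produce a bounded linear extension $E \colon W^{1,p}(\Omega) \to W^{1,p}((0,1)^3)$ and then compose with the standard extension for the cube.

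I would construct $E$ tube by tube. For each $T_{n,i}$, attach a Lipschitz collar $V_{n,i} \subset \Omega$ of comparable diameter to $T_{n,i}$, chosen as (a suitable subset of) the tubular neighborhood of $L_{n,i}$ of radius $5c_n/8$ minus $T_{n,i}$, intersected with $\Omega$. The scale drop $c_n = c_{n-1}/64$ together with the geometric conditions (L2)--(L5) ensures that the family $\{V_{n,i}\}$ has bounded overlap and that each $V_{n,i}$, after rescaling by $c_n^{-1}$, is a Lipschitz domain whose Lipschitz character is bounded by universal data (essentially the geometry of axis-parallel right-angle corners). Therefore, by the standard Stein extension theorem for Lipschitz domains (and McShane extension for $p=\infty$), combined with the scale-invariance of the $W^{1,p}$ extension inequality under dilations, one obtains local extension operators
\[
E_{n,i} \colon W^{1,p}(V_{n,i}) \to W^{1,p}(T'_{n,i}),
\]
with $\|E_{n,i}\| \le C_0 = C_0(p)$ independent of $n$ and $i$, where $T'_{n,i}$ is the fattened tube containing $T_{n,i} \cup V_{n,i}$. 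Define
\[
(Eu)(x) := \begin{cases} u(x), & x \in \Omega, \\ E_{n,i}(u|_{V_{n,i}})(x), & x \in T_{n,i}. \end{cases}
\]
Since the Cantor set $C \subset \partial\Omega$ has Lebesgue measure zero, $Eu$ is defined almost everywhere on $(0,1)^3$; the traces match across each $\partial T_{n,i}$, so $Eu \in W^{1,p}((0,1)^3)$, and the bounded-overlap estimate yields
\[
\|Eu\|_{W^{1,p}((0,1)^3)}^p \le \|u\|_{W^{1,p}(\Omega)}^p + C_0^p \sum_{n,i}\|u\|_{W^{1,p}(V_{n,i})}^p \le C(p)\,\|u\|_{W^{1,p}(\Omega)}^p,
\]
with the analogous bound $\|Eu\|_{W^{1,\infty}} \le C_0\,\|u\|_{W^{1,\infty}}$ when $p=\infty$.

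I expect the main obstacle to be the purely geometric verification in the second paragraph: that the rescaled collars $c_n^{-1}V_{n,i}$ have uniformly bounded Lipschitz character and that the family $\{V_{n,i}\}$ has bounded overlap. Both should follow from the construction, but they require careful bookkeeping at the tube mouths $x_{n,i}$ and $y_{n,i}$, where tubes of consecutive levels come into close contact; the small scale ratio $c_n/c_{n-1} = 1/64$ is chosen precisely so that these local interactions can be controlled uniformly in $n$.
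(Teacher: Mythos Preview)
Your overall strategy---fill each tube from a surrounding collar in $\Omega$, then extend from the cube---is the same as the paper's, but there is a genuine gap at the tube junctions that a single-pass scheme cannot repair. By (L2) and (L5) the tubes form a connected tree: the end disk of $T_{n,i}$ at $y_{n,i}$ lies inside the end disk of $T_{n-1,j(i)}$ on the face $\partial C_{n-1,j(i)}$, and at the other end $T_{n,i}$ meets the eight tubes $T_{n+1,k}$. Your collar $V_{n,i}\subset\Omega$ therefore cannot wrap around the end at $y_{n,i}$ (the region on the other side of that face belongs to $T_{n-1,j(i)}$, not to $\Omega$); near that mouth $V_{n,i}$ is only a one-sided annular half-cylinder. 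Consequently $E_{n,i}(u|_{V_{n,i}})$ and $E_{n-1,j(i)}(u|_{V_{n-1,j(i)}})$ are manufactured from data that do not communicate, and nothing forces their traces to agree on the common disk; the glued function need not belong to $W^{1,p}((0,1)^3)$. The ratio $c_n/c_{n-1}=1/64$ keeps the local pictures uniform in $n$ but does nothing to create trace compatibility across levels---this is a structural obstruction, not a matter of bookkeeping Lipschitz constants.

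The paper's fix is to subdivide each $T_{n,i}$ into an \emph{even} number of short pieces $T_{n,i}^j$ and extend in two passes. In the first pass one fills the odd-indexed pieces from annular-slab neighbourhoods $U_{n,i}^j\subset\interior(C_{n-1,j(i)})$ that deliberately do not cross the ends of the piece; the odd family is pairwise disjoint by (L3). In the second pass one fills the even-indexed pieces from full $c_{n+1}$-neighbourhoods $U_{n,i}^j=\{x:\dist(x,T_{n,i}^j)<c_{n+1}\}$. Because the first piece of every tube is odd and the last is even (properties (P1) and (P4)), these second-pass neighbourhoods reach across each face and pick up the already-filled odd pieces of the adjacent level, which is exactly the missing data needed for trace matching at the junctions. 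Two smaller points you should also address: the inhomogeneous $W^{1,p}$-norm is not dilation invariant, which is why the paper works with $L^{1,p}$ throughout and converts back only at the end; and $|C|=0$ alone does not make $C$ removable for Sobolev functions---the paper uses that the projections of $C$ to the coordinate planes have zero two-dimensional measure.
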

   
\begin{proof}
Since $\Omega$ is a bounded domain it is enough to prove that it is an $L^{1,p}$-extension domain for the homogeneous norm  since for bounded domains these are the same (see \cite{K1990,HK1991}). We will provide the extension in two steps. For this purpose we divide each of the tubes $T_{n,i}$ into an even number of shorter pieces of tubes to each of which we can extend the Sobolev function from its small neighbourhood.

Recall that our tubes are of the form
\[
T_{n,i}=\left\{x\in C_{n-1,j(i)}\setminus \interior (C_{n,i}):\, \dist (x, L_{n,i})\leq  c_{n}/2\right\}.
\]
We now split each curve  $L_{n,i}$ into finitely many parts $J(i,n)$   
$$L_{n,i}=\bigcup^{J(i,n)}_{j=1}L^{j}_{n,i} $$ so that the following four properties hold:
\begin{itemize}
    \item[(P1)] $J(n,i)$ is an even number.
    \item[(P2)] $\ell (L^{j}_{n,i})\in [2c_n,6c_n]$ for every $j=1,\dots, J(n,i)$.
    \item[(P3)] $L^{j}_{n,i}\cap L^{j+1}_{n,i}$ is just one point for every $j=1,\dots, J(n,i)-1$.
    \item[(P4)] $L^{1}_{n,i}$ touches $\partial C_{n-1,j(i)}$ and $  L^{J(n,i)}_{n,i}$ touches $C_{n,i}$.
\end{itemize}
The property (P1) together with (P2) can be satisfied because by the definition of $c_n$, we have $\ell(L_{n,i}) \ge 8c_n$. The condition (P3) and (P4) just say that the curves follow one after another in the desired direction. Using the shorter curves $L_{n,i}^j$ we then write $T_{n,i}=\bigcup^{J(n,i)}_{j=1} T^{j}_{n,i}$, where each shorter tube $T^{j}_{n,i}$ is the closure of the set of points of $T_{n,i}$ which are closer to $L^{j}_{n,i}$, for every $j$.

We define the following open sets from which we extend a given Sobolev function to the corresponding tube. If $j$ is odd, we set
\[
U^{j}_{n,i}=\left\{x\in \interior(C_{n-1,j(i)}):\, \dist (x,L^{j}_{n,i})<2c_n,\text{and}\;x\, \text{is closer to} \, L^{j}_{n,i} \,\text{than to other }\, L^{j'}_{n,i} \right\},
\]
and if $j$ is even, we set
\[
U^{j}_{n,i}=\left \{x\,:\, \dist (x,T^{j}_{n,i})< c_{n+1}\right \}.
\]

By the assumptions (L4) and (P2), there exists a constant $L$ so that for every $n,i$ and $j$ there exists a map $f_{n,i}^j\colon \mathbb R^3 \to \mathbb R^3$ which is a composition of an  $L$-biLipschitz map and a similitude so that 
\[
f_{n,i}^j(U_{n,i}^j) = U_{\text{odd}} \quad \text{and}\quad
f_{n,i}^j(T_{n,i}^j) = T,
\]
for $j$ odd, and 
\[
f_{n,i}^j(U_{n,i}^j) = U_{\text{even}} \quad \text{and}\quad
f_{n,i}^j(T_{n,i}^j) = T,
\]
for $j$ even, where
\[
T = \left\{x=(x_1,x_2,x_3)\,:\, x_1\in [0,1]\,,\, \sqrt{x_2^2+x_3^2} \le 1\right\},
\]
\[
U_{\text{odd}} = \left\{x=(x_1,x_2,x_3)\,:\, x_1\in (0,1)\,,\, \sqrt{x_2^2+x_3^2} < 2\right\},
\]
and
\[
U_{\text{even}} = \left\{x\,:\, \dist(x,T)< 1\right\}.
\]
Now, for instance by Jones' theorem \cite{jo1981}, there exists an extension operator 
\[
E_{\text{odd}} \colon L^{1,p}(U_{\text{odd}}\setminus T) \to L^{1,p}(U_{\text{odd}}),
\]
since $U_{\text{odd}}$ is an $(\varepsilon,\delta)$-domain. Consequently, the norms of the operators for odd $j$
\[
E_{n,i}^j \colon L^{1,p}(U_{n,i}^j \setminus T_{n,i}^j) \to L^{1,p}(U_{n,i}^j) \colon u \mapsto (E_{\text{odd}}(u\circ (f_{n,i}^j)^{-1})) \circ f_{n,i}^j
\]
are uniformly bounded. (Notice that the $L$-biLipschitz part of $f_{n,i}^j$ changes the norms only by a constant, whereas the similitude parts cancel out their effect on the norm since we use the homogenous norm.)

Similarly, for $j$ even, there exists an extension operator
\[
E_{\text{even}} \colon L^{1,p}(U_{\text{even}}\setminus T) \to L^{1,p}(U_{\text{even}}),
\]
and so each of the operators for even $j$
\[
E_{n,i}^j \colon L^{1,p}(U_{n,i}^j  \setminus T_{n,i}^j) \to L^{1,p}(U_{n,i}^j) \colon u \mapsto (E_{\text{even}}(u\circ (f_{n,i}^j)^{-1})) \circ f_{n,i}^j
\]
are also uniformly bounded.

\begin{figure}
    \centering
    \includegraphics[width=0.95\columnwidth]{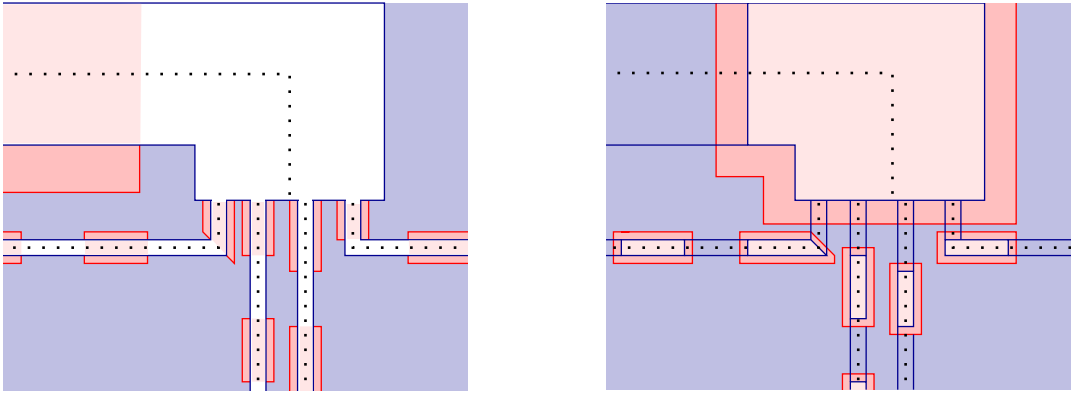}
    \caption{A two-dimensional illustration of the extension operator $E^1$ (on the left) that fills in every second piece of the tubes, and the extension operator $E^2$ (on the right) that fills in the rest of the pieces.}
    \label{fig:extensions}
\end{figure}

Next we see from the assumption (L3) that the collection $\{U_{n,i}^j\}_{j \text{ odd}}$ is pairwise disjoint. Hence, the extension operator 
     \[
     E^{1}\colon  L^{1,p}(\Omega)\to L^{1,p}\Bigg( (0,1)^3\setminus \overline{\bigcup_{\substack{i,n\\ j\;\text{even}}} T^{j}_{n,i}}\Bigg)
     \]
     defined by 
    \[
    E^1u(x)=\begin{cases}
    E^{j}_{n,i}(u|_{U^{j}_{n,i}\setminus T^{j}_{n,i}}) (x), & \text{if } x\in  U^{j}_{n,i}\;\text{with}\; j\;\text{odd},\\
    u(x), & \text{otherwise},
    \end{cases}
    \]
    is bounded. Then we use again the assumption (L3) to notice that also the collection $\{U_{n,i}^j\}_{j \text{ even}}$ is pairwise disjoint. Therefore, also the extension operator
\[
E^{2}\colon L^{1,p}\Bigg( (0,1)^3\setminus \overline{\bigcup_{\substack{i,n\\ j\;\text{even}}} T^{j}_{n,i}}\Bigg)\to L^{1,p}((0,1)^3\setminus C)
\]

defined by 
\[
E^2u(x)=\begin{cases}
    E^{j}_{n,i}(u|_{U^{j}_{n,i}\setminus T^{j}_{n,i}}) (x),& \text{if } x\in  U^{j}_{n,i}\;\text{with}\; j\; \text{even},\\
    u(x), & \text{otherwise},
    \end{cases}
    \]
    is bounded. See Figure \ref{fig:extensions} for an illustration of the extension operators $E^1$ and $E^2$.

Finally, we observe that $(0,1)^3$ is an extension domain (with some extension operator $E^3$) and the set $C$ is removable for Sobolev functions since its projection to any coordinate plane has zero two-dimensional measure. Thus, $E^3 \circ E^2 \circ E^1 \colon L^{1,p}(\Omega) \to L^{1,p}(\mathbb R^3)$ is a bounded extension operator.
\end{proof}


\begin{thebibliography}{10}
\bibitem{ACMM2001} L. Ambrosio, V. Caselles, S. Masnou and J.-M. Morel, \emph{Connected components of sets of finite perimeter and applications to image processing}, J. Eur. Math. Soc. \textbf{3}, 39--92 (2001).

\bibitem{AFP2000} L. Ambrosio, N. Fusco, and D. Pallara. \emph{Functions of bounded variation and free discontinuity problems},
Oxford Mathematical Monographs. The Clarendon Press, Oxford University Press, New York, 2000

\bibitem{BHS2002}
B. Bojarski, P. Haj{\l}asz, P. Strzelecki, \emph{Improved $C^{k,\lambda}$ approximation of higher order Sobolev functions in norm and capacity}.
Indiana Univ. Mat. J.
\textbf{51} (2002), 507--540.  

\bibitem{BM1967} Yu. D. Burago and V. G. Maz'ya, \emph{Certain questions of potential theory
and function theory for regions with irregular boundaries} (Russian),
Zap. Nau\u{c}n. Sem. Leningrad. Otdel. Mat. Inst. Steklov (LOMI) \textbf{3}
(1967) 152pp.

\bibitem{C1961} A. P. Calderon, \emph{Lebesgue spaces of differentiable functions and distributions}, Proc. Sympos. Pure Math.
IV (1961) 33-49.

\bibitem{EG2015}
L. Cr. Evans and R. F. Gariepy, \emph{ Measure theory and fine properties of functions. Revised edition}, Textbooks in Mathematics. CRC Press, Boca Raton, FL (2015), xiv+299 pp.
 	\bibitem{Falconer86}
 	K. J. Falconer,
 	\emph{The geometry of fractal sets}.
 	Cambridge Tracts in Mathematics, vol. 85. Cambridge University Press, Cambridge, 1986.

\bibitem{F1969} H. Federer, \emph{Geometric Measure Theory}. Berlin, Heidelberg, New York: Springer 1969.

\bibitem{GBR2021} M. Garc\'ia-Bravo and T. Rajala,
\emph{Strong $BV$-extension and $W^{1,1}$-extension domains}, preprint.

\bibitem{GBRT2021} M. Garc\'ia-Bravo, T. Rajala and J. Takanen, \emph{Two-sided boundary points of Sobolev-extension domains on Euclidean spaces}, preprint. 
\bibitem{golavo1979} V. M. Gol'dshte{\u\i}n, T. G. Latfullin and S. K. Vodop'yanov,
 \emph{A criterion for the extension of functions of the class $L_2^1$ from unbounded plain domains 
(Russian)},
 Sibirsk. Mat. Zh. \textbf{20} (1979), 416--419.
 
 \bibitem{gore1990} V. M. Gol'dshte{\u\i}n and Yu. G. Reshetnyak,
 \emph{Quasiconformal mappings and {S}obolev spaces}, 
 Mathematics and its Applications (Soviet Series),
 \textbf{54} (1990), Kluwer Academic Publishers Group, Dordrecht.

\bibitem{govo1981} V. M. Gol'dshte{\u\i}n and S. K. Vodop'yanov,
 \emph{Prolongement de fonctions diff\'erentiables hors de domaines planes (French)},
 C. R. Acad. Sci. Paris Ser. I Math. \textbf{293} (1981), 581--584.

\bibitem{HK1998}
P. Haj{\l}asz and J. Kinnunen, \emph{Holder quasicontinuity of Sobolev functions on metric spaces}. Rev.
Mat. Iberoamericana, \textbf{14} (1998), 601--622. 

\bibitem{HKT2008}
P. Haj{\l}asz, P. Koskela, and H. Tuominen,
\emph{Sobolev embeddings, extensions and measure density condition},
 J. Funct. Anal. \textbf{254} (2008), no. 5, 1217--1234.

\bibitem{HH2008} H. Hakobyan, and D. Herron,
 \emph{Euclidean Quasiconvexity}, Ann. Acad. Sci. Fenn. Math. \textbf{33} (2008), 205--230.

\bibitem{HK1991}
D. Herron and P. Koskela, Uniform, \emph{Sobolev extension and quasiconformal circle domains,} J. Anal. Math.
\textbf{57} (1991), 172--202.

\bibitem{jo1981} P. W. Jones, \emph{Quasiconformal mappings and extendability of Sobolev functions},
 Acta Math. \textbf{47} (1981), 71--88.

\bibitem{K1990} P. Koskela, \emph{Capacity extension domains}, Ann. Acad. Sci. Fenn. Ser. A I Math. Dissertationes no. \textbf{73} (1990), 42 pp.

\bibitem{K1999} P. Koskela, \emph{Removable sets for Sobolev spaces}, Ark. Mat. \textbf{37} (1999) 291--304.

\bibitem{KMS2010} P. Koskela, M. Miranda Jr. and N. Shanmugalingam, \emph{Geometric properties of planar BV extension domains}, in: Around the Research of Prof. Maz'ya I, in: International Mathematical Series, 2010, pp.255--272, Function Spaces; Topics (Springer collection).

\bibitem{KRZ2015}
P. Koskela, T. Rajala, and Y. Zhang, \emph{A geometric characterization of planar Sobolev extension domains},
preprint.

\bibitem{KRZ}  P. Koskela, T. Rajala and Y. Zhang, \emph{Planar $W^{1,\,1}$-extension domains}, preprint.

\bibitem{LLW2020}
P. Lahti, X. Li and Z. Wang, \emph{Traces of Newton-Sobolev, Haj{\l}asz-Sobolev, and BV
functions on metric spaces}, Ann. Sc. Norm. Super. Pisa Cl. Sci. (5), \textbf{22} (2021), 1353--1383.

\bibitem{LRT2020} D. Lu\v{c}i\'c, T. Rajala, and J. Takanen, \emph{Dimension estimates for the boundary of planar Sobolev extension domains},  Advances in Calculus of Variations (to appear) 16 pp.

\bibitem{MS1978}
O. Martio and J. Sarvas, \emph{Injectivity theorems in plane and space},
Ann. Acad. Sci. Fenn. Ser. A I Math. \textbf{4} (1978), 383--401. 

\bibitem{mazya}
Vl. Maz'ya, \emph{Sobolev spaces with applications to elliptic partial differential equations}. Second, revised and augmented edition. Grundlehren der Mathematischen Wissenschaften, 342. Springer, Heidelberg, 2011. xxviii+866 pp.

\bibitem{MVV1992}
C. Ganesa Moorthy, R. Vijaya,  and P. Venkatachalapathy, \emph{Hausdorff Dimension of Cantor-like sets}, Kyungpook
Math. J. Vol. \textbf{32}, No. 2 (1992) 197--202.

\bibitem{NV1991}
R. N\"akki and J. V\"ais\"al\"a, \textit{John disks},
Exposition. Math. \textbf{9} (1991), 3--43.


\bibitem{Shvartsman}
P. Shvartsman, \emph{On Sobolev extension domains in $\R^n$}, J. Funct. Anal. 258 (2010), no. \textbf{7}, 2205--2245.

\bibitem{ShvartsmanZobin}
P. Shvartsman and N. Zobin, \emph{On planar Sobolev $L_m^p$-extension domains}, Adv. Math. 287 (2016), 237--346,

\bibitem{stein}
 E. M. Stein, \emph{Singular Integrals and Differentiability Properties of Functions}, Princeton Mathematical Series, vol. 30, Princeton University Press, Princeton, N.J., 1970.
 
\bibitem{Jyrki}
J. Takanen, \emph{Dimension estimate for two-sided points of planar Sobolev extension domains}, preprint.

\end{thebibliography}
\end{document}